\newtheorem{theorem}{Theorem} %[section]
\newtheorem*{theorem*}{Theorem}
\newtheorem{lemma}[theorem]{Lemma}
\newtheorem{definition}[theorem]{Definition}
\newtheorem{proposition}[theorem]{Proposition}
\newtheorem{conjecture}[theorem]{Conjecture}
\newtheorem{corollary}[theorem]{Corollary}
\theoremstyle{remark}
\newtheorem{rmk}[theorem]{Remark}
\newcommand{\pp}{\mathbb{P}}
\newcommand{\rr}{\mathbb{R}}
\newcommand{\nn}{\mathbb{N}}
\newcommand{\eq}{\begin{equation}}
\newcommand{\en}{\end{equation}}
\newcommand{\ev}{\mathbb{E}}
\newcommand{\eps}{\varepsilon}
\newcommand{\X}{\mathcal X}
\newcommand{\Y}{\mathcal Y}
\newcommand{\R}{\mathcal R}
\newcommand{\WW}{\overline{{\mathcal W}^N}}
\newcommand{\GG}{\overline{\mathcal{G}^N}}
\newcommand{\F}{\mathbb F}
\numberwithin{equation}{section} \numberwithin{theorem}{section}
\title{Interacting particle systems at the edge of multilevel Dyson Brownian motions}
\author{Vadim Gorin}
\address{Department of Mathematics, Massachusetts Institute of Technology, MA, USA and Institute for Information Transmission Problems of Russian Academy of Sciences,  Russia}
\email{vadicgor@gmail.com}
\author{Mykhaylo Shkolnikov}
\address{Department of Mathematics, Princeton University, Princeton, NJ, USA}
\email{mshkolni@gmail.com}
\begin{document}

\begin{abstract}
We study the joint asymptotic behavior of spacings between particles at the edge of multilevel
Dyson Brownian motions, when the number of levels tends to infinity. Despite the global
interactions between particles in multilevel Dyson Brownian motions, we observe a decoupling
phenomenon in the limit: the global interactions become negligible and only the local interactions
remain. The resulting limiting objects are interacting particle systems which can be described as
Brownian versions of certain totally asymmetric exclusion processes. % with long--range interactions.
 This is the first appearance of a particle system with local interactions in the context of
general $\beta$ random matrix models.
\end{abstract}

\maketitle

%\tableofcontents

%%%%%%%%%%%%%%%%%%%%%%
\section{Introduction}
%%%%%%%%%%%%%%%%%%%%%%

The main theme of this article is a connection between general $\beta$ random matrix models and
interacting particle systems with local interactions. When $\beta=2$, that is in the case of
Hermitian random matrices, a first rigorous connection of this kind was established fifteen years
ago in \cite{J-TASEP}. In that paper Johansson proved that the large time fluctuations for the
current of the totally asymmetric simple exclusion process (TASEP) started from the step initial
condition are governed by the Tracy--Widom distribution. This distribution had appeared previously
as the limit of the fluctuations of the largest eigenvalue of random Hermitian matrices from the
Gaussian Unitary Ensemble (GUE) when one lets the size of the matrix tend to infinity.

\medskip

A corresponding connection between GUE matrices of \emph{finite} size and TASEP--like processes
has been established later by means of the combinatorial RSK correspondence in \cite{Bar},
\cite{GTW}, \cite{OC1} and \cite{OC2}, by a stochastic coupling procedure for Markov chains due to
Diaconis and Fill \cite{DF} in  \cite{BF}, \cite{GS-TASEP}, \cite{Ferrari}, and by other methods
in \cite{Warren}, \cite{Nordenstam}. The results of the above articles lead to statements of the
following flavor. One starts from a discrete space stochastic dynamics on interlacing arrays with
$N$ levels and $N(N+1)/2$ particles (see Figure \ref{Figure_array} for an example of such an array
with $N=3$). The restriction of that dynamics to the $N$ rightmost (or leftmost) particles on the
$N$ levels turns out to be an interacting particle system with local interactions (typically a
version of the TASEP). The latter converges in the diffusive scaling limit to a Brownian particle
system with local interactions (which is therefore typically referred to as Brownian TASEP). On
the other hand, the diffusive scaling limit of the $N$ particles on the top level is typically
given by the Dyson Brownian Motion, which is a natural evolution of the $N$ eigenvalues of an
$N\times N$ GUE matrix.

\medskip

All of the above results are restricted to the case of $\beta=2$. When $\beta=1$, that is in the
case of real symmetric random matrices, an \emph{asymptotic} connection to TASEP of a similar type
as above is known (see \cite{S-TASEP}, \cite{BFPS}, \cite{FSW} and also \cite{PrahSpohn},
\cite{BR}), but this case is much less understood conceptually. In addition, while many of the
studied particle systems have far reaching generalizations (see \cite{BigMac}, \cite{BG_lect},
\cite{BP-RSK}, \cite{BP_lect} and \cite{GS}), the interactions between particles are non-local for
the range of parameters corresponding to general $\beta$ random matrix models. Naively, one might
conclude that there are no connections between general $\beta$ random matrix models and
interacting particle systems with local interactions. In the present article we do find such a
connection, proving this naive conclusion to be spurious.

\begin{figure}[h]
\begin{center}
 {\scalebox{0.67}{\includegraphics{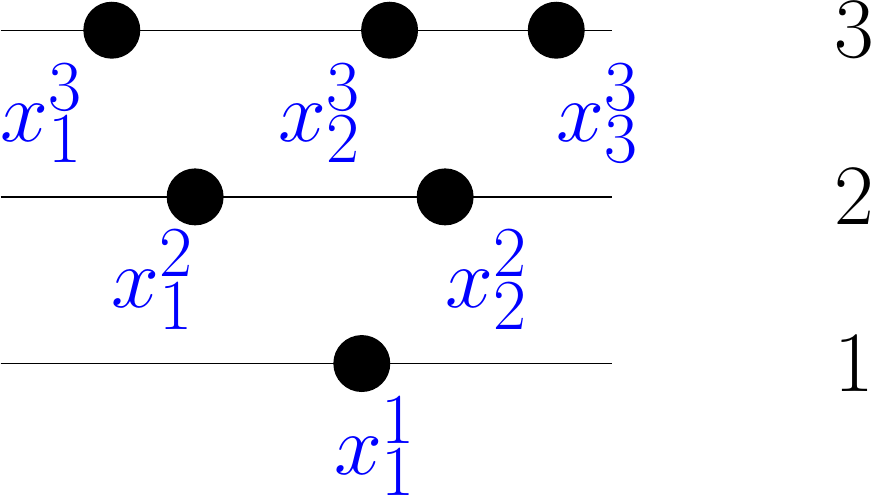}}}
\end{center}
\caption{Array of interlacing particles with $N=3$ levels and $N(N+1)/2=6$ particles.}
\label{Figure_array}
\end{figure}

A key object in our results is the $N(N+1)/2$--dimensional diffusion process $X(t;N)$, $t\ge0$
introduced in \cite{GS}. The state space of that process is the Gelfand--Tsetlin cone $\GG$
defined via \eq\label{GTcone} \GG=\left\{x=(x^k_i)_{1\leq i\leq k\leq
N}\in\rr^{N(N+1)/2}:\;x^{k+1}_i\leq x^k_i\leq x^{k+1}_{i+1},\,1\le i\le k\le N-1\right\}. \en We
refer to the $x^k_i\,$'s as positions of particles and visualize them as in Figure
\ref{Figure_array}. Although the process $X(t;N)$ can be defined for any $\beta>0$, we restrict
ourselves to the case $\beta\ge 4$ throughout the paper due to the technical difficulties arising
for $0<\beta<4$. The case $\beta\ge4$  is distinguished by the property that the particles (almost
surely) never collide with each other at all times $t>0$, and then the process $X(t;N)$ solves the
systems of SDEs
\begin{equation} \label{eq_betaWarren_Intro} \mathrm{d}X^k_i(t;N)=\sum_{j=1}^{k-1} \frac{(\beta/2-1)\,
\mathrm{d}t}{X^k_i(t;N)-X^{k-1}_j(t;N)} -\sum_{j\neq i} \frac{(\beta/2-1)\,
\mathrm{d}t}{X^k_i(t;N)-X^k_j(t;N)}+\mathrm{d}W^k_i(t),\quad 1\leq i\leq k\leq N
\end{equation}
where $W^k_i$, $1\leq i\leq k\leq N$ are independent standard Brownian motions. Here we choose the
initial condition $X(0;N)$ to be the zero vector. The definition of $X(t;N)$ in \cite{GS} was
motivated, in particular, by the following remarkable properties: The evolution of the
$N$-dimensional vector $\big(X_1^N(t;N),\dots,X_N^N(t;N)\big)$ is given by the celebrated Dyson
Brownian motion (see e.g. \cite{Mehta}, \cite{AGZ} and \cite{For}). On the other hand, the
distribution of $X(t;N)$ at a fixed time $t$ is given by the (appropriately scaled)
$\beta$-Hermite corners process, which is an extension to general $\beta>0$ of the process of
eigenvalues of all $k\times k$ top left corners ($k$ running from $1$ to $N$) of an $N\times N$
GUE Hermitian random matrix. In particular, the distribution of
$\big(X_1^N(t;N),\dots,X_N^N(t;N)\big)$ at a fixed time $t$ has a probability density proportional
to
\begin{equation}
\label{eq_beta_Hermite} \prod_{1\le i<j \le N} (x_j-x_i)^\beta \; \prod_{i=1}^N
\exp\left(-\frac{x_i^2}{2t}\right),
\end{equation}
We refer to Section \ref{Section_setup} and to \cite{GS} for more details on the process $X(t;N)$.

\medskip

We will be concerned with the asymptotic behavior of the particles at the \emph{edge} of the
process $X(t;N)$, that is with the behavior of the rightmost particles on the different levels in
$X(t;N)$ when $N$ becomes large. For $\beta=2$ the result of \cite{TW-U}, \cite{F1} yields the
convergence in distribution
\begin{equation}
\label{eq_TW} \frac{X^N_N(N;N)-2N}{N^{1/3}} \, \stackrel{N\to\infty}{\longrightarrow}\, F_2,
\end{equation}
where $F_2$ is know known as the GUE Tracy--Widom distribution. It is convenient for us to choose
$t=N$ in \eqref{eq_TW}, but we note that due to the Brownian scaling property of $X(t;N)$ we can
choose any other fixed time $t$ instead and absorb the change into the normalization terms.

\medskip

The convergence in \eqref{eq_TW} can be extended to similar statements for several coordinates
$X^N_N,\,X^N_{N-1},\,\ldots,\,X^N_{N-j}$ with $j$ being kept finite as $N$ tends to infinity, and
also to \emph{dynamic} statements describing the joint distribution of these coordinates at
several times. The latter results identify the edge scaling limit of the $\beta=2$ Dyson Brownian
Motion with the Airy line ensemble; dynamic scaling limits are also available in the cases
$\beta=1,4$, see \cite{KNT}, \cite{CH}, \cite{Sodin}, \cite{OT} and
 references therein. For general real values of $\beta$
only the convergence of the fixed time distributions is known, but the results extend to very
general random matrix distributions, see \cite{RRV}, \cite{BEY}, \cite{KRV}, \cite{Shch},
\cite{BFG}.

\medskip

In a similar yet different direction, one can study \emph{multilevel} edge scaling limits at a
\textit{fixed time}, that is the asymptotics of the joint distributions of the coordinates
$X^{N-i}_{N-j}(t;N)$ with varying $i,j$, but fixed $t$. One usually takes the indices $i$ an $j$
to be on the order of $N^{2/3}$. Interestingly, for $\beta=2$ the resulting joint distribution
converges in the limit $N\to\infty$ (after proper centering and scaling) to the same Airy line
ensemble (see \cite{FN} and \cite{Sodin}). A similar phenomenon has been also demonstrated for
$\beta=1$ in \cite{Sodin}.

We note that in such results the information about the spacings between the extremal particles on
adjacent levels (that is the differences $X^{N+1-i}_{N+1-i}(N;N)-X_{N-i}^{N-i}(N;N)$,
$i=1,2,\ldots$) is lost. This has to do with the fact that the scaling one needs to apply to these
spacings to see a non-trivial limiting behavior is not $N^{1/3}$ of \eqref{eq_TW}. Our first
result is a limit theorem for such spacings at a \textit{fixed time}.

\begin{theorem}[Theorem \ref{Theorem_convergence_fixed_time}] \label{Theorem_convergence_fixed_time_intro}
For every fixed $k\in\nn$ the $k$-dimensional random vectors
\begin{multline}
\label{eq_vector_diffs}
 \bigg( X_N^N\Big( \frac{2N}{\beta};N\Big)-X_{N-1}^{N-1}\Big(\frac{2N}{\beta};N\Big),\,
 X_{N-1}^{N-1}\Big(\frac{2N}{\beta};N\Big)-X_{N-2}^{N-2}\Big(\frac{2N}{\beta};N\Big),\,\ldots,\\
 X_{N-k+1}^{N-k+1}\Big(\frac{2N}{\beta};N\Big)-X_{N-k}^{N-k}\Big(\frac{2N}{\beta};N\Big) \bigg)
\end{multline}
converge in distribution in the limit $N\to\infty$ to a random vector whose components are
independent identically distributed, each according to the Gamma distribution with density
\begin{equation} \label{eq_Gammas}
\frac{1}{\Gamma(\beta/2)} \Big(\frac{\beta}{2}\Big)^{\beta/2}\,x^{\beta/2-1}\,e^{-\frac{\beta}{2}x}\,.
\end{equation}
\end{theorem}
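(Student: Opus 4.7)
The plan is to compute the joint distribution of the spacings $D_j := X^{N-j+1}_{N-j+1}(2N/\beta;N) - X^{N-j}_{N-j}(2N/\beta;N)$ starting from the explicit joint density of the $\beta$-Hermite corners process at the fixed time $t = 2N/\beta$, and then perform an asymptotic analysis as $N \to \infty$.

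First, I would write the joint density of $X(t;N)$ at this time on the Gelfand-Tsetlin cone $\GG$ in the explicit form (cf.\ \cite{GS})
\begin{equation*}
p(x) \propto \prod_{1 \leq i<j \leq N}(x^N_j - x^N_i) \cdot \prod_{k=1}^{N-1}\prod_{i=1}^{k}\prod_{j=1}^{k+1}(x^{k+1}_j - x^k_i)^{\beta/2-1} \cdot \exp\!\Big(\!-\frac{\beta}{4N}\sum_{i=1}^{N}(x^N_i)^2\Big),
\end{equation*}
which can be derived from the Markov decomposition across levels via the Dixon-Anderson conditional densities. The key observation is that among the interlacing factors, $(X^{N-j+1}_{N-j+1} - X^{N-j}_{N-j})^{\beta/2-1} = D_j^{\beta/2-1}$ appears directly and provides the shape $\beta/2$ of the target Gamma distribution.

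Next, to obtain the joint marginal of $(D_1,\ldots,D_k)$, I would use the Markov property in the level index: conditional on $X^{k+1}$, the level-$k$ configuration $X^k$ has the Dixon-Anderson density with parameter $\beta/2$. Iterating this and marginalizing over the non-rightmost particles on each level yields the joint law of the top particles $(X^N_N, X^{N-1}_{N-1}, \ldots, X^{N-k}_{N-k})$, and hence of $(D_1,\ldots,D_k)$.

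The asymptotic analysis would proceed as follows. By edge rigidity for the $\beta$-Hermite ensemble (cf.\ \cite{BEY}, \cite{BFG}), the non-rightmost particles on each level concentrate near their Airy$_\beta$ positions, which lie $O(N^{1/3})$ below the top, much larger than the $O(1)$ scale of the $D_j$'s. This scale separation permits a Taylor expansion of the Dixon-Anderson marginals in the $D_j$'s. The linearized slowly-varying factors, together with the linearization of the Gaussian weight $\exp(-\beta(x^N)^2/(4N))$ near the edge $x \approx 2N$, should combine to yield the effective exponential decay $\exp(-(\beta/2) D_j)$ for each $j$. Asymptotic independence of the $D_j$'s would follow because the sub-top particles on distinct levels decouple in the scaling limit, so each $D_j$ is affected only by the local configuration near the top of its own level.

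The main obstacle lies in the rigorous asymptotic analysis of the Dixon-Anderson marginal of the largest coordinate, which is a Selberg-type integral with no closed form for general $\beta$. One must combine concentration estimates for the sub-top particles with a careful asymptotic expansion of this Selberg integral using the Wigner semicircle equilibrium measure on $[-2N,2N]$, and then verify that the contributions from interlacing factors, the top-level Vandermonde, and the Gaussian weight combine to yield exactly the rate $\beta/2$. A naive summation of linearized contributions (using the inverse-distance integral $\int \rho(x)/(2N-x)\,dx = 1/N$) produces only the rate $\beta - 2$, so more subtle higher-order corrections from the interplay between the top-level Vandermonde and the Gaussian weight will be required. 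An alternative route, possibly cleaner, is to compute the Laplace transform of $(D_1,\ldots,D_k)$ and use Selberg--Dixon-Anderson identities to reduce the computation to a one-dimensional asymptotic.
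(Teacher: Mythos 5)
Your overall strategy (work with the fixed-time corners density, exploit the explicit factor $D_j^{\beta/2-1}$, linearize the slowly varying factors, and get independence level by level) is the same route the paper takes, but your execution has a genuine gap at the decisive quantitative step, and you acknowledge it yourself: your linearization produces the rate $\beta-2$ instead of $\beta/2$, and you defer the discrepancy to unspecified ``higher-order corrections'' and to asymptotics of a Selberg-type integral for which you offer no method. The discrepancy is not a higher-order effect; it comes from two bookkeeping errors. First, the joint density you write down is wrong: besides the interlacing factors $|x^k_a-x^{k+1}_b|^{\beta/2-1}$ and the top-level Vandermonde, the corners density \eqref{eq_beta_Hermite_corners} contains the within-level factors $\prod_{1\le i<j\le k}(x^k_j-x^k_i)^{2-\beta}$ on every level $k<N$, which you have dropped (they cancel only when $\beta=2$). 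Second, you should not condition on (or expand around) the configurations of the lower levels; you must integrate them out exactly first. By the Gibbs property (\cite[Proposition 1.3]{GS}) the marginal of the top two levels has density \eqref{eq_2_level_density}, in which the level-$(N-1)$ Vandermonde enters with exponent $1$. Conditioning there on all coordinates except $\Y_{N-1}$ gives the completely explicit one-dimensional density $f(z)\propto z^{\beta/2-1}\prod_{i\le N-2}(x_N-y_i-z)\prod_{i\le N-1}(x_N-x_i-z)^{\beta/2-1}$, so no Selberg asymptotics is needed at all; the exponential rate is then $1\cdot\lim\sum_i(x_N-y_i)^{-1}+(\beta/2-1)\cdot\lim\sum_i(x_N-x_i)^{-1}=1+(\beta/2-1)=\beta/2$, and the Gaussian weight never enters (it acts on the $x$'s, which are frozen). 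With your scheme (all levels retained and the $2-\beta$ factors restored) the naive rate would even be $(2-\beta)+(\beta/2-1)+(\beta/2-1)=0$, which shows that conditioning on the lower levels is not a harmless simplification.

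There is a second, smaller gap: justifying that $\sum_i(\X_N-\X_i)^{-1}\to 1$ and that the adjacent term is negligible cannot be waved through by ``edge rigidity permits a Taylor expansion,'' because of the singularity of $x\mapsto 1/x$ at the edge. The paper handles this with the integration-by-parts identity $\ev\big[\sum_{i=1}^{N-1}(\X_N-\X_i)^{-1}\big]=\ev[\X_N]/(2N)\to 1$ (Lemma \ref{Lemma_inverse_expectation}), combined with the semicircle law and bulk rigidity for a matching lower bound, and with a separate argument showing $\ev\big[(\X_N-\X_{N-1})^{-1}\big]\to 0$ (Lemma \ref{lemma_one_inverse}); you would need substitutes for these estimates, plus the tightness bound $f(z)/f(1)\le z^{\beta/2-1}e^{-(z-1)/C}$ to upgrade pointwise convergence of the density ratio to convergence in distribution. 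Finally, for general $k$ the independence does not need any ``decoupling in the scaling limit'' argument: it follows by induction, since conditionally on the top $m$ levels the next spacing again sits in a two-level density of the form \eqref{eq_2_level_density}, so the $k=1$ computation repeats verbatim.
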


\begin{rmk} The absence of an additional space scaling in \eqref{eq_vector_diffs}
can be explained by the following heuristics: the $(N-1)$ differences $X_{N+1-i}^{N+1-i}\big(\frac{2N}{\beta};N\big)-X_{N-i}^{N-i}\big(\frac{2N}{\beta};N\big)$, $i=1,2,\ldots,N-1$ are positive and sum up to approximately $2N$ (in view of \eqref{eq_TW} and its general $\beta$ analogue). This suggests that the typical size of each such difference is of constant order.
\end{rmk}

\medskip

We remark that while $X(t;N)$ does no longer satisfy the SDEs \eqref{eq_betaWarren_Intro} for
small values of $\beta$, an analogue of Theorem \ref{Theorem_convergence_fixed_time_intro} remains
true for all $\beta\ge 1$, see Section \ref{Section_fixed}. In particular, for $\beta=1,2,4$ we
get the following statements about random matrices from Gaussian Orthogonal, Unitary and
Symplectic ensembles (GOE, GUE, GSE, respectively), which we (surprisingly) were not able to find
in the literature.

\begin{corollary} \label{Corollary_GOE} Consider a GOE random matrix of size $N\times N$,
normalized such that the variance of its diagonal elements is equal to $2N$. For $k=1,2,\ldots,N$
write $\lambda^{(k)}(N)$ for the largest eigenvalue of the $(N+1-k)\times(N+1-k)$ top-left
submatrix of that matrix. Then, for any fixed $K\in\nn$ the random vectors
$$
\big(\lambda^{(1)}(N)-\lambda^{(2)}(N),\,\lambda^{(2)}(N)-\lambda^{(3)}(N),\,\ldots,\,\lambda^{(K)}(N)-\lambda^{(K+1)}(N)\big)
$$
converge in distribution as $N\to\infty$ to a random vector with i.i.d.\ Gamma-distributed entries
\eqref{eq_Gammas} with $\beta=1$.
\end{corollary}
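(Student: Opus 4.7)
The plan is to reduce the corollary to the $\beta=1$ case of Theorem \ref{Theorem_convergence_fixed_time_intro} (using the extension to $1\le\beta<4$ announced in the remark preceding the corollary and carried out in Section \ref{Section_fixed}), and then to translate the resulting statement about particles in $X(t;N)$ into the language of eigenvalues of top-left corners of a GOE matrix.

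First, I would match normalizations. For the GOE matrix $M$ of the corollary (diagonal variance $2N$, off-diagonal variance $N$), the joint density of the eigenvalues of the full matrix is proportional to
\[
\prod_{1\le i<j\le N}|x_j-x_i|\,\exp\Big(-\sum_{i=1}^N \frac{x_i^2}{4N}\Big),
\]
which is exactly \eqref{eq_beta_Hermite} with $\beta=1$ and $t=2N$. Since the $\beta$-Hermite corners process with parameter $t$ is by construction the extension of the joint law of the sorted eigenvalues of \emph{all} $k\times k$ top-left submatrices of a GOE matrix (which interlace in the classical Cauchy sense for real symmetric matrices), the fixed-time distribution of $X(2N;N)$ at $\beta=1$ coincides in law with the joint distribution of the sorted eigenvalues of the top-left corners of $M$. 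In particular, the largest eigenvalue of the $(N+1-k)\times(N+1-k)$ top-left submatrix of $M$ has the same law as the rightmost particle on the $(N+1-k)$-th level, namely $X^{N+1-k}_{N+1-k}\!\big(2N;N\big)$. Thus $\lambda^{(k)}(N)\stackrel{d}{=} X^{N+1-k}_{N+1-k}\!\big(2N;N\big)$ \emph{jointly in $k$}.

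Second, with $\beta=1$ the time $2N/\beta$ in Theorem \ref{Theorem_convergence_fixed_time_intro} becomes $2N$, which is exactly the time that matches the GOE normalization chosen in the corollary. Writing the vector of consecutive differences of largest-eigenvalues of corners as
\[
\bigl(\lambda^{(1)}(N)-\lambda^{(2)}(N),\,\ldots,\,\lambda^{(K)}(N)-\lambda^{(K+1)}(N)\bigr)
\stackrel{d}{=}
\bigl(X^{N}_{N}-X^{N-1}_{N-1},\,\ldots,\,X^{N+1-K}_{N+1-K}-X^{N-K}_{N-K}\bigr)\big|_{t=2N},
\]
the right-hand side is precisely the vector \eqref{eq_vector_diffs} with $k=K$. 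Applying the $\beta=1$ version of Theorem \ref{Theorem_convergence_fixed_time_intro} gives convergence in distribution to a vector of i.i.d.\ entries with density \eqref{eq_Gammas} at $\beta=1$, which is the claim.

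The only nontrivial point is the applicability of Theorem \ref{Theorem_convergence_fixed_time_intro} at $\beta=1$, since its statement assumes $\beta\ge 4$ for technical reasons (existence/uniqueness of $X(t;N)$ solving the singular SDEs \eqref{eq_betaWarren_Intro} in the collision-free regime). This is the main obstacle, but it is bypassed exactly by the extension to $\beta\ge 1$ noted in the remark before the corollary and proved in Section \ref{Section_fixed}: the argument used at the level of fixed-time distributions does not require the SDE representation of $X(t;N)$ but only its fixed-time law \eqref{eq_beta_Hermite} together with its Gelfand--Tsetlin structure, both of which are defined for every $\beta>0$. Taking this extension as granted, the corollary follows immediately from the identification above.
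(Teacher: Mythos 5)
Your proposal is correct and follows essentially the same route the paper intends: identify the GOE corner eigenvalues (with the stated normalization) with the Hermite $\beta=1$ corners process at variance $2N$, and then invoke the $\beta\ge 1$ version of Theorem \ref{Theorem_convergence_fixed_time} with $NT_0=2N$, whose limiting Gamma density reduces to \eqref{eq_Gammas} at $\beta=1$. The only cosmetic remark is that the identification of the corners process with the joint law of GOE corner eigenvalues is not ``by construction'' but a theorem, for which the paper cites \cite{Neretin}; with that reference your argument matches the paper's.
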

\begin{corollary} \label{Corollary_GUE} Consider a GUE random matrix of size $N\times N$,
normalized such that the variance of its diagonal elements is equal to $N$. For $k=1,2,\ldots,N$
write $\lambda^{(k)}(N)$ for the largest eigenvalue of the $(N+1-k)\times(N+1-k)$ top-left
submatrix of that matrix. Then, for any fixed $K\in\nn$ the random vectors
$$
\big(\lambda^{(1)}(N)-\lambda^{(2)}(N),\,\lambda^{(2)}(N)-\lambda^{(3)}(N),\,\ldots,\,\lambda^{(K)}(N)-\lambda^{(K+1)}(N)\big)
$$
converge in distribution as $N\to\infty$ to a random vector with i.i.d.\ mean $1$ exponentially
distributed entries.
\end{corollary}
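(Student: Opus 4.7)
The plan is to deduce this corollary directly from Theorem \ref{Theorem_convergence_fixed_time_intro} applied at $\beta=2$. The only substantive ingredient beyond the theorem is the identification of the fixed-time marginal of the process $X(t;N)$ with the joint law of eigenvalues of the top-left corners of a GUE matrix, which has already been recorded in the introduction.

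Concretely, first I would recall from the discussion preceding \eqref{eq_beta_Hermite} that at any fixed time $t>0$, the distribution of $X(t;N)$ coincides with the appropriately scaled $\beta$-Hermite corners process, and that for $\beta=2$ this process is exactly the joint distribution of eigenvalues of all $k\times k$ top-left submatrices ($k=1,\ldots,N$) of a GUE matrix under some normalization depending on $t$. Next I would fix this normalization by matching densities: the top-level density \eqref{eq_beta_Hermite} at $\beta=2$ is proportional to $\prod_{i<j}(x_j-x_i)^2\prod_i e^{-x_i^2/(2t)}$, and setting $t=N$ produces precisely the joint eigenvalue density of a GUE matrix whose diagonal entries have variance $N$. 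Consequently, the random variable $X^{N+1-k}_{N+1-k}(N;N)$ can be identified in distribution with $\lambda^{(k)}(N)$, the largest eigenvalue of the $(N+1-k)\times(N+1-k)$ top-left submatrix of such a matrix, jointly over $k$.

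Since $\beta=2$ gives $2N/\beta=N$, Theorem \ref{Theorem_convergence_fixed_time_intro} then asserts the joint convergence in distribution of
$$\bigl(X^{N+1-k}_{N+1-k}(N;N)-X^{N-k}_{N-k}(N;N)\bigr)_{k=1,\ldots,K}$$
to a vector of i.i.d.\ variables with density \eqref{eq_Gammas} at $\beta=2$. A direct substitution in \eqref{eq_Gammas} yields
$$\frac{1}{\Gamma(1)}\,1^1\,x^0\,e^{-x}=e^{-x},\qquad x>0,$$
which is the standard exponential density with mean $1$. Translating back through the identification of the previous paragraph gives precisely the claimed joint convergence of $\bigl(\lambda^{(k)}(N)-\lambda^{(k+1)}(N)\bigr)_{k=1,\ldots,K}$.

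There is essentially no technical obstacle once Theorem \ref{Theorem_convergence_fixed_time_intro} is available; the corollary is a pure translation between the two parametrizations. The only point requiring a little care is the bookkeeping for the normalization of the GUE matrix, i.e.\ verifying that ``variance of diagonal equal to $N$'' is exactly what matches the time $t=N$ choice of the $\beta$-Hermite corners process; the analogous matching for $\beta=1$ (leading to variance $2N$ in Corollary \ref{Corollary_GOE}) shows why the normalization shifts by a factor of two in that case.
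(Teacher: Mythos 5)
Your argument is correct and is essentially the paper's own (implicit) derivation: the corollary is exactly the fixed-time result at $\beta=2$ (time $2N/\beta=N$, i.e.\ Theorem \ref{Theorem_convergence_fixed_time} with $T_0=2/\beta$) combined with the identification, via Definition \ref{def_betacorner} and \cite{Neretin}, of the fixed-time law of $X(N;N)$ with the GUE corners process normalized so that diagonal entries have variance $N$, after which \eqref{eq_Gammas} reduces to the mean-one exponential density $e^{-x}$. The only cosmetic point is that for $\beta=2$ the correct citation is the Section \ref{Section_fixed} statement Theorem \ref{Theorem_convergence_fixed_time}, valid for all $\beta\ge1$, rather than Theorem \ref{Theorem_convergence_fixed_time_intro}, which is formulated under the paper's standing assumption $\beta\ge4$.
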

\begin{corollary} \label{Corollary_GSE} Consider a GSE random matrix of size $N\times N$,
normalized such that the variance of its diagonal elements is equal to $N/2$. For $k=1,2,\ldots,N$
write $\lambda^{(k)}(N)$ for the largest eigenvalue of the $(N+1-k)\times(N+1-k)$ top-left
submatrix of that matrix. Then, for any fixed $K\in\nn$ the random vectors
$$
\big(\lambda^{(1)}(N)-\lambda^{(2)}(N),\,\lambda^{(2)}(N)-\lambda^{(3)}(N),\,\ldots,\,\lambda^{(K)}(N)-\lambda^{(K+1)}(N)\big)
$$
converge in distribution as $N\to\infty$ to a random vector with i.i.d.\ Gamma-distributed entries
\eqref{eq_Gammas} with $\beta=4$.
\end{corollary}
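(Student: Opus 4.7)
The plan is to derive the corollary directly from Theorem \ref{Theorem_convergence_fixed_time_intro} by specializing to $\beta=4$ and realizing the process $(\lambda^{(k)}(N))_{k=1}^N$ of largest eigenvalues of top-left corners as the rightmost coordinates $(X_k^k(t;N))_{k=1}^N$ of the multilevel process at $t=2N/\beta=N/2$. There are two genuine ingredients: a matrix-model interpretation of the fixed-time law of $X(t;N)$ for $\beta=4$, and a matching of the GSE normalization with the choice $t=N/2$.

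First I would invoke the classical fact, recalled in Section \ref{Section_setup} and treated in detail in \cite{GS}, that for $\beta=4$ and any fixed $t>0$ the joint distribution of the entire array $(X_i^k(t;N))_{1\le i\le k\le N}$ coincides with the joint distribution of the eigenvalues of the $k\times k$ top-left corners, $k=1,\ldots,N$, of a suitably normalized GSE matrix. In particular, $X_k^k(t;N)$ is identified with the largest eigenvalue of the $k\times k$ top-left corner.

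Next I would match the normalizations. From \eqref{eq_beta_Hermite} with $\beta=4$ and $t=N/2$, the top-level density is proportional to $\prod_{i<j}(x_j-x_i)^4 \prod_i \exp(-x_i^2/N)$. A short direct computation for the Gaussian measure of the form $\exp(-\mathrm{tr}(H^2)/N)\,\mathrm{d}H$ on $N\times N$ self-dual quaternion-Hermitian matrices $H$ reproduces exactly this eigenvalue density and assigns variance $N/2$ to each (real) diagonal entry, matching the normalization in the statement of the corollary.

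With these two points in place, $\lambda^{(k)}(N)\stackrel{d}{=}X_{N+1-k}^{N+1-k}(N/2;N)$ jointly in $k$, so the vector of consecutive differences in the corollary has the same distribution as the vector \eqref{eq_vector_diffs} at $\beta=4$. Theorem \ref{Theorem_convergence_fixed_time_intro} then delivers the claimed i.i.d.\ limit, and \eqref{eq_Gammas} at $\beta=4$ simplifies to $\tfrac{1}{\Gamma(2)}\,2^2\,x\,e^{-2x}=4x\,e^{-2x}$, the density of the Gamma$(2,2)$ law. The only nontrivial step is the $\beta=4$ corners identification, which I would simply cite from \cite{GS} rather than reprove.
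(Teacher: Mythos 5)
Your proposal is correct and follows exactly the route the paper intends for its corollaries: combine Theorem \ref{Theorem_convergence_fixed_time_intro} (equivalently Theorem \ref{Theorem_convergence_fixed_time} at $T_0=2/\beta$) with the identification, recalled in Section \ref{Section_setup} and due to \cite{Neretin}, of the fixed-time law of $X(t;N)$ for $\beta=4$ with the joint eigenvalue distribution of GSE corners, plus the normalization check that variance $t=2N/\beta=N/2$ of the corners process corresponds to diagonal variance $N/2$ of the GSE matrix. The only point to state carefully is that in your Gaussian weight $\exp(-\mathrm{tr}(H^2)/N)$ the trace must be the quaternionic (real-part) trace over the $N$ diagonal entries, so that the eigenvalue density $\prod_{i<j}(x_j-x_i)^4\prod_i e^{-x_i^2/N}$ and the diagonal variance $N/2$ come out simultaneously.
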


Our next aim is to study the \emph{dynamic multilevel} edge scaling limits. We first note that for
the $N^{1/3}$ scaling as in \eqref{eq_TW} no such results are available in the literature. In a
similar, yet different multilevel dynamics coming from Gaussian random matrices \emph{multilevel}
edge scaling limits at \textit{multiple times} were obtained recently in \cite{Sodin}, however we
believe that the corresponding \textit{dynamic} limits should be different from those in our
setting. We refer to \cite{BF} for a related discussion in the case of $\beta=2$.

\medskip

Instead of looking at the extremal eigenvalues on levels of distance of order $N^{2/3}$ from each
other as in \cite{Sodin}, we continue our study of the spacings between the rightmost particles on
adjacent levels, and show the existence of a \textit{dynamic} scaling limit of those in our next
theorem. Hereby, for $n=1,2,\ldots$ we use the notation $\mathcal{C}^n$ for the space of
continuous functions from $[0,\infty)$ to $[0,\infty)^n$, endowed with the topology of
componentwise uniform convergence on compact sets.

\begin{theorem}[Theorem \ref{Theorem_multitime_convergence}] \label{Theorem_multitime_convergence_intro}
For any $\beta\ge 4$ and $k=1,2,\ldots$ the distribution of the process
\begin{multline}
\label{eq_differences_vector_intro}
 \bigg( X_N^N\Big( \frac{2N}{\beta}+t;N\Big)-X_{N-1}^{N-1}\Big(\frac{2N}{\beta}+t;N\Big),\,
 X_{N-1}^{N-1}\Big( \frac{2N}{\beta}+t;N\Big)-X_{N-2}^{N-2}\Big(\frac{2N}{\beta}+t;N\Big),\,\ldots,\\
 X_{N-k+1}^{N-k+1}\Big(\frac{2N}{\beta}+t;N\Big)-X_{N-k}^{N-k}\Big(\frac{2N}{\beta}+t;N\Big) \bigg),\quad t\ge 0
\end{multline}
on $\mathcal{C}^k$ converges weakly to that of a process
\eq\label{whatisR}
 \big( R_1(t), R_2(t),\ldots,R_k(t) \big), \quad t\ge 0.
\en
Moreover, the process of \eqref{whatisR} is a stationary Markov process.
\end{theorem}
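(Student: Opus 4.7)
The plan is to derive a limiting SDE system for the gap process by identifying which drift terms in \eqref{eq_betaWarren_Intro} survive the edge scaling, establish weak convergence via tightness plus identification, and then combine Theorem~\ref{Theorem_convergence_fixed_time_intro} with the Brownian scaling of $X(\,\cdot\,;N)$ to conclude stationarity.

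First, I would write the pre-limit SDE for the gaps $Y^{(N)}_i(t) := X^{N+1-i}_{N+1-i}(2N/\beta+t;N)-X^{N-i}_{N-i}(2N/\beta+t;N)$ using \eqref{eq_betaWarren_Intro}, splitting the drift of each $Y^{(N)}_i$ into three pieces: (a) the singular nearest-neighbor terms $(\beta/2-1)/Y^{(N)}_i$ and $-(\beta/2-1)/Y^{(N)}_{i+1}$, coming from the attractive interactions of top particles on consecutive levels; (b) the same-level repulsion from the second-largest particles $X^{N+1-i}_{N-i}$ and $X^{N-i}_{N-i-1}$; (c) a ``bulk'' contribution from all the remaining particles on levels $N+1-i$, $N-i$, $N-i-1$. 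The martingale parts consolidate into $\mathrm{d}\zeta^{(N)}_i - \mathrm{d}\zeta^{(N)}_{i+1}$, where $\zeta^{(N)}_i := W^{N+1-i}_{N+1-i}$ are mutually independent Brownian motions.

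The main technical step is to show that contributions (b) and (c) vanish as $N\to\infty$. For (b), the top same-level spacing of the $\beta$-Hermite ensemble of size $k\approx N$ at time $2N/\beta$ lives on the Tracy--Widom scale of order $N^{1/3}$, so the corresponding reciprocal drifts are $O(N^{-1/3})$. For (c), each bulk sum of the form $\sum_{j}(X^k_k-X^{k-1}_j)^{-1}$ and $\sum_{j}(X^k_k-X^k_j)^{-1}$ is an approximate Stieltjes transform of the empirical measure of the corresponding level evaluated just past the edge; since the empirical measures of adjacent levels of the $\beta$-corners process differ only by the position of a single particle and coincide up to $O(1/N)$ in an appropriate weak sense (via interlacing and edge rigidity), the two bulk sums cancel up to $o(1)$ after truncating the singularity at the edge (the truncation being justified by (b)). Establishing this quantitative cancellation is the main obstacle. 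Once it is in place, tightness of $(Y^{(N)}_1,\ldots,Y^{(N)}_k)$ in $\mathcal{C}^k$ follows from Theorem~\ref{Theorem_convergence_fixed_time_intro} combined with Kolmogorov-type moment estimates derived from the pre-limit SDE, and identification of subsequential limits through the martingale problem yields weak convergence to a process $(R_1,\ldots,R_k)$ governed by the limiting system
\begin{equation*}
\mathrm{d}R_i(t)=\Big(\tfrac{\beta/2-1}{R_i(t)}-\tfrac{\beta/2-1}{R_{i+1}(t)}\Big)\,\mathrm{d}t + \mathrm{d}\zeta_i(t) - \mathrm{d}\zeta_{i+1}(t),\qquad i\ge 1,
\end{equation*}
viewed as the restriction of a Markov diffusion on $[0,\infty)^{\nn}$.

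The Markov property of $(R_1,\ldots,R_k)$ is then inherited from this limiting SDE system. Stationarity is deduced from Theorem~\ref{Theorem_convergence_fixed_time_intro} applied at any fixed time $2N/\beta+t$, combined with the Brownian scaling identity $X(s;N)\stackrel{d}{=}\sqrt{s/s_0}\,X(s_0;N)$: with $s=2N/\beta+t$ and $s_0=2N/\beta$ one has $\sqrt{s/s_0}=\sqrt{1+t\beta/(2N)}\to 1$, hence the time-$t$ marginal of $(R_1(t),\ldots,R_k(t))$ is the product of i.i.d.\ Gammas \eqref{eq_Gammas} for every $t\ge 0$. A Markov process with constant-in-time marginal is stationary.
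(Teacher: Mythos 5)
Your overall strategy (write the pre-limit gap SDE, show that all but the nearest-neighbor singular drift terms have a deterministic limit, then pass to the limit via a martingale problem) matches the paper's, but there is a structural gap in how you close the $k$-dimensional system, and a few of the technical steps you wave at are precisely where the paper has to work hardest.

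The structural issue: your limiting system
\[
\mathrm{d}R_i=\Big(\tfrac{\beta/2-1}{R_i}-\tfrac{\beta/2-1}{R_{i+1}}\Big)\,\mathrm{d}t+\mathrm{d}\zeta_i-\mathrm{d}\zeta_{i+1},\qquad i\ge1,
\]
is the infinite one, and for a finite window of $k$ gaps it is not closed. If you use the multilevel SDE \eqref{eq_betaWarren_Intro} naively, the drift of the bottom particle $X^{N-k}_{N-k}$ drags in level $N-k-1$ and the resulting $k$-gap process is not a Markov process in its own filtration. The paper circumvents this by invoking Proposition~\ref{Proposition_GS_restriction}: the trajectory $\big(X^{N-k}_1,\dots,X^{N-k}_{N-k}\big)$ is, in its own filtration, a $\beta$-Dyson Brownian motion with repulsion coefficient $\beta/2$ (not $\beta/2-1$). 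As a consequence the ``bulk'' term $\hat S_k$ attached to the last gap does not cancel to zero but converges to $\sqrt{\beta/(2T_0)}$, and the limiting system is the closed $k$-dimensional SDE \eqref{eq_SDE_for_diffs} with the extra drift $-\sqrt{\beta/(2T_0)}$ on $R_k$. Your decomposition treats all coordinates alike, so this closing drift is absent, and the Markovianity of the $k$-gap limit is not established. (It is true that the restriction of the infinite solution to finitely many coordinates is Markov, but that is Proposition~\ref{proposition_restriction}, which is a \emph{consequence} of the closed $k$-dimensional statement, not an input to it.)

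Beyond that, three steps you describe as available are actually the substance of the argument. (i) For your term (b), knowing that $X^N_N-X^N_{N-1}$ lives at scale $N^{1/3}$ in distribution does \emph{not} give decay of $\ev[(X^N_N-X^N_{N-1})^{-1}]$, because $x\mapsto 1/x$ is unbounded; the paper's Lemma~\ref{lemma_one_inverse} is a nontrivial density-ratio argument precisely to get this $L^1$ bound, and it is also needed to justify truncating the singularity when you compare the two bulk sums. (ii) Tightness cannot be obtained from ``Kolmogorov-type moment estimates derived from the pre-limit SDE'' because the drift is unbounded near zero; the paper instead sandwiches the gap between a Bessel process of integer dimension (comparison theorem plus the triangle inequality for $D$-dimensional norms) from above and controls the downward excursions by the sign structure from interlacing plus the second-moment bound of Lemma~\ref{Lemma_inverse_expectation_squared}. (iii) You never establish weak uniqueness for the limiting SDE; without that the martingale-problem argument only gives subsequential limits and does not yield the Markov property, whereas the paper proves uniqueness by a local-in-time Girsanov reduction to independent Bessel processes (Section~\ref{Section_uniqueness}).

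Your derivation of stationarity from Theorem~\ref{Theorem_convergence_fixed_time_intro} together with the Brownian scaling identity $X(s;N)\stackrel{d}{=}\sqrt{s/s_0}\,X(s_0;N)$ is correct and matches the paper's reasoning.
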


Note that for every fixed $N$ each particle in the dynamics \eqref{eq_betaWarren_Intro} was
interacting with all other particles on its own level and on the level below it. In particular,
there was no reason to expect that the joint dynamics of the rightmost particles on the different
levels form a Markov process. Nonetheless, a Markov process arises once one passes to the
scaling limit.

\smallskip

Next, we identify the dynamics of the process of \eqref{whatisR}.

\begin{theorem}[Theorem \ref{Theorem_multitime_convergence}] \label{Theorem_multitime_convergence_intro_2}
For any $\beta\ge 4$ and $k=1,2,\ldots$ consider the $(k+1)$-dimensional process
$\big(Z_1^{(k)},\,Z_2^{(k)},\,\ldots,\,Z_{k+1}^{(k)}\big)$ given by the weak solution of the system
of SDEs
\begin{equation}
\label{eq_SDE_for_coords}
\begin{split}
& \mathrm{d}Z_i^{(k)}(t)=\frac{(\beta/2-1)\,\mathrm{d}t}{Z_i^{(k)}(t)-Z_{i+1}^{(k)}(t)}+\mathrm{d}B_i(t),\quad i=1,2,\ldots,k,\\
& \mathrm{d}Z_{k+1}^{(k)}(t)=\mathrm{d}t+\mathrm{d}B_{k+1}(t)
\end{split}
\end{equation}
where $B_1,\,B_2,\,\ldots,\,B_{k+1}$ are i.i.d.\ one-dimensional standard Brownian motions and the
initial condition is chosen such that $Z_{k+1}^{(k)}(0)=0$ and
$Z_i^{(k+1)}(0)-Z_{i+1}^{(k+1)}(0)$, $i=1,2,\ldots,k$ are i.i.d.\ Gamma distributed with density
\eqref{eq_Gammas}. Then the following equality in law between processes holds:
$$
\big(R_1,\,R_2,\,\ldots,\,R_k \big)\stackrel{d}{=}
\big(Z_1^{(k)}-Z_2^{(k)},\,Z^{(k)}_2-Z^{(k)}_3,\,\ldots,\, Z_k^{(k)}-Z_{k+1}^{(k)}\big).
$$
\end{theorem}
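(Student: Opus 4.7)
My plan is to derive the SDEs \eqref{eq_SDE_for_coords} by analyzing the limiting semimartingale decomposition of the top-level processes $t\mapsto X^{N-i+1}_{N-i+1}(2N/\beta+t;N)$, for $i=1,\ldots,k+1$. Denote the $i$-th coordinate of \eqref{eq_differences_vector_intro} by $R_i^{(N)}(t)$, and define auxiliary processes
$$Z_i^{(k,N)}(t) := X^{N-i+1}_{N-i+1}(2N/\beta+t;N) - m_i(N) - c_i(N)\, t$$
with centering sequences $m_i(N),c_i(N)$ chosen so that $Z_{k+1}^{(k,N)}(0)=0$, $Z_i^{(k,N)}(0)-Z_{i+1}^{(k,N)}(0)=R_i^{(N)}(0)$, and $c_{i+1}(N)-c_i(N)\to 0$ as $N\to\infty$. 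By Theorem~\ref{Theorem_multitime_convergence_intro} the gap process converges weakly, and a tightness argument for the $Z^{(k,N)}$ coordinates themselves produces a weak subsequential limit $(Z_1^{(k)},\ldots,Z_{k+1}^{(k)})$ whose gaps are exactly $(R_1,\ldots,R_k)$.

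The next step is to decompose the drift of $X^{N-i+1}_{N-i+1}$ in \eqref{eq_betaWarren_Intro} into three pieces. The \emph{leading singular} term $(\beta/2-1)/(X^{N-i+1}_{N-i+1}-X^{N-i}_{N-i})=(\beta/2-1)/R_i^{(N)}$ converges to $(\beta/2-1)/(Z_i^{(k)}-Z_{i+1}^{(k)})$. The \emph{secondary singular} term $-(\beta/2-1)/(X^{N-i+1}_{N-i+1}-X^{N-i+1}_{N-i})$, from the second-top on the same level, is of order $N^{-1/3}$ by edge universality for $\beta$-Hermite ensembles and vanishes. For the remaining bulk sums over $O(N)$ particles on the same and immediately lower levels, the identity
$$\frac{1}{X^{N-i+1}_{N-i+1}-X^{N-i}_p}-\frac{1}{X^{N-i+1}_{N-i+1}-X^{N-i+1}_p}=\frac{X^{N-i}_p-X^{N-i+1}_p}{(X^{N-i+1}_{N-i+1}-X^{N-i}_p)(X^{N-i+1}_{N-i+1}-X^{N-i+1}_p)}$$
combined with the interlacing bound $X^{N-i}_p\le X^{N-i-1}_p\le X^{N-i}_{p+1}$ shows that each bulk term is of size $O(1)\cdot O(N^{-2})$, so the summed bulk drift is concentrated near a deterministic function of $N$ that can be absorbed into $c_i(N)\,t$. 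This kills the bulk drift in the limit SDE for $Z_i^{(k)}$ when $i\le k$. For $i=k+1$ the level below is untracked; the leading singular interaction with $X^{N-k-1}_{N-k-1}$ is then absorbed into the bulk, and the residual after centering becomes the constant drift $\mathrm{d}t$ in the last equation of \eqref{eq_SDE_for_coords}.

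To conclude I would appeal to uniqueness in law for \eqref{eq_SDE_for_coords}, which follows from its triangular structure. The process $Z_{k+1}^{(k)}$ is a Brownian motion with linear drift, hence unique. Given it, the equation for $Z_k^{(k)}$ is a one-dimensional Bessel-type SDE in the gap $Z_k^{(k)}-Z_{k+1}^{(k)}$; since $\beta\ge 4$ gives repulsion coefficient $\beta/2-1\ge 1$, the gap stays strictly positive and Yamada--Watanabe-type arguments yield strong uniqueness. Iterating upward, each $Z_i^{(k)}$ is uniquely determined in law by $Z_{i+1}^{(k)}$. Combined with the initial-data convergence from Theorem~\ref{Theorem_convergence_fixed_time_intro}, this identifies the subsequential limit as the unique solution of \eqref{eq_SDE_for_coords} and yields the claimed equality in law upon taking gaps.

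The hard part will be the rigorous treatment of the bulk sums: although the interlacing identity above controls each term pointwise, showing that stochastic fluctuations do not accumulate to an order-one contribution over unit time intervals requires simultaneous edge-rigidity estimates for the positions of $\beta$-Hermite particles on two adjacent levels. Single-level rigidity results such as those in \cite{BEY} and \cite{BFG} will need to be supplemented with a cross-level coupling argument, most naturally via the joint construction of $X(t;N)$ through \eqref{eq_betaWarren_Intro} with coupled driving Brownian motions.
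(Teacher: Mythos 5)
You take a much harder route than the paper and miss the short proof. Having established Theorem \ref{Theorem_multitime_convergence}, the argument is essentially one sentence: $(R_1,\ldots,R_k)$ is the unique weak solution of the closed system \eqref{eq_SDE_for_diffs} started from the product Gamma law; a direct computation shows that for any weak solution $(Z_1^{(k)},\ldots,Z_{k+1}^{(k)})$ of \eqref{eq_SDE_for_coords} the differences $(Z_1^{(k)}-Z_2^{(k)},\ldots,Z_k^{(k)}-Z_{k+1}^{(k)})$ again weakly solve \eqref{eq_SDE_for_diffs}; and since both systems enjoy weak uniqueness (Theorems \ref{Theorem_uniqueness} and \ref{Theorem_uniqueness_2}), the two processes in the statement have the same law. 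No return to the finite-$N$ random matrix model is needed.

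Your plan instead attempts to re-derive the $(k+1)$-dimensional SDE directly from the multilevel DBM by tracking centered top coordinates, which reproduces in a harder form the martingale-problem analysis of Section \ref{Section_convergence}; the ``hard part'' you defer is precisely Lemmas \ref{lemma_sum_inverse}, \ref{lemma_one_inverse} and \ref{Lemma_sum_inverse_cross_level}, which the paper proves in Section \ref{Section_fixed}. There is also a concrete gap in the centering step: on levels $N-k+1,\ldots,N$ the telescoping of the cross-level and same-level bulk sums makes the bulk drift vanish (so you must take $c_i(N)\to 0$ there), and with your constraint $c_{i+1}(N)-c_i(N)\to 0$ this forces $c_{k+1}(N)\to 0$ as well. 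But by Proposition \ref{Proposition_GS_restriction} and Lemma \ref{lemma_sum_inverse}, the drift of $X^{N-k}_{N-k}(2N/\beta+t;N)$ in its own filtration is $\frac{\beta}{2}\sum_j\big(X^{N-k}_{N-k}-X^{N-k}_j\big)^{-1}$, which converges in $L^1$ to $\sqrt{\beta/(2T_0)}$. This limiting constant is fixed by the model and is not an adjustable parameter; your assertion that ``the residual after centering becomes the constant drift $\mathrm{d}t$'' is unsubstantiated and must instead be computed, giving $\sqrt{\beta/(2T_0)}$ under the normalization used. The clean route is to reuse Theorem \ref{Theorem_multitime_convergence} and the weak uniqueness results directly.
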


\begin{rmk} By passing to the differences in \eqref{eq_SDE_for_coords} one can easily produce a closed system of SDEs for the process $\big(R_1,\,R_2,\,\ldots,\,R_k \big)$. This system is given in Theorem \ref{Theorem_multitime_convergence} and we will
work with it for the most part.
\end{rmk}

\begin{rmk}
Due to the singularities in the drift coefficients, the existence and uniqueness of a weak
solution to the system of SDEs \eqref{eq_SDE_for_coords} and its version for the differences
\eqref{eq_SDE_for_diffs} do not follow directly from classical existence and uniqueness theorems
for SDEs, and we devote Subsection \ref{Section_uniqueness} to the resolution of this question.
\end{rmk}

\begin{rmk} \label{Remark_curious_property}
Theorem \ref{Theorem_multitime_convergence_intro} implies a curious property of the solution to the
system of SDEs \eqref{eq_SDE_for_coords} started with stationary Gamma-distributed differences: for
any $1\le \ell\le m\le k$ the distribution of the process of $(m-\ell+1)$ differences
$$
\big(Z_\ell^{(k)}-Z_{\ell+1}^{(k)},\,Z^{(k)}_{\ell+1}-Z^{(k)}_{\ell+2},\,\ldots,\,Z_m^{(k)}-Z_{m+1}^{(k)}\big)
$$
depends only on $(m-\ell)$ and, in particular, does not depend on $k$. In Section
\ref{Section_Limit_properties} we discuss an alternative way to approach this property.
\end{rmk}

\begin{rmk}
Informally, one would like to say that the \emph{infinite-dimensional} process
$\big(R_1,\,R_2,\,\ldots\big)$ is given by the differences between consecutive coordinates in the
infinite system of SDEs
\begin{equation} \label{eq_SDE_for_coords_inf}
 \mathrm{d}Z_i(t)=\frac{(\beta/2-1)\,\mathrm{d}t}{Z_i(t)-Z_{i+1}(t)}+\mathrm{d}B_i(t),\quad i=1,2,\ldots
\end{equation}
started from an initial condition where the differences are i.i.d.\ Gamma distributed with the
density of \eqref{eq_Gammas}. We make this interpretation rigorous by building such a solution of
\eqref{eq_SDE_for_coords_inf} (and, hence, also the corresponding process of differences) by
relying on a \textit{consistent} sequence of solutions to \eqref{eq_SDE_for_coords} with growing
values of $k$.
\end{rmk}

We want to emphasize that, in sharp contrast to \eqref{eq_betaWarren_Intro}, the interaction terms
in the systems of SDEs \eqref{eq_SDE_for_coords} and \eqref{eq_SDE_for_coords_inf} are of a
\textit{local form}, in the sense that each coordinate interacts only with its immediate neighbor.
As explained in \cite[Section 5.2]{OO} the solutions of \eqref{eq_SDE_for_coords} and
\eqref{eq_SDE_for_coords_inf} should be thought of as Brownian versions of suitable totally
asymmetric exclusion processes.

\medskip

Although Theorem \ref{Theorem_multitime_convergence_intro} is restricted to $\beta\ge 4$, it is
enlightening to consider the case $\beta=2$ as well. For $\beta=2$ the process $X(t;N)$, $t\ge0$ is
given by the Warren process of \emph{interlacing reflecting} Brownian motions introduced in
\cite{Warren} (for an explanation we refer to \cite{GS-TASEP}, \cite{GS} where we have shown that
both the solutions to \eqref{eq_betaWarren_Intro} and the Warren process arise in the scaling limit
of certain  discrete interacting particle systems whose jumps rates depend analytically on
$\beta>0$). The case of $\beta=2$, that is of the Warren process, is special in that its
restriction to the rightmost particles $\big(X^1_1(t;N),X^2_2(t;N),\ldots,X^N_N(t;N)\big)$, $t\ge0$
is already a Markov process for all finite $N$. This process is the Brownian version of the
classical TASEP which was introduced by Glynn and Whitt in \cite{GW}. It is defined inductively:
$X_1^1(t;N)$, $t\ge0$ is a standard Brownian motion, whereas for $i>1$, the process $X_i^i(t;N)$,
$t\ge0$ is an independent standard Brownian motion \emph{reflected} on the trajectory of the
process $X_{i-1}^{i-1}(t;N)$, $t\ge0$, so that $X_i^i(t;N)\ge X_{i-1}^{i-1}(t;N)$ for all $t$. As
one would expect, this process can be obtained as a diffusive scaling limit of the classical TASEP,
see \cite{GW}.

\medskip

For the classical TASEP itself analogues of our Theorems
\ref{Theorem_convergence_fixed_time_intro}, \ref{Theorem_multitime_convergence_intro} and
\ref{Theorem_multitime_convergence_intro_2} are known. Indeed, for large times the classical TASEP
converges locally to its stationary translation invariant version on $\mathbb Z$, see \cite{Rost}.
The stationary distribution depends on a parameter $p$: at each lattice point of $\mathbb Z$ there
is a particle with probability $p$, independently of all other lattice points. This means that the
spacings between consecutive particles are i.i.d.\ geometrically distributed which is precisely
the discrete space analogue of the i.i.d.\ exponentially distributed spacings in the $\beta=2$
version of Theorem \ref{Theorem_convergence_fixed_time_intro}.

\smallskip

Further, the evolution of a
single \emph{tagged} particle in the stationary TASEP is itself a Markov chain and the jump rates are functions of the parameter $p$ %this result is attributed to Kesten
(see \cite[Example 3.2]{Spitzer} and \cite[Chapter VII, Corollary 4.9]{Liggett}, and in addition
\cite[Introduction]{Kipnis} for the connection with Burke's Theorem on queues arranged in series).
This fact implies that the evolution of any number $k$ of spacings between adjacent particles is a
Markov chain of birth-and-death type. Our Theorems \ref{Theorem_multitime_convergence_intro},
\ref{Theorem_multitime_convergence_intro_2} contain a continuous general $\beta$ version of this
property.

\bigskip

The rest of the article is organized as follows. In Section \ref{Section_setup} we give the
definitions related to the process $X(t;N)$, $t\ge0$. Section \ref{Section_fixed} is devoted to the
study of the asymptotic behavior of the fixed time distributions of that process and to the proof
of Theorem \ref{Theorem_convergence_fixed_time_intro}. In this section we rely on various
previously known results from random matrix theory, such as the Wigner semi-circle law for
$\beta$-Hermite ensembles and large deviations estimates both in the bulk and at the edge of the
spectrum. In Section \ref{Section_convergence} we prove Theorems
\ref{Theorem_multitime_convergence_intro} and \ref{Theorem_multitime_convergence_intro_2}. The
convergence is proved via martingale problem techniques in the spirit of Stroock and Varadhan. In
the same section we also prove weak uniqueness for the systems of SDEs \eqref{eq_SDE_for_coords}
and \eqref{eq_SDE_for_diffs}. Our argument is based on a (local in time) Girsanov change of measure
which locally reduces our interacting particle system to a system of non-interacting Bessel
processes. Finally, in Section \ref{Section_Limit_properties} we outline how techniques from the
theory of semigroups and linear evolution equations can be used to provide independent proofs for
the properties of the solutions to \eqref{eq_SDE_for_coords} and \eqref{eq_SDE_for_diffs} discussed
in Remark \ref{Remark_curious_property} above.

\medskip

\noindent {\bf Acknowledgement.} We would like to thank Alexei Borodin, Paul Bourgade, Amir Dembo, Alice Guionnet, Sasha Sodin and Ofer Zeitouni for many fruitful discussions. In particular, we thank Alice Guionnet and Ofer Zeitouni for showing us the integration by parts trick employed in the proof of Lemma \ref{Lemma_inverse_expectation}. V.~G.\ was partially supported by the NSF grant DMS-1407562.

\section{Preliminaries}

\label{Section_setup}

We start by recalling the definition of the Gelfand--Tseitlin cone $\GG$ in \eqref{GTcone} and by introducing the Hermite $\beta$ corners process in the following definition.

\begin{definition}\label{def_betacorner} The Hermite $\beta>0$ corners process of variance $t>0$ is the probability distribution on $\GG$ whose density (with respect to the Lebesgue measure) is proportional to
\begin{equation}
\label{eq_beta_Hermite_corners}
 \prod_{i<j} (x_j^N-x_i^N)
 \prod_{i=1}^N \exp\left(- \frac{(x_i^N)^2}{2t}\right)
 \prod_{k=1}^{N-1} \prod_{1\le i<j\le k} (x_j^k-x_i^k)^{2-\beta} \prod_{a=1}^k \prod_{b=1}^{k+1} |x^k_a-x^{k+1}_b|^{\beta/2-1}.
\end{equation}
\end{definition}

\smallskip

For $\beta=1$, $2$ and $4$ the Hermite $\beta$ corners process arises as the joint distribution of
the eigenvalues of a Hermitian Gaussian random matrix and its corners with real (GOE), complex
(GUE) and quaternion entries (GSE), respectively. We refer to \cite{Neretin} for a proof and to
\cite[Introduction]{GS} for a more detailed discussion.

\medskip

In \cite{GS} we had introduced for every fixed $\beta>0$ and $N=1,2,\ldots$ a stochastic process
$X(t;N)$, $t\ge0$ taking values in $\GG$ (this process was called $Y^{mu}(t)$, $t\ge0$ in
\cite[Introduction]{GS} and the $\theta$ there equals to $\beta/2$ here). We have shown in
\cite{GS} that, for any $\beta>0$ and $t>0$ the distribution of $X(t;N)$ is given by the Hermite
$\beta$ corners process of variance $t$. Throughout this article we focus on the case $\beta \ge
4$ in which the following theorem can be taken as the definition of $X(t;N)$, $t\ge0$.

\begin{theorem}[\cite{GS}] \label{Theorem_GS_multilevel} For every $\beta \ge 4$ and $N=1,2,\dots$ the system of SDEs
\begin{equation}
\label{eq_betaWarren} \mathrm{d}X^k_i(t;N)=\sum_{j=1}^{k-1} \frac{(\beta/2-1)\,
\mathrm{d}t}{X^k_i(t;N)-X^{k-1}_j(t;N)} -\sum_{j\neq i} \frac{(\beta/2-1)\,
\mathrm{d}t}{X^k_i(t;N)-X^k_j(t;N)}+\mathrm{d}W^k_i(t),\quad 1\leq i\leq k\leq N,
\end{equation}
 has a unique weak solution taking values in $\GG$, such that for each $t\ge 0$ the distribution of $X(t;N)$ is given by the Hermite $\beta$ corners process of variance $t$.
In particular, it has the initial condition $X(0;N)=0$. Here $W^k_i$, $1\leq i\leq k\leq N$ are
i.i.d.\ one-dimensional standard Brownian motions.
\end{theorem}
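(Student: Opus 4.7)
The plan is to split the statement into four sub-tasks: local strong existence and pathwise uniqueness in the open interior $\GGo$ of the Gelfand--Tsetlin cone, global non-collision, identification of the fixed-time marginal with the Hermite $\beta$ corners process of variance $t$, and weak uniqueness in law. On $\GGo$ the drift of \eqref{eq_betaWarren} is smooth and locally Lipschitz while the diffusion matrix is the identity, so classical SDE theory gives a unique strong solution up to the first time $\tau$ that either two particles on the same level coincide or a particle catches one on an adjacent level. The singular initial condition $X(0;N)=0$ at the apex of $\GG$ is handled by starting the solution at points of $\GGo$ approaching the apex and identifying the limit via the Fokker--Planck identification described below.

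The main obstacle is showing $\tau=\infty$ almost surely for $\beta\ge 4$. I would approach this by Bessel-process comparison applied to each type of gap, combined with a global Lyapunov argument. For a same-level gap $D=X^k_{i+1}-X^k_i$, subtracting the two corresponding rows of \eqref{eq_betaWarren} produces, near $D=0$, the SDE
\[
 \mathrm{d}D(t)=\frac{2(\beta/2-1)}{D(t)}\,\mathrm{d}t+\bigl(\text{drift bounded near }D=0\bigr)\mathrm{d}t+\sqrt{2}\,\mathrm{d}\widetilde W(t),
\]
which, after localization, behaves like a Bessel process of dimension $\beta-1\ge 3$ and therefore a.s.\ never reaches zero. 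An analogous calculation for an inter-level gap such as $X^{k-1}_j-X^k_j$ produces singular coefficient $\beta/2-1\ge 1$ against a single Brownian motion, again a Bessel process of dimension $\beta\ge 4$. To handle all gaps simultaneously, I would use a global Lyapunov function such as $V(x)=-\sum \log(x^k_{i+1}-x^k_i)-\sum \log|x^k_i-x^{k-1}_j|$ and verify $\mathcal L V\le C(1+V)$ on $\GGo$, where $\mathcal L$ is the generator of \eqref{eq_betaWarren}; the algebraic cancellations that make this work are precisely those encoded in the logarithmic derivative of the density \eqref{eq_beta_Hermite_corners}. A standard localization argument with stopping times $\tau_n=\inf\{t:V(X(t;N))>n\}$ then yields $\tau=\infty$.

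Once non-collision is established, I would identify the fixed-time marginal by verifying directly that the density $\rho_t$ of the Hermite $\beta$ corners process of variance $t$ in \eqref{eq_beta_Hermite_corners} satisfies the Kolmogorov forward equation $\partial_t\rho_t=\mathcal L^*\rho_t$ on $\GGo$ together with $\rho_t\to\delta_0$ as $t\downarrow 0$. The computation is explicit: the logarithmic derivatives of the Vandermonde and inter-level factors in \eqref{eq_beta_Hermite_corners} reproduce the singular drifts in \eqref{eq_betaWarren}, so $\mathcal L^*\rho_t$ collapses to $\tfrac12\Delta$ applied to the Gaussian factor, which equals $\partial_t\rho_t$. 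A conceptually cleaner route is induction on $N$, using that the top row alone solves Dyson Brownian motion with the correct Hermite marginal, combined with the consistency between the $\beta$ corners processes on $N$ and $N-1$ levels; this simultaneously produces a solution starting from $0$.

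Weak uniqueness is then essentially automatic: on every compact sub-cone of $\GGo$ the drift is bounded and smooth, so Girsanov's theorem identifies the law of any candidate solution with Wiener measure weighted by an explicit density, and the martingale problem is well-posed on that sub-cone. The non-collision estimate exhausts $\GGo$ by such sub-cones, and the entry-from-$0$ estimate (a Bessel process of dimension $\ge 3$ leaves $0$ instantly) completes the argument globally, giving both existence and uniqueness in law of $X(t;N)$ for $t\ge 0$.
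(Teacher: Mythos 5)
This theorem is quoted from \cite{GS}, where the process $X(t;N)$ is constructed as a diffusive scaling limit of a discrete Markov dynamics on interlacing arrays built from Jack polynomials, and the SDE description \eqref{eq_betaWarren} together with the corners-process marginal are then extracted from that construction; the present paper therefore contains no proof of this statement for you to match. Your direct SDE-theoretic route is a genuinely different strategy, but the non-collision step --- the place where the hypothesis $\beta\ge4$ enters --- does not hold up as written.

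For the same-level gap $D=X^k_{i+1}-X^k_i$, subtracting the two lines of \eqref{eq_betaWarren} produces singular drift $-\frac{2(\beta/2-1)}{D}$, not $+\frac{2(\beta/2-1)}{D}$: the intra-level interaction carries a minus sign and is \emph{attractive}. The repulsion that keeps the gap open comes from the interlacing particle $X^{k-1}_i$ squeezed between $X^k_i$ and $X^k_{i+1}$, whose inter-level terms also blow up as $D\to0$ and so cannot be swept into a ``drift bounded near $D=0$''; your localization therefore never applies. By interlacing the correct reduction is to control only the inter-level gaps $D=X^{k-1}_i-X^k_i$, but there you also miscount: the driving noise $W^{k-1}_i-W^k_i$ is $\sqrt2$ times a standard Brownian motion, so after rescaling $D$ compares to a Bessel process of dimension $\beta/2$, not $\beta$. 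The hypothesis $\beta\ge4$ thus lands you precisely at the critical Bessel dimension $2$, which is the borderline case for non-attainability of the origin --- this is exactly the threshold computation that reappears in Lemma~\ref{lemma_no_zero} for the limiting system. Being at the critical dimension makes the asserted Lyapunov bound $\mathcal{L} V\le C(1+V)$ for $V=-\sum\log(\text{gaps})$, and the entrance from the degenerate apex $X(0;N)=0$, the actual technical content of the argument, and both are left as unverified assertions in your sketch. With these repairs the plan might yield an alternative proof; as written, the core non-collision step fails.
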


\smallskip

This theorem immediately implies that, for any $1\le k\le N$ the restriction of the process
$X(t;N)$, $t\ge0$ to the first $k$ levels, that is to the coordinates $x_i^j$, $1\le i \le j \le
k$, has the same law as the process $X(t;k)$, $t\ge0$. A more delicate restriction property is a
part of the following proposition.

\begin{proposition}[\cite{GS}] \label{Proposition_GS_restriction}
For every fixed $1\le k\le N$ the restriction of the process $X(t;N)$, $t\ge0$ to the $k$-th level, that is to the coordinates $x_i^j$, $1\le i \le k$, has the law of a $\beta$ Dyson Brownian motion. In other words, it admits the semimartingale decomposition
$$
\mathrm{d}X^k_i(t;N)=\sum_{j\neq i} \frac{(\beta/2)\,\mathrm{d}t}{X^k_i(t;N)-X^k_j(t;N)}+\mathrm{d}B_i(t),\quad 1\leq i\leq k
$$
in its own filtration, with $B_i$, $1\le i \le k$ being i.i.d.\ one-dimensional standard Brownian
motions. In particular, for any $t>0$ the distribution of the random vector
$\big(X^k_1(t),\,X^k_2(t),\,\ldots,\,X^k_k(t)\big)$ on \eq\label{wedge}
\overline{\mathcal{W}^k}:=\big\{x\in\rr^k:\;x_1\le x_2\le\cdots\le x_k\big\} \en has a density
with respect to the Lebesgue measure proportional to
\begin{equation}
\label{eq_time_t_DBM} \prod_{1\le i<j \le k} (x^k_j-x^k_i)^\beta \prod_{i=1}^k
\exp\left(-\frac{(x_i^k)^2}{2 t} \right).
\end{equation}
\end{proposition}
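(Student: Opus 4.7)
The plan is to project the level-$k$ component of the SDE system \eqref{eq_betaWarren} onto the filtration $\mathcal F^k_t:=\sigma(X^k(s):0\le s\le t)$ and identify the resulting semimartingale decomposition. The martingale part comes essentially for free: the pathwise quadratic covariations $\langle X^k_i,X^k_j\rangle_t=\delta_{ij}t$ are unchanged under filtration shrinkage, so by L\'evy's characterization the new martingale parts in $\mathcal F^k_t$ are independent standard Brownian motions $B_1,\ldots,B_k$. All the content therefore lies in computing the drift. By the standard optional-projection formula for the passage of a semimartingale to a smaller filtration, the $\mathcal F^k_t$-drift of $X^k_i$ equals
\[
\widehat A^k_i(s) \;=\; (\beta/2-1)\,\ev\!\left[\sum_{j=1}^{k-1}\frac{1}{X^k_i(s)-X^{k-1}_j(s)}\,\Big|\,\mathcal F^k_s\right] - (\beta/2-1)\sum_{j\ne i}\frac{1}{X^k_i(s)-X^k_j(s)},
\]
so everything reduces to evaluating the conditional expectation in the first term.

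I would first identify the \emph{static} conditional law of $X^{k-1}(s)$ given $X^k(s)=x$. The restriction property combined with Theorem~\ref{Theorem_GS_multilevel} says that the time-$s$ joint distribution of levels $1,\ldots,k$ is the $k$-level Hermite $\beta$ corners process of Definition~\ref{def_betacorner}. Marginalizing this density over the lower levels $x^1,\ldots,x^{k-2}$---the inner integral being evaluated by comparing with the $(k-1)$-level corners process, whose top-level $\beta$-Hermite marginal is explicit---and dividing by the $\beta$-Hermite marginal on level $k$, one obtains the Dixon--Anderson density
\[
p(y\mid x)\;\propto\;\prod_{a<b}(y_b-y_a)\,\prod_{a,b}|y_a-x_b|^{\beta/2-1}
\]
on the interlacing cone $x_1\le y_1\le x_2\le\cdots\le x_k$, whose $y$-normalization, as a function of $x$, is proportional to $\prod_{b<b'}(x_{b'}-x_b)^{\beta-1}$. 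Computing $\partial\log Z(x)/\partial x_i$ both directly from this closed form and by differentiation under the integral sign yields the key identity
\[
(\beta/2-1)\,\ev\!\left[\sum_{a=1}^{k-1}\frac{1}{x_i-y_a}\,\Big|\,X^k(s)=x\right] \;=\; (\beta-1)\sum_{j\ne i}\frac{1}{x_i-x_j},
\]
with boundary terms in the implicit integration by parts vanishing thanks to the hypothesis $\beta\ge 4$. Substituting this identity into $\widehat A^k_i(s)$ collapses it precisely to $(\beta/2)\sum_{j\ne i}1/(X^k_i(s)-X^k_j(s))$, the $\beta$-Dyson drift claimed by the proposition.

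The main obstacle in the above scheme is replacing the path-level conditioning $\ev[\,\cdot\,|\,\mathcal F^k_s]$ by the fixed-time conditioning $\ev[\,\cdot\,|\,X^k(s)]$: the Dixon--Anderson identity controls only the latter. Equivalently, one needs to show that $X^{k-1}(s)$ is conditionally independent of $\mathcal F^k_s$ given $X^k(s)$, or, stated operator-theoretically, that the transition semigroup of $X(\cdot;N)$ intertwines with the $\beta$-Dyson Brownian semigroup through the Dixon--Anderson Markov kernel. This is the general-$\beta$ analogue of the Rogers--Pitman / Warren intertwining used in the $\beta=2$ Warren process, and its verification is the technical heart of the argument; once in place, $\widehat A^k_i(s)$ becomes a function of $X^k(s)$ alone, the Dixon--Anderson identity applies, and the $\beta$-Dyson semimartingale decomposition follows. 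Finally, the explicit fixed-time density \eqref{eq_time_t_DBM} of $(X^k_1(t),\ldots,X^k_k(t))$ on $\overline{\mathcal{W}^k}$ is read off immediately from the $\beta$-Hermite marginal of the corners process at level $k$.
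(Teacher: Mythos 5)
First, note that this paper does not prove Proposition \ref{Proposition_GS_restriction} at all: it is imported from \cite{GS}, where it is obtained from the very construction of $X(t;N)$ as a scaling limit of discrete Jack-process dynamics built via Diaconis--Fill intertwinings, so that the single-level Markovianity with Dyson generator comes out of the construction rather than from projecting the SDEs \eqref{eq_betaWarren}. So there is no in-paper proof to compare with, and your attempt must be judged on its own.

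Your computational skeleton is correct: the conditional law of level $k-1$ given level $k$ at a fixed time is indeed the Dixon--Anderson kernel (compare \eqref{eq_2_level_density}, obtained by integrating \eqref{eq_beta_Hermite_corners}), its normalization is $\propto\prod_{b<b'}(x_{b'}-x_b)^{\beta-1}$, the log-derivative/integration-by-parts identity
$(\beta/2-1)\,\ev\big[\sum_a (x_i-y_a)^{-1}\,\big|\,X^k(s)=x\big]=(\beta-1)\sum_{j\neq i}(x_i-x_j)^{-1}$
holds (boundary terms vanish for $\beta>2$), and substituting it does collapse the drift to $(\beta/2)\sum_{j\neq i}(x_i-x_j)^{-1}$. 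The genuine gap is exactly the step you flag and then do not supply: replacing conditioning on the path $\mathcal F^k_s$ by conditioning on $X^k(s)$, i.e.\ the conditional independence of $X^{k-1}(s)$ from $\mathcal F^k_s$ given $X^k(s)$ (equivalently, the intertwining of the multilevel semigroup with the Dyson semigroup through the Dixon--Anderson kernel). This is not a routine technicality: it is essentially equivalent to the proposition itself (it encodes that the level-$k$ path carries no extra information about the lower level beyond its current position), and nothing in Theorem \ref{Theorem_GS_multilevel} as stated gives it to you; in \cite{GS} it is secured by the discrete intertwined construction and a passage to the limit, not by an after-the-fact projection argument. A secondary, more minor point: the filtration-shrinkage step (optional projection of the drift plus L\'evy's characterization of the new martingale part) needs an integrability check on $\sum_j (X^k_i-X^{k-1}_j)^{-1}$, which you do not address, though for $\beta\ge4$ this is plausible. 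As written, then, your argument is a correct reduction of the proposition to the intertwining property, but not a proof of it.
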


\section{Analysis of the fixed time distribution}

\label{Section_fixed}

In this section we prove several asymptotic properties, as $N\to\infty$, for the distribution of
the random vector $X(t;N)$ at a fixed time $t>0$, or in other words for the Hermite $\beta$ corners
process of Definition \ref{def_betacorner}. The main results of this section are the following four
(related) statements. Note that while the dynamic results of Theorem
\ref{Theorem_multitime_convergence} below are restricted to the case $\beta\ge 4$, we allow for any
$\beta\ge 1$ throughout this section. It is plausible that certain statements of this section
remain true for $0<\beta<1$ as well, but we do not address this case here. Throughout this section
we write $C$ and $c$ for positive constants whose values might change from line to line.

\begin{lemma} \label{lemma_sum_inverse}
For any fixed $\beta\ge 1$, $T_0>0$ and $t\ge0$:
\begin{equation} \label{eq_x13}
\lim_{N\to\infty} \; \ev\bigg[\,\bigg|\,\sum_{i=1}^{N-1}\frac{1}{X_N^N(NT_0+t;N)-X_{i}^N(NT_0+t;N)}- \sqrt{\frac{2}{\beta T_0}} \bigg|\,\bigg]=0.
\end{equation}
Moreover, the convergence is uniform on compact sets in $T_0$ and $t$.
\end{lemma}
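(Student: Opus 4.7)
The plan is to combine a Gaussian integration by parts identity on the $\beta$-Hermite density, which computes $\ev[S_N]$ exactly, with a bulk/edge decomposition of $S_N$ controlled by the Wigner semicircle law. Writing $S_N$ for the sum on the left hand side of \eqref{eq_x13}, by Proposition~\ref{Proposition_GS_restriction} the top-level vector $\bigl(X_1^N(s;N),\dots,X_N^N(s;N)\bigr)$ at time $s=NT_0+t$ has density on $\WW$ proportional to $\prod_{i<j}(x_j-x_i)^\beta\prod_i e^{-x_i^2/(2s)}$, whose logarithmic derivative in $x_N$ is $\partial_{x_N}\log p=-x_N/s+\beta\sum_{j<N}(x_N-x_j)^{-1}$. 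The boundary terms for integration by parts in $x_N$ vanish (Gaussian decay at $x_N\to\infty$ and the factor $(x_N-x_{N-1})^\beta$ at $x_N\downarrow x_{N-1}$ for $\beta>0$), and testing against $\phi\equiv 1$ yields
\begin{equation}\label{eq:plan-ibp}
\ev\!\left[\sum_{j=1}^{N-1}\frac{1}{X_N^N(s;N)-X_j^N(s;N)}\right]=\frac{\ev[X_N^N(s;N)]}{\beta s}.
\end{equation}
The edge law of large numbers for $\beta$-Hermite ensembles gives $\ev[X_N^N(s;N)]/N\to\sqrt{2\beta T_0}$ as $N\to\infty$ with $s/N\to T_0$ (the $L^1$ uniformity is obtained by combining the a.s.\ convergence with uniform integrability from standard edge tail bounds), so the right hand side of \eqref{eq:plan-ibp} tends to $\sqrt{2/(\beta T_0)}$, and it then suffices to concentrate $S_N$ around its mean.

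Fix a macroscopic cutoff $\delta>0$ and split $S_N=S_N^{\mathrm{bulk}}(\delta)+S_N^{\mathrm{edge}}(\delta)$ according to whether the denominator $X_N^N-X_j^N$ exceeds $\delta N$ or not. Deterministically $0\le S_N^{\mathrm{bulk}}(\delta)\le 1/\delta$ (at most $N$ summands, each of size $\le(\delta N)^{-1}$). Writing $y_j=X_j^N/N$, the Wigner semicircle law for $\beta$-Hermite ensembles provides weak convergence in probability of $\frac{1}{N}\sum_j\delta_{y_j}$ to the semicircle $\mu_{sc}$ on $[-a,a]$ with $a:=\sqrt{2\beta T_0}$, while the edge LLN gives $y_N\to a$ in probability. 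Since $\mu_{sc}$ is absolutely continuous, a standard weak-convergence argument applied to the uniformly bounded functional $(z,\mu)\mapsto\int(z-y)^{-1}\mathbf{1}_{z-y\ge\delta}\,d\mu(y)$ yields $S_N^{\mathrm{bulk}}(\delta)\to c(\delta):=\int_{y\le a-\delta}(a-y)^{-1}\,d\mu_{sc}(y)$ in probability, and the deterministic bound $1/\delta$ upgrades this to $\ev[S_N^{\mathrm{bulk}}(\delta)]\to c(\delta)$. Subtracting from \eqref{eq:plan-ibp},
\[
\ev[S_N^{\mathrm{edge}}(\delta)]=\ev[S_N]-\ev[S_N^{\mathrm{bulk}}(\delta)]\;\longrightarrow\; c-c(\delta),
\]
where $c=\sqrt{2/(\beta T_0)}$, and the square-root vanishing of $\mu_{sc}$ at the edge makes $c-c(\delta)=O(\sqrt\delta)$. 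Since $S_N^{\mathrm{edge}}(\delta)\ge 0$, the triangle inequality gives
\[
\limsup_{N\to\infty}\ev|S_N-c|\le\limsup_N\ev\bigl|S_N^{\mathrm{bulk}}(\delta)-c(\delta)\bigr|+\limsup_N\ev[S_N^{\mathrm{edge}}(\delta)]+|c(\delta)-c|=O(\sqrt\delta),
\]
and letting $\delta\downarrow 0$ completes the proof.

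The main obstacle, as usual at the edge of a random matrix spectrum, is handling the potentially large near-edge summands $(X_N^N-X_{N-k}^N)^{-1}$ for small $k$. The identity \eqref{eq:plan-ibp} resolves this elegantly: it gives the expectation of the \emph{total} sum exactly, so the edge contribution's expectation is obtained by \emph{subtracting} the (easily analyzed) bulk limit, bypassing any need for fine fluctuation information about the top $\beta$-Hermite eigenvalues. Uniformity in $(T_0,t)$ on compact sets then follows because every estimate above is continuous in $T_0$, and $t$ enters only through the shift $s=NT_0+t$, which after division by $N$ contributes a vanishing correction.
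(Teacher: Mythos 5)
Your argument is correct, and it shares with the paper its two essential ingredients: the Gaussian integration-by-parts identity $\ev[S_N]=\ev[X_N^N(s;N)]/(\beta s)$ (this is exactly the paper's Lemma \ref{Lemma_inverse_expectation}, proved the same way, with the mean of the top particle supplied by the Ledoux--Rider estimates) and the Wigner semicircle law \eqref{eq_Wigner_semicircle}. Where you differ is in how these are assembled. The paper first reduces the $L^1$ statement to convergence in probability and proves that in two one-sided steps: a lower bound obtained by replacing $\X_N/N$ with the deterministic point $2+2\delta$ \emph{outside} the spectrum (which regularizes the summands and lets the semicircle law be applied to a bounded continuous test function, cf. \eqref{eq_x10}), and an upper bound obtained by a contradiction argument \eqref{eq_x9} that plays the expectation identity against the lower bound. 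You instead split $S_N$ at the macroscopic scale $\delta N$ into a bulk part, handled by the semicircle law and the deterministic bound $1/\delta$ (hence in $L^1$), and an edge part whose expectation you obtain by \emph{subtracting} the bulk limit from the exact identity, with the square-root edge vanishing giving an explicit $O(\sqrt{\delta})$ error; nonnegativity of the edge part then closes the triangle inequality. Your route is somewhat more quantitative and avoids the contradiction step, at the price of having to justify weak convergence against the discontinuous cutoff $\mathbf{1}_{\{z-y\ge\delta\}}$ evaluated at the random point $z=X_N^N/N$ — this is indeed standard given the absolute continuity of the semicircle law (smooth the indicator and control the $O(\eta N)$ eigenvalues near the threshold, each contributing $O(1/(\delta N))$), but it is precisely the technicality the paper's shift-by-$\delta$ trick is designed to sidestep. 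Two small points to tighten: the phrase ``a.s.\ convergence'' for $\ev[X_N^N]/N$ is not meaningful across different $N$ — what you need (and what is available, e.g.\ via \eqref{eq_LR_estimates} and the results cited in the paper) is convergence in probability together with uniform integrability, yielding convergence of the means; and the uniformity in $(T_0,t)$ is most cleanly obtained, as in the paper, from the exact diffusive scaling relation for the Hermite $\beta$ ensemble rather than by re-examining each estimate.
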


\begin{rmk}
The $N\to\infty$ limits of the linear statistics similar to the sum in Lemma \ref{lemma_sum_inverse} are typically given by integrals with respect to the Wigner semi-circle law, see e.g. \cite{Mehta}, \cite{AGZ} and \cite{For}. An appropriate integral with respect the semi-circle law gives the correct answer in our case as well, however we need additional arguments due to the singularity of the summands in \eqref{eq_x13}.
\end{rmk}

\begin{lemma} \label{lemma_one_inverse}
For any fixed $\beta\ge 1$, $T_0>0$ and $t\ge0$:
$$
\lim_{N\to\infty} \;\ev \bigg[\frac{1}{X_N^N(NT_0+t;N)-X_{N-1}^N(NT_0+t;N)} \bigg]=0.
$$
Moreover, the convergence is uniform on compact sets in $T_0$ and $t$.
\end{lemma}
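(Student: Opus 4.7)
The plan is to obtain an integration-by-parts identity in the joint density of the top level and then reduce the resulting expression to quantities already controlled by Lemma \ref{lemma_sum_inverse} together with edge rigidity for the $\beta$-Hermite ensemble. Throughout, set $\tau:=NT_0+t$ and $\lambda_i:=X^N_i(\tau;N)$ for $i=1,\ldots,N$. By Proposition \ref{Proposition_GS_restriction}, the vector $(\lambda_1,\ldots,\lambda_N)$ has density on $\overline{\mathcal W^N}$ proportional to $p(\lambda)=\prod_{i<j}(\lambda_j-\lambda_i)^\beta\exp\bigl(-\sum_i \lambda_i^2/(2\tau)\bigr)$.

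The first step is to integrate by parts with respect to $\lambda_{N-1}$ on the interval $[\lambda_{N-2},\lambda_N]$. For $\beta\ge 1$ the density vanishes at both endpoints through the factors $(\lambda_{N-1}-\lambda_{N-2})^\beta$ and $(\lambda_N-\lambda_{N-1})^\beta$, so the boundary contributions vanish and the identity $\ev[\partial_{\lambda_{N-1}}\log p]=0$ produces
$$\beta\,\ev\!\left[\frac{1}{\lambda_N-\lambda_{N-1}}\right]\;=\;\ev\!\left[\sum_{i=1}^{N-2}\frac{\beta}{\lambda_{N-1}-\lambda_i}\right]\;-\;\frac{\ev[\lambda_{N-1}]}{\tau}.$$
I would then show that both terms on the right-hand side converge to the same value $\sqrt{2\beta/T_0}$, which forces the left-hand side to zero.

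For the expectation $\ev[\lambda_{N-1}]/\tau$, edge rigidity for the $\beta$-Hermite ensemble locates the rightmost particle near $N\sqrt{2\beta T_0}$ with fluctuations of order $N^{1/3}$, so the same holds for $\lambda_{N-1}$ and $\ev[\lambda_{N-1}]/\tau\to\sqrt{2\beta/T_0}$; as a consistency check, applying the same IBP identity to $\lambda_N$ and combining with Lemma \ref{lemma_sum_inverse} gives $\ev[\lambda_N]/\tau\to\beta\sqrt{2/(\beta T_0)}=\sqrt{2\beta/T_0}$. For the Cauchy-transform-type sum $\ev[\sum_{i<N-1}\beta/(\lambda_{N-1}-\lambda_i)]$, I would rerun the proof of Lemma \ref{lemma_sum_inverse} with the role of $\lambda_N$ replaced by $\lambda_{N-1}$: convergence of the empirical measure to the semicircle, edge rigidity for $\lambda_{N-1}$, and the taming of the near-singularity at $\lambda_{N-1}=\lambda_{N-2}$ by the $(\lambda_{N-1}-\lambda_{N-2})^\beta$ factor all carry over, since $\lambda_{N-1}$ sits at the spectral edge in exactly the same quantitative sense as $\lambda_N$.

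The main technical obstacle I anticipate is this last point. One is tempted to convert the two sums into each other through the partial-fraction identity
$$\sum_{i<N-1}\!\left(\frac{1}{\lambda_{N-1}-\lambda_i}-\frac{1}{\lambda_N-\lambda_i}\right)=(\lambda_N-\lambda_{N-1})\sum_{i<N-1}\frac{1}{(\lambda_N-\lambda_i)(\lambda_{N-1}-\lambda_i)}$$
and thereby quote Lemma \ref{lemma_sum_inverse} directly; but this reduction is essentially circular, since estimating the right-hand side in expectation requires a priori control of $\lambda_N-\lambda_{N-1}$, which is what we are trying to quantify. The honest path is thus to redo the edge/semicircle analysis around $\lambda_{N-1}$, paying particular care to the new near-singularity at $\lambda_{N-1}=\lambda_{N-2}$. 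The asserted uniformity in $T_0$ and $t$ on compact sets is then inherited from the corresponding uniformity in Lemma \ref{lemma_sum_inverse} and in the edge rigidity estimates invoked above.
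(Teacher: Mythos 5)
Your proposal is a genuinely different approach from the paper's: the paper conditions on $\X_1,\ldots,\X_{N-1}$, analyzes the conditional density of $\X_N$ and its ratio under a shift, and then controls the tail events via the large-deviations and rigidity estimates \eqref{eq_LR_estimates}, \eqref{eq_bulk_rigidity}. Your route is instead to integrate by parts in $\lambda_{N-1}$ and show that the two terms on the right-hand side of
$$
\beta\,\ev\!\left[\frac{1}{\lambda_N-\lambda_{N-1}}\right]
=\ev\!\bigg[\sum_{i<N-1}\frac{\beta}{\lambda_{N-1}-\lambda_i}\bigg]-\frac{\ev[\lambda_{N-1}]}{\tau}
$$
have the same limit. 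Unfortunately, there is a genuine circularity that you identify in the partial-fraction version but not in your own plan. Concretely, your identity is equivalent (after dividing by $\beta$) to
$$
\ev\!\bigg[\sum_{i<N-1}\frac{1}{\lambda_{N-1}-\lambda_i}\bigg]
=\ev\!\left[\frac{1}{\lambda_N-\lambda_{N-1}}\right]+\frac{\ev[\lambda_{N-1}]}{\beta\tau},
$$
so the Cauchy-transform expectation at $\lambda_{N-1}$ equals the unknown quantity $\ev[1/(\lambda_N-\lambda_{N-1})]$ plus a term $\to 1$ (in the paper's normalization). Hence showing $\ev[\sum_{i<N-1}1/(\lambda_{N-1}-\lambda_i)]\to 1$ is logically equivalent to showing $\ev[1/(\lambda_N-\lambda_{N-1})]\to 0$; there is nothing to subtract.

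Your proposed fix — rerun the proof of Lemma \ref{lemma_sum_inverse} with $\lambda_{N-1}$ in place of $\lambda_N$ — does not escape this. That proof has three ingredients: (a) the IBP computation of the mean (Lemma \ref{Lemma_inverse_expectation}), (b) the in-probability lower bound from the semicircle law \eqref{eq_x8}, and (c) the in-probability upper bound \eqref{eq_x9}, which is proved \emph{by contradiction against (a)}. In the $\lambda_{N-1}$ version, (a) yields exactly the display above, which already contains $\ev[1/(\lambda_N-\lambda_{N-1})]$; without knowing this term vanishes, (a) only gives $\ev[\sum_{i<N-1}]\ge 1+o(1)$, which is useless for (c). The semicircle argument (b) by itself only produces a lower bound in probability and cannot rule out the extra contribution. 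The "taming of the near-singularity by the $(\lambda_{N-1}-\lambda_{N-2})^\beta$ factor" that you invoke is precisely the quantitative estimate one must produce — it is not inherited from Lemma \ref{lemma_sum_inverse} — and establishing it is essentially the content of this lemma. The paper resolves this by working with the conditional density $f(x;y_1,\ldots,y_{N-1})$ of $\X_N$ given the other eigenvalues (equations \eqref{eq_f_density_def}--\eqref{eq_x7}), comparing it to a shifted version and thereby bounding $\ev[1/(\X_N-\X_{N-1})\,|\,A\cap B]$ directly, with tail events handled separately via \eqref{eq_LR_estimates}, \eqref{eq_bulk_rigidity}. Some such non-circular input is needed; the IBP identity alone cannot supply it.
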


\begin{rmk}
In fact, $N^{-1/3}\big(X_N^N(NT_0+t;N)-X_{N-1}^N(NT_0+t;N)\big)$ converges in law to a limiting
random variable (see \cite{RRV}) which explains why Lemma \ref{lemma_one_inverse} should hold.
However, the singularity of the function $x\mapsto\frac{1}{x}$ does not allow to apply the result
of \cite{RRV} here directly and we need additional arguments. Estimates similar to Lemma
\ref{lemma_one_inverse} can be found in \cite{BEY}, but the exact statement we need is also not
available there.
\end{rmk}

\begin{lemma} \label{Lemma_sum_inverse_cross_level}
For any fixed $\beta\ge 1$, $T_0>0$ and $t\ge0$:
\begin{equation}
\lim_{N\to\infty} \; \ev\bigg[\,\bigg|\,\sum_{i=1}^{N-2}\frac{1}{X_N^N(NT_0+t;N)-X_{i}^{N-1}(NT_0+t;N)}- \sqrt{\frac{2}{\beta T_0}} \bigg|\,\bigg]=0.
\end{equation}
Moreover, the convergence is uniform on compact sets in $T_0$ and $t$.
\end{lemma}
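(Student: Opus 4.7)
The plan is to compare the cross-level sum to the same-level analogue already controlled in Lemma \ref{lemma_sum_inverse}, exploiting the interlacing inherent to $\GG$, and then to absorb the single boundary term that survives by means of Lemma \ref{lemma_one_inverse}.

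Concretely, I fix $s=NT_0+t$ and, suppressing $s$ and $N$ from the notation, set
\[
A_N:=\sum_{i=1}^{N-2}\frac{1}{X_N^N-X_i^{N-1}},\qquad B_N:=\sum_{i=1}^{N-1}\frac{1}{X_N^N-X_i^N}.
\]
The defining inequalities of $\GG$ in \eqref{GTcone} force $X_i^N\le X_i^{N-1}\le X_{i+1}^N$, whence
\[
\frac{1}{X_N^N-X_i^N}\ \le\ \frac{1}{X_N^N-X_i^{N-1}}\ \le\ \frac{1}{X_N^N-X_{i+1}^N},\qquad 1\le i\le N-2.
\]
Summing over $i$ and using $\sum_{i=1}^{N-2}(X_N^N-X_i^N)^{-1}=B_N-(X_N^N-X_{N-1}^N)^{-1}$ together with $\sum_{i=1}^{N-2}(X_N^N-X_{i+1}^N)^{-1}=B_N-(X_N^N-X_1^N)^{-1}$, one obtains the almost sure sandwich
\[
0\ \le\ \frac{1}{X_N^N-X_1^N}\ \le\ B_N-A_N\ \le\ \frac{1}{X_N^N-X_{N-1}^N}.
\]

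Taking expectations, the right-hand side tends to $0$ uniformly on compact sets of $(T_0,t)$ by Lemma \ref{lemma_one_inverse}, whence $\ev|B_N-A_N|\to 0$ with the same uniformity; combined with Lemma \ref{lemma_sum_inverse}, which furnishes $\ev\bigl|B_N-\sqrt{2/(\beta T_0)}\bigr|\to 0$, the triangle inequality finishes the claim. The heavy lifting lives entirely in the two preceding lemmas, so no serious obstacle arises here. The only point that requires attention is that the summation range in $A_N$ stops at $N-2$: this is exactly what prevents the upper sandwich from producing a forbidden $(X_N^N-X_N^N)^{-1}$ term, and it is what causes $B_N-A_N$ to be bounded above by the extremal spacing $(X_N^N-X_{N-1}^N)^{-1}$, which is precisely the quantity whose expectation is shown to vanish in Lemma \ref{lemma_one_inverse}.
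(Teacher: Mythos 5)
Your argument is correct and is essentially the paper's own proof: both use the interlacing $X_i^N\le X_i^{N-1}\le X_{i+1}^N$ to sandwich the cross-level sum between the same-level sum minus $(X_N^N-X_{N-1}^N)^{-1}$ and the same-level sum minus $(X_N^N-X_1^N)^{-1}$, then invoke Lemmas \ref{lemma_sum_inverse} and \ref{lemma_one_inverse}. Your write-up merely makes the index bookkeeping explicit; no changes needed.
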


\begin{theorem}\label{Theorem_convergence_fixed_time}
For any fixed $\beta\ge 1$, $T_0>0$, $t\ge0$ and $k=1,\,2,\,\ldots$ the $k$-dimensional random vectors
\begin{multline*}
\Big( X_N^N(NT_0+t;N)-X_{N-1}^{N-1}(NT_0+t;N),\,
 X_{N-1}^{N-1}(NT_0+t;N)-X_{N-2}^{N-2}(NT_0+t;N),\,\ldots,\\
 X_{N-k+1}^{N-k+1}(NT_0+t;N)-X_{N-k}^{N-k}(NT_0+t;N) \Big)
\end{multline*}
converge in distribution in the limit $N\to\infty$ to a random vector with i.i.d.\ Gamma
distributed components of probability density
$$
\frac{1}{\Gamma(\beta/2)}\bigg(\frac{\beta}{2T_0}\bigg)^{\beta/4}\,x^{\beta/2-1}\,e^{-\sqrt{\frac{\beta}{2T_0}}x}.
$$
\end{theorem}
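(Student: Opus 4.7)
The plan is to derive the joint conditional density of the spacings $(s_1,\ldots,s_k) := (X^N_N - X^{N-1}_{N-1},\,\ldots,\,X^{N-k+1}_{N-k+1} - X^{N-k}_{N-k})$ given $y_0 := X^N_N$ and the collection $Z$ of all remaining particles on levels $N-k,\ldots,N$, starting from the explicit Hermite $\beta$-corners density \eqref{eq_beta_Hermite_corners}, and to show that this conditional density converges to a product of independent Gamma densities.

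First I would integrate out levels $1,\ldots,N-k-1$ using a classical Selberg-type interlacing identity whose value is $\prod_{i<j}(x^{N-k}_j-x^{N-k}_i)^{\beta-1}$ (up to a constant). Combined with the original $\prod(x^{N-k}_j-x^{N-k}_i)^{2-\beta}$ factor, this produces an effective Vandermonde exponent of $1$ on the bottom level $N-k$; this asymmetry will ultimately supply the Gamma rate. Writing $y_j := X^{N-j}_{N-j}$ and peeling off the ``Gamma-shape'' factors $|y_{j-1}-y_j|^{\beta/2-1} = s_j^{\beta/2-1}$ coming from the cross products between adjacent edges, I obtain
\begin{equation*}
f(s_1,\ldots,s_k\mid y_0,Z) \;\propto\; \prod_{j=1}^k s_j^{\beta/2-1}\cdot\prod_{j=1}^k G_j(y_j,Z),
\end{equation*}
where each $G_j(y_j,Z) = \prod_x(y_j-x)^{c_x}$ collects the remaining factors involving $y_j$ from the self-Vandermonde on level $N-j$ and from the cross products with the adjacent levels ($N-j\pm 1$ for $1\le j\le k-1$; only $N-k+1$ for $j=k$).

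The heart of the argument is a Taylor expansion of $\log G_j(y_j,Z)$ around $y_j = y_0$. Every particle $x$ appearing in $G_j$ has $y_0 - x$ either of order $N$ (bulk) or of order $N^{1/3}$ (near-edge on an adjacent level, by the Tracy--Widom edge spacing), both of which dominate $y_0 - y_j = s_1+\cdots+s_j = O(1)$, so the expansion is legitimate. Its linear coefficient is a weighted sum of reciprocal sums $\sum_x 1/(y_0 - x)$ over the relevant levels, and each such sum converges in $L^1$ to $L := \sqrt{2/(\beta T_0)}$ by Lemma~\ref{lemma_sum_inverse}, Lemma~\ref{Lemma_sum_inverse_cross_level}, and their immediate analogues for the deeper levels $N-2,\ldots,N-k-1$ (obtainable since the marginal on each of those levels is $\beta$-Hermite of variance $NT_0$ by Proposition~\ref{Proposition_GS_restriction}). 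For $1\le j\le k-1$ the three contributions combine as $(2-\beta)L + 2(\beta/2-1)L = 0$, so $G_j(y_j,Z)/G_j(y_0,Z) \to 1$; for $j = k$ the Vandermonde exponent is instead $1$ and there is no cross-level-down factor, producing $L + (\beta/2-1)L = (\beta/2)L = \sqrt{\beta/(2T_0)} =: \lambda$, so $G_k(y_k,Z)/G_k(y_0,Z) \to e^{-\lambda(y_0-y_k)} = e^{-\lambda(s_1+\cdots+s_k)}$. Substituting back and normalizing, the conditional density converges to $\prod_{j=1}^k\tfrac{\lambda^{\beta/2}}{\Gamma(\beta/2)}\,s_j^{\beta/2-1}e^{-\lambda s_j}$, the desired product of Gammas; since this limit is deterministic, averaging over $(y_0,Z)$ yields the same unconditional limit.

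The main obstacle I expect is uniform error control in the Taylor expansion. The second-order contribution $-\tfrac{1}{2}(y_0-y_j)^2\sum_x c_x/(y_0-x)^2$ has a bulk piece of order $1/N$ by the Wigner semicircle law, and near-edge pieces that vanish by Lemma~\ref{lemma_one_inverse}, its analogues at deeper levels, and the Tracy--Widom edge asymptotics of \cite{RRV}. A standard truncation (interlacing already confines each $s_j$ to an interval of size $O(N^{1/3})$) together with these bounds permits dominated convergence, yielding convergence of both the normalization constant and the conditional density, and hence the theorem.
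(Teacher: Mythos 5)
Your proposal is correct in its core strategy and takes a genuinely different route from the paper's proof. The paper handles the $k=1$ case by conditioning on $\big(\X,\Y_1,\ldots,\Y_{N-2}\big)$ and analyzing the one-dimensional conditional density $f(z)/f(1)$ of the single spacing, and then extends to $k>1$ by \emph{induction}, conditioning at step $m$ on the complete configuration of levels $N-m+1,\ldots,N$ and invoking the two-level density \eqref{eq_2_level_density} again. You instead treat all $k$ spacings simultaneously: you integrate out levels $1,\ldots,N-k-1$ (correctly obtaining, via the Dixon--Anderson integral --- ``Selberg-type'' is a bit imprecise but not wrong --- that the bottom retained level $N-k$ acquires effective Vandermonde exponent $1$ while the intermediate levels keep exponent $2-\beta$), condition on $y_0=X^N_N$ and all non-rightmost particles on levels $N-k,\ldots,N$, and Taylor-expand the log-density jointly in $y_1,\ldots,y_k$. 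The pleasing structural observation that powers this one-shot argument is the exact cancellation $(2-\beta)+2(\beta/2-1)=0$ on every intermediate level, so that only the asymmetric bottom level contributes the rate $(\beta/2)L$; the paper instead picks up the rate separately at each inductive step from the same combination $1+(\beta/2-1)$. The two decompositions assign the Gamma rate to different places but are consistent, and yours makes the mechanism somewhat more transparent.

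A few points need tightening, none fatal. First, the ``immediate analogues'' of Lemmas \ref{lemma_sum_inverse} and \ref{Lemma_sum_inverse_cross_level} for sums $\sum_i 1/(y_0-x^{N-j}_i)$ with $j\ge 2$ are not obtained directly from the marginal on level $N-j$ being $\beta$-Hermite (that controls $\sum_i 1/(x^{N-j}_{N-j}-x^{N-j}_i)$, not the sum anchored at $y_0$); what you actually need is to iterate the interlacing argument from the paper's proof of Lemma \ref{Lemma_sum_inverse_cross_level}, using $\big|\sum_{i\le m-1}1/(y_0-x^m_i)-\sum_{i\le m+1-1}1/(y_0-x^{m+1}_i)\big|\le 1/(y_0-x^{m+1}_m)$ together with the chain of interlacing inequalities $x^{m+1}_m\le x^{m+2}_{m+1}\le\cdots\le x^N_{N-1}$, which reduces the error term to $1/(X^N_N-X^N_{N-1})$ and Lemma \ref{lemma_one_inverse}. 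Second, the error control you sketch (second-order Taylor, truncation, dominated convergence) is the right idea but needs the explicit integrable majorant: the paper obtains one from the elementary inequality $1-u\le e^{-u}$, giving $f(z)/f(1)\le z^{\beta/2-1}e^{-(z-1)/C}$, and works on a high-probability set $D(N)$ where the relevant reciprocal sums are close to their limits; your appeal to \cite{RRV} for the near-edge quadratic pieces is not needed and somewhat out of place --- the paper gets by with Ledoux--Rider estimates and Lemma \ref{lemma_one_inverse}, and so can you. With these repairs the argument goes through.
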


The rest of this section is devoted to the proofs of Lemmas \ref{lemma_sum_inverse}, \ref{lemma_one_inverse} and \ref{Lemma_sum_inverse_cross_level}, and to that of Theorem \ref{Theorem_convergence_fixed_time}. We start by noting that the Hermite $\beta$ corners process is invariant under a diffusive rescaling of time and space, which can be seen immediately from \eqref{eq_beta_Hermite_corners}. Therefore, we have the following equality in distribution:
\eq\label{eq_in_dist}
 X(NT_0+t;N)\stackrel{d}{=}X\left(\frac{2N}{\beta};N\right)\cdot\bigg(\frac{ \beta T_0+\beta\frac{t}{N}}{2}\bigg)^{1/2}.
\en Consequently, it suffices to consider the case $T_0=2/\beta$, $t=0$ in Lemmas
\ref{lemma_sum_inverse}, \ref{lemma_one_inverse}, and \ref{Lemma_sum_inverse_cross_level}, and in
Theorem \ref{Theorem_convergence_fixed_time}. With this choice, the constant $\sqrt{\frac{2}{\beta
T_0}}$ in Lemma \ref{lemma_sum_inverse} is equal to $1$, and the constant
$\sqrt{\frac{\beta}{2T_0}}$ in Theorem \ref{Theorem_convergence_fixed_time} equals $\beta/2$. To
simplify the notation we also define
$$
\X_i:=X^N_i\left(\frac{2N}{\beta};N\right), \quad i=1,\,2,\,\ldots,\,N.
$$
Note that the distribution of the random vector $\X:=(\X_1,\,\X_2,\,\ldots\,,\X_N)$ is given by
\eqref{eq_time_t_DBM} with $t=2N/\beta$ and $k=N$.

\medskip

Next, we recall several known asymptotic properties of $\X$. Firstly, for every $\eps>0$ the
coordinates $\X_1,\,X_2,\,\ldots,\,\X_N$ belong to the interval $[-(2+\eps)\,N, (2+\eps)\,N]$ with
high probability. More precisely, \cite[Theorem 1 and Equation (1.4)]{LR} show that for every
$\eps>0$ there exists a constant $C=C(\eps)>0$ such that for each $N=1,2,\ldots$
\begin{equation} \label{eq_LR_estimates}
\pp\big( |\X_i|>(2+\eps)\,N\text{ for some }i=1,\,2,\,\ldots,\,N \big)\le
\begin{cases} C\exp(-N \eps^{3/2}/C),& \text{if }\eps<1,\\ C\exp(-N \eps^2/C), &\text{if } \eps
\ge 1.
\end{cases}
\end{equation}

\smallskip

Further, the Wigner semi-circle law for beta-ensembles (see \cite{J-CLT}, \cite[Section 6.2]{Dum},
\cite[Section 2.6]{AGZ})
 implies that for any continuous bounded function $f$ on $\rr$ we have
\begin{equation}
\label{eq_Wigner_semicircle}
 \lim_{N\to\infty}\;
 \frac{1}{N}\,\sum_{i=1}^N f\left(\frac{\X_i}{N}\right)= \frac{1}{2\pi}\int_{-2}^2 f(s)\,(4-s^2)^{1/2}\,\mathrm{d}s
\end{equation}
in the sense of convergence in probability. In addition, we need a certain large deviations
estimate around the semi-circle law. For our purposes the simplest way to proceed is to use the
bulk rigidity estimates established in \cite{BEY_bulk}, although probably the same conclusion
(more specifically, the estimate \eqref{eq_deviation_bound} below) can be also drawn from other
large deviations estimates in the literature, such as \cite[Section 2.6]{AGZ}. For
$i=1,\,2,\,\ldots,\,N$ we define $\gamma_i$ through the identity
$$
\frac{1}{2\pi} \int_{-\infty}^{\gamma_i/N} (4-s^2)^{1/2}\,\mathrm{d}s = \frac{i}{N}.
$$
Then, \cite[Theorem 3.1]{BEY_bulk} asserts that for every $\eps,\kappa>0$ there exist
$c=c(\eps,\kappa)>0$, $C=C(\eps,\kappa)>0$ (both independent of $N$) such that for all
$i\in\{\lfloor \eps N\rfloor, \lfloor \eps N\rfloor,\dots,\lfloor (1-\eps)N\rfloor\}$ we have
\begin{equation}
\label{eq_bulk_rigidity} \pp\big( |\X_i-\gamma_i|>N^{\kappa} \big)< C\,\exp\left(-N^c/C\right).
\end{equation}

\smallskip

Our proof of Lemma \ref{lemma_sum_inverse} relies on the following auxiliary lemma.

\begin{lemma}\label{Lemma_inverse_expectation} For any $\beta\ge 1$ we have
\eq
\lim_{N\to\infty} \; \ev\,\bigg[\,\sum_{i=1}^{N-1} \frac{1}{\X_N-\X_i}\bigg]=1.
\en
\end{lemma}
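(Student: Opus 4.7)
The plan is to exploit the integration-by-parts trick credited in the acknowledgments. By the scaling identity \eqref{eq_in_dist} I may take $T_0=2/\beta$ and $t=0$, in which case $\X=(\X_1,\dots,\X_N)$ has joint density
$$
p(x)=\frac{1}{Z_N}\prod_{i<j}(x_j-x_i)^\beta\prod_{i=1}^N\exp\!\left(-\frac{\beta x_i^2}{4N}\right)
$$
on the Weyl chamber $x_1<\cdots<x_N$. A direct computation of $\partial_{x_N}\log p$ and rearrangement produces the pointwise identity
$$
\sum_{i=1}^{N-1}\frac{p(x)}{x_N-x_i}=\frac{1}{\beta}\,\partial_{x_N}p(x)+\frac{x_N}{2N}\,p(x).
$$
The first step is to integrate this identity over the chamber. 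Doing the $x_N$-integration first on the gradient term telescopes it to the boundary values
$$
\int_{x_{N-1}}^{\infty}\partial_{x_N}p\,\mathrm{d}x_N=p(x_1,\dots,x_{N-1},\infty)-p(x_1,\dots,x_{N-1},x_{N-1})=0,
$$
the infinity term vanishing from the Gaussian factor and the diagonal term from the Vandermonde factor $(x_N-x_{N-1})^\beta$ -- this is precisely where the assumption $\beta\ge 1$ enters. Hence one obtains the exact identity
$$
\ev\!\left[\sum_{i=1}^{N-1}\frac{1}{\X_N-\X_i}\right]=\frac{1}{2N}\,\ev[\X_N].
$$

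It then remains to show $\ev[\X_N]/N\to 2$. Convergence in probability $\X_N/N\to 2$ will follow by sandwiching: the upper bound $\X_N\le(2+\eps)N$ with high probability is the $i=N$ case of \eqref{eq_LR_estimates}, while the matching lower bound $\X_N\ge(2-\eps)N$ comes from applying the Wigner semi-circle law \eqref{eq_Wigner_semicircle} to a non-negative continuous bump $f$ supported inside $[2-\eps,2)$, whose normalized sum $\frac{1}{N}\sum f(\X_i/N)$ has a strictly positive limit and therefore forces at least one (hence the largest) $\X_i/N$ to land in the support of $f$. To pass from convergence in probability to convergence of the expectation I would invoke the exponential tail \eqref{eq_LR_estimates} applied to $|\X_N|/N$, which yields uniform integrability so that both $\ev[(\X_N/N-2)_+]\to 0$ and $\ev[(2-\X_N/N)_+]\to 0$.

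The hard part is the boundary analysis in the integration by parts: one needs both the interchange of the sum and the integral -- for which $\beta\ge 1$ renders $p(x)/(x_N-x_i)$ integrable near the coincidence set $x_N=x_i$ -- and the vanishing of $p$ on the diagonal $x_N=x_{N-1}$, again requiring the Vandermonde exponent to be at least one. The remaining ingredients are soft: the semi-circle law together with the exponential upper tail are sufficient, so that the sharper edge fluctuation results of \cite{RRV} never need to enter the argument.
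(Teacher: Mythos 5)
Your argument is correct and is essentially the paper's proof: the same integration-by-parts identity yields the exact relation $\ev\big[\sum_{i=1}^{N-1}(\X_N-\X_i)^{-1}\big]=\ev[\X_N]/(2N)$, with the boundary terms vanishing exactly as you indicate. The only difference is that where the paper simply cites \cite[Corollary 3]{LR} for $\ev[\X_N]/(2N)\to 1$, you re-derive this fact from the semicircle law \eqref{eq_Wigner_semicircle} and the tail bound \eqref{eq_LR_estimates}, which is a valid (if slightly longer) substitute.
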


\begin{proof}
Let $Z_N$ denote the normalization constant of the $\beta$ Hermite ensemble, that is the integral of the expression \eqref{eq_time_t_DBM} over the set \eqref{wedge} with $t=2N/\beta$ and $k=N$. Integration by parts yields
\begin{eqnarray*}
\ev\bigg[\,\sum_{i=1}^{N-1} \frac{1}{\X_N-\X_i}\bigg]
&=& \frac{1}{Z_N}\,\int_{\WW} \sum_{i=1}^{N-1} \frac{1}{x_N-x_i} \,
\prod_{i<j} (x_j-x_i)^{\beta}\,\prod_{i=1}^N \exp\left(-\frac{\beta x_i^2}{4N}\right)\,\mathrm{d}x_i\\
&=&\frac{1}{\beta Z_N}\int_{\WW} \frac{\partial}{\partial x_N}
\bigg( \prod_{i<j} (x_j-x_i)^{\beta}\bigg)\,
\prod_{i=1}^N \exp\left(-\frac{\beta x_i^2}{4N}\right)\,\mathrm{d}x_i \\
&=&-\frac{1}{\beta Z_N}\int_{\WW}  \prod_{i<j} (x_j-x_i)^{\beta}\,
\frac{\partial}{\partial x_N}\bigg( \prod_{i=1}^N \exp\left(-\frac{\beta x_i^2}{4N}\right)\bigg)
\,\prod_{i=1}^N \,\mathrm{d}x_i \\
&=& \frac{1}{2N Z_N}\int_{\WW}  x_N\,\prod_{i<j} (x_j-x_i)^{\beta}
\,\prod_{i=1}^N \exp\left(-\frac{\beta x_i^2}{4N}\right)\,\mathrm{d}x_i=\frac{\ev[\X_N]}{2N}.
\end{eqnarray*}
It remains to use the well-known fact that
$$
\lim_{N\to\infty} \frac{\ev[\X_N]}{2N}=1,
$$
see e.g. \cite[Corollary 3 and discussion following it]{LR}.
\end{proof}

An argument similar to the one in the proof of Lemma \ref{Lemma_inverse_expectation} yields the following lemma that will be useful for us further below.

\begin{lemma}\label{Lemma_inverse_expectation_squared} For any $\beta\ge 2$ we have
$$
 \liminf_{N\to\infty} \; \ev\bigg[\,\bigg(\sum_{i=1}^{N-1} \frac{1}{\X_N-\X_i}\bigg)^2\,\bigg]\ge 1,\qquad
 \limsup_{N\to\infty} \; \ev\bigg[\,\bigg(\sum_{i=1}^{N-1} \frac{1}{\X_N-\X_i}\bigg)^2\,\bigg]\le \frac{\beta}{\beta-1}.
$$
\end{lemma}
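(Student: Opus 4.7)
The plan is to prove the two bounds separately. The lower bound is immediate from Jensen's inequality: $\ev\big[(\sum_{i=1}^{N-1}(\X_N-\X_i)^{-1})^2\big]\ge \big(\ev[\sum_{i=1}^{N-1}(\X_N-\X_i)^{-1}]\big)^2$, and the right-hand side tends to $1$ by Lemma \ref{Lemma_inverse_expectation}.

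For the upper bound, I would extend the integration-by-parts computation used in Lemma \ref{Lemma_inverse_expectation}. Write $\mathcal S=\sum_{i=1}^{N-1}(\X_N-\X_i)^{-1}$ and $\mathcal T=\sum_{i=1}^{N-1}(\X_N-\X_i)^{-2}$. Using the identity $\partial_{x_N}\prod_{i<j}(x_j-x_i)^\beta=\beta\big(\sum_{i<N}(x_N-x_i)^{-1}\big)\prod_{i<j}(x_j-x_i)^\beta$, one obtains, after integrating by parts in $x_N$ (the boundary terms at $x_N\downarrow x_{N-1}$ behave like $(x_N-x_{N-1})^{\beta-1}$ and thus vanish for $\beta\ge 2$),
\begin{equation*}
\ev[\mathcal S^2]=\tfrac{1}{\beta}\,\ev[\mathcal T]+\tfrac{1}{2N}\,\ev[\X_N\mathcal S].
\end{equation*}
Since all summands in $\mathcal S$ are positive, all cross terms in $\mathcal S^2=\mathcal T+\sum_{i\neq j}(\X_N-\X_i)^{-1}(\X_N-\X_j)^{-1}$ are nonnegative, so $\mathcal T\le \mathcal S^2$. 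Substituting yields
\begin{equation*}
\Big(1-\tfrac{1}{\beta}\Big)\ev[\mathcal S^2]\le \tfrac{1}{2N}\,\ev[\X_N\mathcal S],\qquad\text{i.e.,}\qquad \ev[\mathcal S^2]\le \tfrac{\beta}{\beta-1}\cdot\tfrac{\ev[\X_N\mathcal S]}{2N}.
\end{equation*}

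It then remains to show that $\limsup_N \ev[\X_N\mathcal S]/(2N)\le 1$. For this, fix $\eps>0$ and split according to the event $A_\eps=\{\X_N\le (2+\eps)N\}$. On $A_\eps$ one has $\ev[\X_N\mathcal S;A_\eps]\le (2+\eps)N\,\ev[\mathcal S]$, while Cauchy-Schwarz gives
\begin{equation*}
\ev[\X_N\mathcal S;A_\eps^c]\le \sqrt{\ev[\X_N^2;A_\eps^c]}\,\sqrt{\ev[\mathcal S^2]}.
\end{equation*}
The tail bound \eqref{eq_LR_estimates}, together with a standard integration of tails (using that $\pp(\X_N>(2+\delta)N)$ decays stretched-exponentially in $N$ uniformly in $\delta\ge\eps$), shows that $\ev[\X_N^2;A_\eps^c]\to 0$ as $N\to\infty$. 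Combining these with Lemma \ref{Lemma_inverse_expectation} and setting $M_N=\sqrt{\ev[\mathcal S^2]}$ produces a quadratic inequality of the form $M_N^2\le A_N+B_N M_N$ with $A_N\to \tfrac{\beta(2+\eps)}{2(\beta-1)}$ and $B_N\to 0$; solving it gives $\limsup_N M_N^2\le \tfrac{\beta(2+\eps)}{2(\beta-1)}$. Letting $\eps\downarrow 0$ yields the claimed bound $\limsup_N \ev[\mathcal S^2]\le \tfrac{\beta}{\beta-1}$.

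The main delicate point is controlling the contribution of the rare event $\{\X_N>(2+\eps)N\}$ to $\ev[\X_N\mathcal S]$: one cannot use $\ev[\mathcal S]$ alone there, so Cauchy-Schwarz forces $\ev[\mathcal S^2]$ back into the estimate. The resolution is to treat the resulting inequality as a quadratic in $\sqrt{\ev[\mathcal S^2]}$; since the coefficient in front of the linear term tends to zero (at a rate much faster than any polynomial in $N$, by \eqref{eq_LR_estimates}), the quadratic degenerates in the limit and no a priori bound on $\ev[\mathcal S^2]$ is needed. A minor preliminary point is verifying that $\ev[\mathcal S^2]<\infty$ for each finite $N$, which follows from the $(x_N-x_{N-1})^{\beta-2}$ behavior of the integrand near the boundary and the assumption $\beta\ge 2$.
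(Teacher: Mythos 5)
Your proof is correct. The lower bound via Jensen is clean and equivalent in spirit to the paper's use of $\mathcal T\ge 0$. For the upper bound, both you and the paper start from the same integration-by-parts computation, but the paper integrates by parts \emph{twice} in $x_N$, arriving directly at the closed identity
\[
\ev\Big[\mathcal S^2-\tfrac1\beta\mathcal T\Big]=\frac{\ev[\X_N^2]}{4N^2}-\frac{1}{2N\beta},
\]
whose right-hand side converges to $1$ by \cite[Corollary 3]{LR}; the two elementary inequalities $0\le\mathcal T\le\mathcal S^2$ then finish immediately. You stop after a single integration by parts, leaving the mixed term $\ev[\X_N\mathcal S]/(2N)$ on the right, and then close the argument with the Cauchy--Schwarz/self-bounding trick (splitting on $\{\X_N\le(2+\eps)N\}$, controlling the rare-event contribution via \eqref{eq_LR_estimates}, and solving the resulting quadratic in $\sqrt{\ev[\mathcal S^2]}$). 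This is valid, and it has the virtue of replacing the second-moment convergence $\ev[\X_N^2]/(4N^2)\to 1$ with only the first-moment Lemma \ref{Lemma_inverse_expectation} plus the tail bound \eqref{eq_LR_estimates}, both already in the paper. On the other hand, the extra machinery is avoidable: one more integration by parts applied to $\ev[\X_N\mathcal S]$ (which equals $\tfrac{1}{\beta Z_N}\int x_N(\partial_{x_N}V)w$) yields $\ev[\X_N\mathcal S]=-\tfrac1\beta+\tfrac{\ev[\X_N^2]}{2N}$ and recovers the paper's identity without any further probabilistic estimate. Also a minor point: the boundary term $\mathcal S V w$ behaves like $(x_N-x_{N-1})^{\beta-1}$ and vanishes already for $\beta>1$, not just $\beta\ge 2$; the restriction $\beta\ge 2$ is what guarantees $\ev[\mathcal S^2]<\infty$ and makes $\tfrac{\beta}{\beta-1}$ a nontrivial bound.
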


\begin{rmk}
With additional efforts one probably can prove that the expectation in Lemma
\ref{Lemma_inverse_expectation_squared} converges to $1$, but we will not need this fact here.
\end{rmk}

\begin{proof}[Proof of Lemma \ref{Lemma_inverse_expectation_squared}]
Starting with the identity
$$
 \left(\frac{\partial}{\partial x_N}\right)^2\bigg( \prod_{1\le i<j\le N} (x_j-x_i)^{\beta}\bigg)
=\bigg( -\beta\sum_{i=1}^{N-1}\frac{1}{(x_N-x_i)^2}
+\beta^2\bigg(\sum_{i=1}^{N-1} \frac{1}{x_N-x_i}\bigg)^2\bigg)\,\prod_{1\le i<j\le N}
(x_j-x_i)^{\beta},
$$
integrating by parts as in the proof of Lemma \ref{Lemma_inverse_expectation} twice, and appealing to \cite[Corollary 3 and discussion following it]{LR} we obtain
$$
 \lim_{N\to\infty}\; \ev\bigg[ -\frac{1}{\beta}\,\sum_{i=1}^{N-1}\frac{1}{(\X_N-\X_i)^2}+
\bigg(\sum_{i=1}^{N-1} \frac{1}{\X_N-\X_i}\bigg)^2\bigg]=1.
$$
The lemma now follows from the elementary inequalities
$$
\frac{\beta-1}{\beta}\,\bigg(\sum_{i=1}^{N-1} \frac{1}{\X_N-\X_i} \bigg)^2
\le -\frac{1}{\beta}\,\sum_{i=1}^{N-1}\frac{1}{(\X_N-\X_i)^2}
+ \bigg(\sum_{i=1}^{N-1} \frac{1}{\X_N-\X_i}\bigg)^2
\le \left(\sum_{i=1}^{N-1} \frac{1}{\X_N-\X_i}\right)^2. \qedhere
$$
\end{proof}

\begin{proof}[Proof of Lemma \ref{lemma_sum_inverse}]
We need to show that
\eq\label{claim:L1}
 \lim_{N\to\infty} \;\ev \left[\left| \sum_{i=1}^{N-1} \frac{1}{\X_N-\X_i} -1 \right|\right],
\en
that is the $L^1$-convergence of the positive random variables $\sum_{i=1}^{N-1} \frac{1}{\X_N-\X_i}$ to the constant $1$. This amounts to establishing the convergence of expectations and the convergence in probability. In view of Lemma \ref{Lemma_inverse_expectation}, the claim \eqref{claim:L1} can be therefore reduced to the statements
\begin{eqnarray}
&& \forall\,\eps>0:\quad \lim_{N\to\infty} \;\pp\bigg(\sum_{i=1}^{N-1} \frac{1}{\X_N-\X_i}>1-\eps\bigg)=1, \label{eq_x8} \\
&& \forall\,\eps>0:\quad \lim_{N\to\infty} \;\pp\bigg(\sum_{i=1}^{N-1}
\frac{1}{\X_N-\X_i}<1+\eps\bigg)=1. \label{eq_x9}
\end{eqnarray}
We start with \eqref{eq_x8} and let $\delta>0$ be a constant to be chosen later. According to \cite[Theorem 1]{LR}, with probability tending to $1$ we have $\X_N\le(2+\delta)N$, and on that event
\begin{equation}\label{eq_x11}
\begin{split}
\sum_{i=1}^{N-1} \frac{1}{\X_N-\X_i}
\ge\frac{1}{N}\sum_{i=1}^{N-1} \frac{1}{2+2\delta-\X_i/N}
& =\frac{1}{N}\sum_{i=1}^{N} \frac{1}{2+2\delta-\X_i/N} - \frac{1}{N}\,\frac{1}{2+2\delta-\X_N/N} \\
& \ge \frac{1}{N}\sum_{i=1}^{N} \frac{1}{2+2\delta-\X_i/N} - \frac{1}{N\delta}.
\end{split}
\end{equation}
Moreover, on the event $\X_N\le(2+\delta)N$ one can write the first term in the latter lower bound in the form of the left-hand side of \eqref{eq_Wigner_semicircle} with a function $f$ which is continuous and bounded. Applying the semi-circle law we can therefore conclude
\begin{equation}
\label{eq_x10}
\lim_{N\to\infty} \; \frac{1}{N}\sum_{i=1}^{N} \frac{1}{2+2\delta-\X_i/N}=\frac{1}{2\pi} \int_{-2}^2 \,\frac{1}{2+\delta-s}
\,(4-s^2)^{1/2}\,\mathrm{d}s
\end{equation}
in probability. We further note that the right-hand side of \eqref{eq_x10} is a continuous function of $\delta\ge 0$ which
equals to $1$ when $\delta=0$ due to the simple computation
\begin{equation} \label{eq_integral_evalutation}
 \int_{-2}^2 \, \frac{1}{2-s}\,(4-s^2)^{1/2}\,\mathrm{d}s\,\stackrel{s=2\cos u}{=}\,\int_{0}^\pi \frac{2\sin u}{2-2\cos u}\,
2\sin u\,\mathrm{d}u = 2\,\int_0^\pi (1+\cos u)\,\mathrm{d}u=2\pi.
\end{equation}
We conclude that there exists a $\delta>0$ such that the two sides of \eqref{eq_x10} are greater than $1-\eps/3$. Fixing such a choice of $\delta$, considering $N>3/(\delta\eps)$ and combining \eqref{eq_x11} with \eqref{eq_x10} we obtain \eqref{eq_x8}.

\medskip

Now, we turn our attention to \eqref{eq_x9}. We argue by the contradiction and assume that there exists a $\xi>0$ such that there are arbitrary large $N$ with
$$
\pp\bigg(\sum_{i=1}^{N-1} \frac{1}{\X_N-\X_i}>1+\eps\bigg)\ge \xi.
$$
We let $\delta>0$ be a constant to be chosen later and for every fixed $N$ introduce the events
\begin{multline*}
 A^>:=\left\{\sum_{i=1}^{N-1} \frac{1}{\X_N-\X_i}>1+\eps\right\},\quad  A^\approx :=\left\{1-\delta\le \sum_{i=1}^{N-1}
 \frac{1}{\X_N-\X_i}\le 1+\eps\right\},\\ A^< :=\left\{\sum_{i=1}^{N-1}
 \frac{1}{\X_N-\X_i}<1-\delta\right\}.
\end{multline*}
With the notation $\mathbf{1}_A$ for the indicator function of an event $A$ we can now write
\begin{equation} \label{eq_x12}
\begin{split}
 \ev\bigg[\,\sum_{i=1}^{N-1} \frac{1}{\X_N-\X_i}\bigg] &=
 \ev\bigg[\,\mathbf{1}_{A^>}\,\sum_{i=1}^{N-1} \frac{1}{\X_N-\X_i}\bigg]
+\ev\bigg[\,\mathbf{1}_{A^\approx}\,\sum_{i=1}^{N-1} \frac{1}{\X_N-\X_i}\bigg]
+\ev\bigg[\,\mathbf{1}_{A^<}\,\sum_{i=1}^{N-1} \frac{1}{\X_N-\X_i}\bigg] \\
& \ge (1+\eps)\,\pp(A^>)+(1-\delta)\,\pp(A^\approx) = (1+\eps)\,\pp(A^>)+(1-\delta)(1-\pp(A^>)-\pp(A^<)) \\
& \qquad\qquad\qquad\qquad\qquad\qquad\qquad\;\; =1-\delta+(\delta+\eps)\,\xi-(1-\delta)\,\pp(A^<).
\end{split}
\end{equation}
For any fixed $\delta<\frac{\eps\,\xi}{1-\xi}$ the latter lower bound is strictly greater than $1$ and bounded away from $1$ for all $N$ large enough due to \eqref{eq_x8}. On the other hand, the first expression in \eqref{eq_x12} tends to $1$ in the limit $N\to\infty$. This is the desired contradiction.
\end{proof}

\begin{proof}[Proof of Lemma \ref{lemma_one_inverse}]
For each $0<\eps,\eta<1$ and $N\in\nn$ we introduce the events
\begin{eqnarray*}
&& A(\eps;N):=\{\X_{N-1}<(2+\eps)N\}, \\
&& B(\eps,\eta;N):=\bigg\{\frac{1}{N} \sum_{i=1}^{N-\lceil \eps N\rceil} \frac{1}{(2+2\eps)N-\X_i}>1-\eta,
\;\; \X_{N-1}-\X_{N-\lceil\eps N\rceil} > \eps^2 N\bigg\}.
\end{eqnarray*}
In addition, we define
\begin{equation}
\label{eq_f_density_def} f(x;y_1,\dots,y_{N-1}):=\begin{cases} \prod_{i=1}^{N-1}(x-y_i)^{\beta}
\exp\left(-\frac{\beta x^2}{4N}\right),& \text{if }x>y_{N-1}\\ 0,&\text{otherwise.} \end{cases}
\end{equation}
Note that $f(x;y_1,\dots,y_{N-1})$ is proportional (up to a constant independent of $x$) to the conditional probability density of $\X_N$ given $\X_1=y_1,\,\X_2=y_2,\,\ldots,\,\X_{N-1}=y_{N-1}$.

\medskip

First, we will give an upper bound on the conditional expectations of the form
$$
\ev\bigg[\frac{1}{\X_N-\X_{N-1}}\bigg|\,A(\eps;N)\cap B(\eps,\eta;N)\bigg].
$$
To this end, we pick a value $(y_1,\,y_2,\,\ldots,\,y_{N-1})$ of $\big(\X_1,\,\X_2,\,\ldots,\,\X_{N-1}\big)$ for which the event $A(\eps;N)\cap B(\eps,\eta;N)$ occurs, choose $\delta>0$ and $x$ such that $y_{N-1}<x<x+\delta<(2+2\eps)N$, and consider the ratio
\begin{equation}
\label{eq_dens_ratio}
\frac{f(x+\delta;y_1,\dots,y_{N-1})}{f(x;y_1,\dots,y_{N-1})}=\left(\frac{x-y_{N-1}+\delta}{x-y_{N-1}}\right)^\beta
\,\exp\left(-\frac{\beta\delta(2x+\delta)}{4 N}\right) \, \prod_{i=1}^{N-2} \left(1+\frac{\delta}{x-y_{i}}\right)^\beta.
\end{equation}
Thanks to $x+\delta<(2+2\eps) N$ we have
\begin{equation}
\label{eq_x3} \exp\left(-\frac{\beta\delta(2x+\delta)}{4N}\right)>\exp\bigl(-\beta\delta(1+\eps)\bigr).
\end{equation}
Further, using $x-y_i>\eps^2 N$ for $i\le N-N\eps$, and recalling the elementary inequality $1+u>\exp((1-\eps)u)$ valid for all small enough positive $u$ we obtain
\begin{equation} \label{eq_x1}
\prod_{i=1}^{N-2} \Big(1+\frac{\delta}{x-y_{i}}\Big)
> \exp\bigg(\frac{\beta\delta(1-\eps)}{N}\sum_{i=1}^{N-\lceil \eps N\rceil} \frac{1}{x/N-y_i/N}\bigg)
> \exp\big( \beta\delta(1-\eps)(1-\eta)\big)
\end{equation}
for all $N$ large enough. Moreover, a combination of \eqref{eq_x3} and \eqref{eq_x1} yields
\begin{equation}
\label{eq_x4}
\frac{f(x+\delta;y_1,\dots,y_{N-1})}{f(x;y_1,\dots,y_{N-1})}>\left(\frac{x-y_{N-1}+\delta}{x-y_{N-1}}\right)^\beta
\exp\left(-\beta\delta(\eta+2\eps)\right).
\end{equation}

\smallskip

The inequality \eqref{eq_x4} leads to an upper bound on the conditional expectation of $\frac{1}{\X_N-\X_{N-1}}$. Indeed, we can write
\begin{equation} \label{eq_x16}
\begin{split}
&\ev\bigg[\frac{1}{\X_N-\X_{N-1}}\bigg|\,A(\eps;N)\cap B(\eps,\eta;N)\bigg] \\
&\qquad\qquad = \ev\bigg[\frac{1}{\X_N-\X_{N-1}}\bigg|\,A(\eps;N)\cap B(\eps,\eta;N),\;\X_N-\X_{N-1}<(\eta+2\eps)^{-1/2}\bigg] \\
&\qquad\qquad\;\;\;\;\;\times \pp\Big(\X_N-\X_{N-1}<(\eta+2\eps)^{-1/2}\big|\,A(\eps;N)\cap B(\eps,\eta;N)\Big) \\
&\qquad\qquad\;\;\; + \ev\bigg[\frac{1}{\X_N-\X_{N-1}}\bigg|\,A(\eps;N)\cap B(\eps,\eta;N),\;\X_N-\X_{N-1}\ge (\eta+2\eps)^{-1/2}\bigg]\\
&\qquad\qquad\;\;\;\;\;\;\;\; \times\pp\Big(\X_N-\X_{N-1}\ge(\eta+2\eps)^{-1/2}\big|\,A(\eps;N)\cap B(\eps,\eta;N)\Big).
\end{split}
\end{equation}
Now, we can divide $\big[0,(\eta+2\eps)^{-1/2}\big)$ into disjoint intervals of length $\delta$
and use \eqref{eq_x4} to bound the possible increase of the conditional expectation of
$\frac{1}{\X_N-\X_{N-1}}$ on the event that $\X_N-\X_{N-1}$ belongs to one such interval, as we
move from the rightmost to the leftmost interval of this type. It follows that the conditional
expectation in the first summand on the right-hand side of \eqref{eq_x16} can be bounded above by
a constant $C<\infty$ independent of $N$, $\eps$ and $\eta$ as long as $\eps<1$ and $\eta<1$. To
bound the conditional probability in the first summand on the right--hand side of \eqref{eq_x16},
we use \eqref{eq_x4} again, which implies that the conditional density of
$\frac{1}{\X_N-\X_{N-1}}$ on $\big[0,(\eta+2\eps)^{-1/2}\big)$ is less than a (universal) multiple
of that same density on $\big[(\eta+2\eps)^{-1/2},(\eta+2\eps)^{-1}\big)$. This and the trivial
fact that the conditional probability of $\frac{1}{\X_N-\X_{N-1}}$ being in the latter interval is
at most one show that the first conditional probability in \eqref{eq_x16} is bounded above by
$C(\eta+2\eps)^{1/2}$, with $C<\infty$ being a universal constant as long as $\eps<1$ and
$\eta<1$.

Moreover, the second conditional expectation on the right-hand side of \eqref{eq_x16} is clearly
less than $(\eta+2\eps)^{1/2}$, while the conditional probability is less than $1$. All in all, we
see that for each $0<\eps,\eta<1$ and all $N$ large enough it holds
\begin{equation}
\label{eq_x15}
\ev\bigg[\,\frac{1}{\X_N-\X_{N-1}}\bigg|\,A(\eps;N)\cap B(\eps,\eta;N)\bigg]<C(\eta+2\eps)^{1/2}.
\end{equation}

\smallskip

Our next aim is to understand the behavior of $\frac{1}{\X_N-\X_{N-1}}$ on the complement of the event $A(\eps;N)\cap B(\eps,\eta;N)$. To this end, we note that if we drop the term $\prod_{i=1}^{N-2} (x-y_i)^\beta$ in \eqref{eq_f_density_def}, then the expectation of $\frac{1}{\X_N-\X_{N-1}}$ will increase or stay the same. One can see this by noting that the modified probability density stochastically dominates the original one thanks to the monotonicity of the quotient of the two. It follows that for \emph{all} $y_i$ we have the bound
\eq
 \ev \bigg[\frac{1}{\X_N-y_{N-1}}\bigg|\X_i=y_i,\;i=1,\,2,\,\ldots,\,N-1 \bigg]
\le \frac{\int_{y_{N-1}}^\infty (x-y_{N-1})^{\beta-1} \exp\left(-\frac{\beta x^2}{4N}\right)\,\mathrm{d}x}
 {\int_{y_{N-1}}^\infty (x-y_{N-1})^{\beta} \exp\left(-\frac{\beta x^2}{4N}\right)\,\mathrm{d}x}\,.
\en

\smallskip

We are interested in the ratio of the integrals
\eq \label{eq_x5}
\begin{split}
\int_{y_{N-1}}^\infty (x-y_{N-1})^\gamma \exp\left(-\frac{\beta x^2}{4N}\right)\,\mathrm{d}x &=
N^{\frac{\gamma+1}{2}}  \int_{y_{N-1}/\sqrt{N}}^\infty \Big(z-\frac{y_{N-1}}{\sqrt{N}}\Big)^\gamma
\exp\left(-\frac{\beta z^2}{4}\right)\,\mathrm{d}z \\
&=N^{\frac{\gamma+1}{2}}  \int_0^\infty
z^\gamma\,\exp\left(-\frac{\beta}{4}\Big(z+\frac{y_{N-1}}{\sqrt{N}}\Big)^2\right) \,\mathrm{d}z
\end{split}
\en
with $\gamma=\beta$ and $\gamma=\beta-1$. Clearly, as long as $y_{N-1}/\sqrt{N}$ is bounded in absolute value by a uniform constant, the ratio is bounded above by a uniform constant. On the other hand, when $y_{N-1}/\sqrt{N}$ is negative and of large absolute value, the dominant contribution to both integrals comes from $z$'s in the neighborhood of $-y_{N-1}/\sqrt{N}$ and in this case the ratio of the two integrals is less than one. Finally, when $y_{N-1}/\sqrt{N}$ is a large positive number (which is typical, since we expect $y_{N-1}\approx 2N$), the dominant contribution to both integrals comes from $z$'s in the neighborhood of zero. Therefore, in this regime the integrals of \eqref{eq_x5} have the asymptotics
\eq\label{eq_x6}
\begin{split}
&\; N^{\frac{\gamma+1}{2}}  \int_0^\infty z^\gamma\,\exp\left(-\frac{\beta}{4}\Big(z^2+\frac{y_{N-1}^2}{N}+\frac{2\,z\,y_{N-1}}{\sqrt{N}}\Big)\right)\,\mathrm{d}z\\
&\approx N^{\frac{\gamma+1}{2}} \exp\left(-\frac{\beta}{4}\,\frac{y_{N-1}^2}{N}\right)
\int_0^\infty z^{\gamma}\,\exp\left(-\frac{\beta}{2}\,\frac{z\,y_{N-1}}{\sqrt{N}}\right)\,\mathrm{d}z \\
&= \frac{N^{\frac{\gamma+1}{2}}}{y_{N-1}^{\gamma+1}}
\exp\left(-\frac{\beta}{4}\,\frac{y_{N-1}^2}{N} \right)
\int_0^\infty z^\gamma\,\exp\left(-\frac{\beta}{2}\, z \right)\,\mathrm{d}z\,.
\end{split}
\en Taking the ratio of the latter expression with $\gamma=\beta-1$ and that with $\gamma=\beta$
we find a universal constant $C<\infty$ such that \eq
\ev\left[\frac{1}{\X_N-y_{N-1}}\bigg|\X_i=y_i,\;i=1,\,2,\,\ldots,\,N-1 \right] \le
C\,\frac{y_{N-1}}{\sqrt N} \en when $\frac{y_{N-1}}{\sqrt N}\ge C$. All in all, we see that there
is a universal constant $C>0$ such that for all $y_1,\,y_2,\,\ldots,\,y_{N-1}$
\begin{equation}
\label{eq_x7}
\ev \left[\frac{1}{\X_N-y_{N-1}}\bigg|\X_i=y_i,\;i=1,\,2,\,\ldots,\,N-1 \right]
\le C\,\bigg(\frac{y_{N-1}}{\sqrt N}\mathbf{1}_{\{y_{N-1}>0\}}+1\bigg).
\end{equation}

\medskip

To conclude the proof we write
\eq\label{eq_x14}
\begin{split}
 \ev \left[\frac{1}{\X_N-\X_{N-1}}\right]
=\ev \left[\frac{1}{\X_N-\X_{N-1}}\bigg|\{\X_{N-1}\le 3N\}\cap A(\eps;N)\cap B(\eps,\eta;N)\right]
\,\pp\Big(A(\eps;N)\cap B(\eps,\eta;N)\Big) \\
+ \ev \left[\frac{1}{\X_N-\X_{N-1}}\bigg|\{\X_{N-1}\le 3N\}\backslash\Big(A(\eps;N)\cap B(\eps,\eta;N)\Big)\right]
 \pp \biggl(\{\X_{N-1}\le 3N\}\backslash \Big(A(\eps;N)\cap B(\eps,\eta;N)\Big)\biggr)\\
+ \ev \left[\frac{1}{\X_N-\X_{N-1}}\,\mathbf{1}_{\{\X_{N-1}>3N\}}\right].
\end{split}
\en
We analyze the three summands separately and start with the third summand. By conditioning on $\X_1,\,\X_2,\,\ldots,\,\X_{N-1}$ and using \eqref{eq_x7} it can be bounded above by
$$
\ev\bigg[C\,\bigg(\frac{\X_{N-1}}{\sqrt N}\mathbf{1}_{\{\X_{N-1}>0\}}+1\bigg)\,\mathbf{1}_{\{\X_{N-1}>3N\}}\bigg].
$$
Moreover, for $z>3\sqrt{N}$ the estimate \eqref{eq_LR_estimates} yields
$$
\pp(\X_{N-1}/\sqrt{N}>z)\le C \exp\left(-(z-2\sqrt{N})^2/C\right).
$$
Hence, the third summand is bounded above by
$$
C\,\int_{3\sqrt{N}}^\infty \exp\left(-(z-2\sqrt{N})^2/C\right)\,\mathrm{d}z+3\,C\sqrt{N}\exp\left(-N/C\right)+C\exp\left(-N/C\right)
$$
which rapidly decays to zero as $N\to\infty$.

\smallskip

To estimate the second summand we combine the inequality \eqref{eq_LR_estimates}, Wigner's semi-circle law \eqref{eq_Wigner_semicircle} and the large deviation estimate \eqref{eq_bulk_rigidity} to conclude that for each $\eps>0$ there exists an $\eta=\eta(\eps)>0$, so that $\lim_{\eps\to 0} \eta(\eps)=0$ and
\begin{equation}\label{eq_deviation_bound}
\pp\biggl(\{\X_{N-1}\le 3N\}\setminus \Big(A(\eps;N)\cap B(\eps,\eta;N)\Big)\biggr)\le C \exp(-N^c/C),\quad N\in\nn
\end{equation}
for suitable constants $c=c(\eps)>0$ and $C=C(\eps)<\infty$. Combining this with \eqref{eq_x7} we
conclude that the second summand tends to zero in the limit $N\to\infty$ for every fixed $\eps>0$
and any $\eta=\eta(\eps)$ as described.

\smallskip

Lastly, \eqref{eq_x15} shows that the first summand is bounded above by $C(\eta+2\eps)^{1/2}$. All in all, we have established that for every fixed $\eps>0$ and any $\eta=\eta(\eps)$ as above
$$
\limsup_{N\to\infty} \;\ev \left[\frac{1}{\X_N-\X_{N-1}}\right]\le C(\eta(\eps)+2\eps)^{1/2}.
$$
We finish the proof by taking the limit $\eps\to 0$.
\end{proof}

\smallskip

\begin{proof}[Proof of Lemma \ref{Lemma_sum_inverse_cross_level}]
In view of the interlacing of the coordinates $X_1^N,\,X^N_2,\,\ldots,\,X_N^N$ and
$X_1^{N-1},\,X_2^{N-1},\,\ldots,\,X_{N-1}^{N-1}$ the sum in Lemma
\ref{Lemma_sum_inverse_cross_level} is less or equal to the sum in Lemma \ref{lemma_sum_inverse},
but not by more than $\frac{1}{X_N^N(NT_0+t;N)-X_{N-1}^N(NT_0+t;N)}$. Hence, Lemma
\ref{Lemma_sum_inverse_cross_level} is a consequence of Lemmas \ref{lemma_sum_inverse} and
\ref{lemma_one_inverse}.
\end{proof}

\smallskip

\begin{proof}[Proof of Theorem \ref{Theorem_convergence_fixed_time}]
As before we may assume without loss of generality that $T_0=2/\beta$ and $t=0$. We start with the case $k=1$ and introduce the notations $\X=\big(\X_1,\,\X_2,\,\ldots,\,\X_N\big)$ and $\Y=\big(\Y_1,\,\Y_2,\,\ldots,\,\Y_{N-1}\big)$ for the values of the processes $\big(X_1^N,\,X^N_2,\,\ldots,\,X_N^N\big)$ and $\big(X_1^{N-1},\,X_2^{N-1},\,\ldots,\,X_{N-1}^{N-1}\big)$ at time $2N/\beta$, respectively. Moreover, we recall that the probability density of the random vector $(\X,\Y)$ is proportional to
\begin{equation}\label{eq_2_level_density}
\prod_{i=1}^N \exp\left(-\frac{\beta x_i^2}{4 N}\right) \prod_{1\le
i<j\le N} (x_j-x_i)\, \prod_{1\le i<j \le N-1} (y_j-y_i) \, \prod_{i,j} |x_i-y_j|^{\beta/2-1}.
\end{equation}
Indeed, this can be seen by combining the probability density of $\X$ in \eqref{eq_time_t_DBM}
with the conditional probability density of $\Y$ given $\X$ in \cite[Proposition 1.3]{GS}, which
is obtained by integration \eqref{eq_beta_Hermite_corners}.

\medskip

We aim to show that as the distribution of $\X_N-\Y_{N-1}$ converges to the Gamma distribution with density
$$
 \frac{\left(\frac{\beta}{2}\right)^{\beta/2}}{\Gamma(\beta/2)}\,z^{\beta/2-1}\,e^{-\frac{\beta}{2}z}
$$
in the limit $N\to\infty$. Let us condition on $\X_1=x_1,\,\X_2=x_2,\,\ldots,\,\X_N=x_N$ and $\Y_1=y_1,\,\Y_2=y_2,\,\ldots,\,\Y_{N-2}=y_{N-2}$ and study the conditional distribution of $\X_N-\Y_{N-1}$. By \eqref{eq_2_level_density} its density is proportional to
$$
f(z):=z^{\beta/2-1} \prod_{i=1}^{N-2} (x_N-y_i-z) \prod_{i=1}^{N-1} (x_N-x_i-z)^{\beta/2-1},\quad 0<z<x_N-x_{N-1}.
$$
For $x_N-x_{N-1}>1$ and $0<z<x_N-x_{N-1}$ we have
\begin{equation}
\label{eq_x17}
 \frac{f(z)}{f(1)}=z^{\beta/2-1} \prod_{i=1}^{N-2}
 \left(1-\frac{z-1}{x_N-y_i-1}\right)
 \prod_{i=1}^{N-1}\left(1-\frac{z-1}{x_N-x_i-1}\right)^{\beta/2-1}.
\end{equation}

\smallskip

In order to analyze the ratio in \eqref{eq_x17}, we combine Lemmas \ref{lemma_sum_inverse}, \ref{Lemma_inverse_expectation} and \ref{Lemma_sum_inverse_cross_level} to deduce the existence of a sequence of sets $D(N)\subset \mathbb R^{2N-2}$ such that
\[
\lim_{N\to\infty} \pp\Big((\X_1,\,\X_2,\,\ldots,\,\X_N,\,\Y_1,\,\Y_2,\,\ldots,\,\Y_{N-2})\in D(N)\Big)=1
\]
and for any sequence $D(N)\ni\big(x_1(N),x_2(N),\ldots,x_N(N),y_1(N),y_2(N),\ldots,y_{N-2}(N)\big)$, $N\in\nn$ it holds
\[
\begin{split}
\lim_{N\to\infty} \;\; \frac{1}{x_N(N)-x_{N-1}(N)}=0, \qquad
\lim_{N\to\infty} \;\; \sum_{i=1}^{N-1} \frac{1}{x_N(N)-x_{i}(N)-1}=1,\quad\text{and} \\
\lim_{N\to\infty} \;\; \sum_{i=1}^{N-2} \frac{1}{x_N(N)-y_{i}(N)-1}=1\,.
\end{split}
\]
Clearly, it suffices to study the asymptotic distribution of $\X_N-\Y_{N-1}$ on the event $(\X_1,\,\X_2,\,\ldots,\,\X_N,\,\Y_1,\,\Y_2,\,\ldots,\,\Y_{N-2})\in D(N)$ which we do from here on.

\medskip

Fix a number $L>0$. Then the following limits are uniform in $0<z<L\,$:
\begin{eqnarray*}
&& \lim_{N\to\infty} \prod_{i=1}^{N-2} \bigg(1-\frac{z-1}{x_N-y_i-1}\bigg)
 \prod_{i=1}^{N-1}\bigg(1-\frac{z-1}{x_N-x_i-1}\bigg)^{\beta/2-1}\\
&&=\lim_{N\to\infty} \exp\bigg(-\sum_{i=1}^{N-2}\frac{z-1}{x_N-y_i-1}+(1-\beta/2)\,
\sum_{i=1}^{N-1}\frac{z-1}{x_N-x_i-1}+o\bigg(\frac{1}{x_N-x_{N-1}}\bigg)\bigg)=e^{-\frac{\beta}{2}(z-1)}.
\end{eqnarray*}
This and \eqref{eq_x17} show that
\eq\label{lim_compact} \lim_{N\to\infty} \; \frac{f(z)}{f(1)}=
z^{\beta/2-1} e^{-\frac{\beta}{2} (z-1)},
\en
 which corresponds precisely to the density of the
desired Gamma distribution. To finish the proof of the case $k=1$ it remains to show that the
sequence $\X_N-\Y_{N-1}$, $N\in\nn$ is tight. In view of \eqref{lim_compact} the tightness amounts
to
\begin{equation}\label{eq_x18}
\lim_{L\to\infty}\;\limsup_{N\to\infty}\;\int_L^\infty \frac{f(z)}{f(1)}\,\mathrm{d}z=0\,.
\end{equation}
To this end, we use the elementary inequality $1-u\le e^{-u}$, $u>0$ to obtain the bounds
\begin{eqnarray*}
\frac{f(z)}{f(1)} \le z^{\beta/2-1} \exp\bigg(-\sum_{i=1}^{N-2}\frac{z-1}{x_N-y_i-1}+(1-\beta/2)
\sum_{i=1}^{N-1}\frac{z-1}{x_N-x_i-1}\bigg) \le z^{\beta/2-1} e^{-(z-1)/C}
\end{eqnarray*}
with a suitable constant  $C>0$ depending only on $\beta$. This shows \eqref{eq_x18} and finishes
the proof in the case $k=1$.

\medskip

For general $k>1$ we proceed by induction. Suppose that Theorem
\ref{Theorem_convergence_fixed_time} holds for $k=m-1$ and let us prove it for $k=m$. We need to
find the limiting conditional distribution of the spacing
$X_{N-m+1}^{N-m+1}(2N/\beta;N)-X_{N-m}^{N-m}(2N/\beta;N)$ given the spacings
$X_N^N(2N/\beta;N)-X_{N-1}^{N-1}(2N/\beta;N)$,
$X_{N-1}^{N-1}(2N/\beta;N)-X_{N-2}^{N-2}(2N/\beta;N),\,\ldots,\,X_{N-m+2}^{N-m+2}(2N/\beta;N)-X_{N-m+1}^{N-m+1}(2N/\beta;N)$.
However, if we additionally condition on $\big(X_i^j(2N/\beta;N):\;1\leq i\leq j,\;N-m+1\le j\le
N\big)$, then the conditional distribution of $\big(X_i^j(2N/\beta;N):\;1\leq i\leq
j,\;j\in\{N-m,\,N-m+1\}\big)$ is the same as in \eqref{eq_2_level_density} thanks to
\cite[Proposition 1.3]{GS}. At this point, it remains to repeat line by line the argument in the
$k=1$ case.
\end{proof}

%%%%%%%%%%%%%%%%%%%%%%%%%%%%%%%%%%%%
\section{Dynamic limit theorem}
%%%%%%%%%%%%%%%%%%%%%%%%%%%%%%%%%%%%

The aim of this section is to prove a dynamic version of Theorem \ref{Theorem_convergence_fixed_time}

%%%%%%%%%%%%%%%%%%%%%%%%%%%%%%%%%%%%
\subsection{Statement and heuristics}
%%%%%%%%%%%%%%%%%%%%%%%%%%%%%%%%%%%%

For each $k=1,2,\dots$ we let $\mathcal{C}^k$ be the space of continuous functions from $[0,\infty)$ to $[0,\infty)^k$ endowed with the topology of uniform convergence on compact sets. The space of probability measures on $\mathcal{C}^k$ admits a metric which is compatible with the topology of weak convergence, and the resulting metric space is complete and separable (see e.g. \cite[Chapter 2]{Bi}).

\begin{theorem} \label{Theorem_multitime_convergence}
For any $\beta\ge 4$, $T_0>0$ and $k=1,2,\dots$ the processes
\eq\label{eq_differences_vector}
\big(X_N^N(NT_0+t;N)-X_{N-1}^{N-1}(NT_0+t;N),\ldots,X^{N-k+1}_{N-k+1}(NT_0+t;N)-X^{N-k}_{N-k}(NT_0+t;N)\big),\;\; t\ge 0
\en
converge in distribution on $\mathcal{C}^k$. Moreover, the law of the limiting process is that of the unique weak solution $\big(R_1(t),\,R_2(t),\,\ldots,\,R_k(t)\big)$, $t\ge0$ to the system of SDEs
\begin{equation}
\label{eq_SDE_for_diffs}
\begin{split}
& \mathrm{d}R_i(t)=\frac{(\beta/2-1)\,\mathrm{d}t}{R_i(t)}\,
\mathrm{d}t-\frac{(\beta/2-1)\,\mathrm{d}t}{R_{i+1}(t)}+\mathrm{d}B_i(t)-\mathrm{d}B_{i+1}(t),\quad i=1,2,\ldots,k-1,\\
& \mathrm{d}R_k(t)=\frac{(\beta/2-1)\,\mathrm{d}t}{R_k(t)}-\sqrt{\frac{\beta}{2
T_0}}\,\mathrm{d}t+\mathrm{d}B_k(t)-\mathrm{d}B_{k+1}(t),
\end{split}
\end{equation}
where $B_1,\,B_2,\,\ldots,\,B_{k+1}$ are i.i.d.\ one-dimensional standard Brownian motions, and
the initial condition is chosen according to the product Gamma distribution of Theorem
\ref{Theorem_convergence_fixed_time}.
\end{theorem}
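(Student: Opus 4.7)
I would follow a Stroock--Varadhan martingale problem argument: first derive the explicit semimartingale decomposition of $R_i^N$ from \eqref{eq_betaWarren}, then prove tightness of $\{R^N\}_N$ in $\mathcal{C}^k$, identify every subsequential limit as a solution of the martingale problem for the generator of \eqref{eq_SDE_for_diffs}, and finally conclude with weak uniqueness of \eqref{eq_SDE_for_diffs}.

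Setting $\ell := N-i+1$, an application of \eqref{eq_betaWarren} to $X^{\ell}_{\ell}$ and $X^{\ell-1}_{\ell-1}$ gives the drift of $R_i^N$ as $(\beta/2-1)(S_1 - S_2 - S_3 + S_4)$ with
\[
S_1 = \sum_{j=1}^{\ell-1}\frac{1}{X^\ell_\ell - X^{\ell-1}_j},\; S_2 = \sum_{j=1}^{\ell-1}\frac{1}{X^\ell_\ell - X^\ell_j},\; S_3 = \sum_{j=1}^{\ell-2}\frac{1}{X^{\ell-1}_{\ell-1} - X^{\ell-2}_j},\; S_4 = \sum_{j=1}^{\ell-2}\frac{1}{X^{\ell-1}_{\ell-1} - X^{\ell-1}_j},
\]
and the martingale part $W^\ell_\ell - W^{\ell-1}_{\ell-1}$. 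The summand $j=\ell-1$ of $S_1$ equals $1/R_i^N$ and $j=\ell-2$ of $S_3$ equals $1/R_{i+1}^N$; after extracting these singular boundary terms, the four remaining bulk pieces are precisely of the type handled by Lemmas \ref{lemma_sum_inverse}, \ref{lemma_one_inverse}, and \ref{Lemma_sum_inverse_cross_level} (applied at levels $\ell$ and $\ell-1$, with identical proofs since $\ell = N-i+1$ for $i$ fixed), each converging in $L^1$ to $\sqrt{2/(\beta T_0)}$ uniformly in time on compacts; the four bulk limits cancel pairwise. Hence the drift of $R_i^N$ converges in $L^1$, uniformly in $t$ on compact sets, to $(\beta/2-1)(1/R_i^N - 1/R_{i+1}^N)$. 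For tightness in $\mathcal{C}^k$ I would combine: convergence of the initial distribution to i.i.d.\ Gammas (Theorem \ref{Theorem_convergence_fixed_time}); the explicit covariance structure of the martingale part; and the uniform-in-time moment bound $\sup_N \ev[(R_i^N(s))^{-p}] < \infty$ for $p<\beta/2$, derived from the scaling identity \eqref{eq_in_dist} together with Theorem \ref{Theorem_convergence_fixed_time}.

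The main obstacle is the bottom coordinate: the above identifies the drift of $R_k^N$ as $(\beta/2-1)(1/R_k^N - 1/R_{k+1}^N)$, but in \eqref{eq_SDE_for_diffs} the random term $(\beta/2-1)/R_{k+1}^N$ is replaced by the deterministic constant $\sqrt{\beta/(2T_0)}$. To justify this substitution inside the martingale problem, I would invoke Theorem \ref{Theorem_convergence_fixed_time} with $k$ replaced by $k+1$: at every fixed time $s$ the joint law of $(R_1^N(s),\ldots,R_{k+1}^N(s))$ converges to $k+1$ i.i.d.\ Gammas, so $R_{k+1}^N(s)$ is asymptotically independent of $(R_1^N(s),\ldots,R_k^N(s))$ with $\ev[(\beta/2-1)/R_{k+1}^N(s)] \to \sqrt{\beta/(2T_0)}$; testing the martingale problem against a bounded functional of $R^N$ up to time $s<t$, Fubini and the uniformity in $s$ from \eqref{eq_in_dist} show that the expectation of the discrepancy $\int_s^t [(\beta/2-1)/R_{k+1}^N(u) - \sqrt{\beta/(2T_0)}]\,\partial_k f(R^N(u))\,du$ vanishes, which is exactly what the martingale problem requires. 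Finally, for weak uniqueness of \eqref{eq_SDE_for_diffs} (to be established in Subsection \ref{Section_uniqueness}) I would use a localized Girsanov transformation removing the cross-coordinate drifts $-(\beta/2-1)/R_{i+1}$ and the constant $-\sqrt{\beta/(2T_0)}$; under the new measure the system decouples into $k$ independent Bessel-type SDEs $dR_i = (\beta/2-1)R_i^{-1}dt + dB_i'$ of dimension $\beta\ge 4$, for which weak uniqueness is classical, with the Novikov condition verified up to exit times from $\{\min_i R_i > \varepsilon\}$ and the global solution reassembled via stopping-time concatenation.
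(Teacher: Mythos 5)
Your overall architecture (tightness, identification of limit points via a martingale problem, weak uniqueness via a localized Girsanov reduction to Bessel processes) is the same as the paper's, and your treatment of the coordinates $i=1,\dots,k-1$ — extracting the terms $1/R^N_i$, $1/R^N_{i+1}$ and cancelling the four bulk sums via Lemmas \ref{lemma_sum_inverse}, \ref{Lemma_sum_inverse_cross_level} — matches \eqref{eq_prelimit_SDE}, \eqref{eq_S_remainder}. The genuine gap is at the bottom coordinate. You apply \eqref{eq_betaWarren} to level $N-k$ as well, identify the drift of $R^N_k$ as $(\beta/2-1)\big(1/R^N_k-1/R^N_{k+1}\big)+o(1)$, and then propose to replace $(\beta/2-1)/R^N_{k+1}$ by the constant $\sqrt{\beta/(2T_0)}$ by invoking Theorem \ref{Theorem_convergence_fixed_time} with $k+1$ coordinates. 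This does not work as stated, for two reasons. First, convergence in distribution of $R^N_{k+1}(u)$ to a Gamma law gives no control of $\ev\big[1/R^N_{k+1}(u)\big]$ without uniform integrability of the reciprocal; your claimed bound $\sup_N\ev\big[(R^N_i(s))^{-p}\big]<\infty$ for $p<\beta/2$ does not follow from the scaling identity \eqref{eq_in_dist} together with Theorem \ref{Theorem_convergence_fixed_time}, since the latter is a pure weak-convergence statement (obtaining such inverse-moment estimates would require redoing the density analysis of Section \ref{Section_fixed} near the origin, uniformly in $N$). Second, and more fundamentally, the martingale-problem test requires that
\begin{equation*}
\ev\Big[\,G\big(R^N(u):\,0\le u\le s\big)\int_s^t\Big(\tfrac{\beta/2-1}{R^N_{k+1}(u)}-\sqrt{\tfrac{\beta}{2T_0}}\Big)\,\tfrac{\partial f}{\partial x_k}\big(R^N(u)\big)\,\mathrm{d}u\Big]\longrightarrow 0,
\end{equation*}
i.e.\ an asymptotic decorrelation of $1/R^N_{k+1}(u)$ from a functional of the path of $(R^N_1,\dots,R^N_k)$ on $[0,s]$ jointly with its value at the later time $u$. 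The fixed-time theorem only gives asymptotic independence of the $k+1$ spacings at a single time, so this multi-time factorization is not available; assuming it amounts to assuming the very decoupling the theorem is meant to establish.

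The paper circumvents this entirely by describing the pair $(X^{N-k+1}_{N-k+1},X^{N-k}_{N-k})$ in a mixed way: level $N-k+1$ via the multilevel SDEs \eqref{eq_betaWarren}, but level $N-k$ via Proposition \ref{Proposition_GS_restriction}, i.e.\ as an autonomous Dyson Brownian motion with drift $\frac{\beta}{2}\sum_j\big(X^{N-k}_{N-k}-X^{N-k}_j\big)^{-1}$ in its own filtration (see \eqref{eq_prelimit_SDE_2}, \eqref{eq_S_hat_remainder}). Then the constant $-\sqrt{\beta/(2T_0)}$ arises from the mismatch between the coefficients $\beta/2$ and $\beta/2-1$ as the $L^1$-limit of full-level sums (Lemmas \ref{lemma_sum_inverse}, \ref{Lemma_sum_inverse_cross_level}), and the $(k+1)$-st spacing never enters. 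You would need this device (or an equally effective substitute) to close your argument. Two secondary points: your tightness sketch also leans on the unproven inverse-moment bound, whereas the paper uses a pathwise comparison with Bessel processes plus the second-moment estimate of Lemma \ref{Lemma_inverse_expectation_squared}; and in the uniqueness step the Girsanov transform does not make the coordinates independent — the driving noises $B_i-B_{i+1}$ remain correlated — so one must argue, as the paper does, via pathwise uniqueness of each Bessel equation given its driving Brownian motion (together with a no-hitting-zero statement such as Lemma \ref{lemma_no_zero} to let the localization parameter go to zero), rather than via weak uniqueness of $k$ independent one-dimensional equations.
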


At this point, Theorems \ref{Theorem_multitime_convergence_intro} and \ref{Theorem_multitime_convergence_intro_2} follow from Theorems \ref{Theorem_convergence_fixed_time} and \ref{Theorem_multitime_convergence}, together with the fact that \eqref{eq_SDE_for_diffs} is the system of SDEs for the differences of coordinates in \eqref{eq_SDE_for_coords} and the weak uniqueness for these two systems of SDEs (established in Theorems \ref{Theorem_uniqueness}, \ref{Theorem_uniqueness_2} below).

\medskip

The validity of Theorem \ref{Theorem_multitime_convergence} for all $k$ implies the following
curious property of the process $\big(R_1(t),\,R_2(t),\,\ldots,\,R_k(t)\big)$, $t\ge0$ for any
fixed $k$: under the product Gamma initial condition of Theorems
\ref{Theorem_convergence_fixed_time} and \ref{Theorem_multitime_convergence} the process of the
first $(k-1)$ coordinates $\big(R_1(t),\,R_2(t),\,\ldots,\,R_{k-1}(t)\big)$ solves a closed system
of SDEs of the form \eqref{eq_SDE_for_diffs} (with $k$ replaced by $(k-1)$). In Section
\ref{Section_Limit_properties} we discuss how a direct proof of this fact could be obtained.

\medskip

In the rest of this subsection we give an informal argument explaining the validity of Theorem \ref{Theorem_multitime_convergence}. The following three subsections are then devoted to a rigorous proof of that theorem.

\medskip

According to Theorem \ref{Theorem_GS_multilevel} and Proposition \ref{Proposition_GS_restriction} the process of \eqref{eq_differences_vector} satisfies the SDEs
\begin{equation} \label{eq_prelimit_SDE}
\begin{split}
\mathrm{d} \bigl( X_{N-a}^{N-a}(NT_0+t;N)-X_{N-a-1}^{N-a-1}(NT_0+t;N)\bigr)
=\mathrm{d} W^{N-a}_{N-a}(NT_0+t)- \mathrm{d} W^{N-a-1}_{N-a-1}(NT_0+t) \\
+\frac{(\beta/2-1)\,\mathrm{d}t}{X_{N-a}^{N-a}(NT_0+t;N)-X_{N-a-1}^{N-a-1}(NT_0+t;N)}
-\frac{(\beta/2-1)\,\mathrm{d}t}{X_{N-a-1}^{N-a-1}(NT_0+t;N)-X_{N-a-2}^{N-a-2}(NT_0+t;N)} \\
+ S_a(t;N)\,\mathrm{d}t
\end{split}
\end{equation}
for $a=0,\,1,\,\ldots,\,k-2$, and
\begin{equation}
\begin{split}
\label{eq_prelimit_SDE_2} \mathrm{d} \bigl( X_{N-k+1}^{N-k+1}(NT_0+t;N)-X_{N-k}^{N-k}(NT_0+t;N)
\bigr)=\mathrm{d} W^{N-k+1}_{N-k+1}(NT_0+t)- \mathrm{d} B(NT_0+t) \\
+\frac{(\beta/2-1)\,\mathrm{d}t}{X_{N-k+1}^{N-k+1}(NT_0+t;N)-X_{N-k}^{N-k}(NT_0+t;N)} +
\hat S_k(t;N)\,\mathrm{d}t.
\end{split}
\end{equation}
Here $W_N^N,\,W^{N-1}_{N-1},\,\ldots,\,W_{N-k}^{N-k}$ and $B$ are i.i.d.\ standard Brownian
motions,
\begin{equation} \label{eq_S_remainder}
\begin{split}
&S_a(t;N)=-\sum_{i=1}^{N-a-1}\frac{\beta/2-1}{X_{N-a}^{N-a}(NT_0+t;N)-X_{N-a}^{i}(NT_0+t;N)}\\
&+\sum_{i=1}^{N-a-2} \Big(\frac{\beta/2-1}{X_{N-a}^{N-a}(NT_0+t;N)-X_{N-a-1}^{i}(NT_0+t;N)}
-\frac{\beta/2-1}{X_{N-a-1}^{N-a-1}(NT_0+t;N)-X_{N-a-1}^{i}(NT_0+t;N)} \Big)\\
&+\sum_{i=1}^{N-a-3}\frac{\beta/2-1}{X_{N-a-1}^{N-a-1}(NT_0+t;N)-X_{N-a-2}^{i}(NT_0+t;N)}\,,
\end{split}
\end{equation}
$a=0,\,1,\,\ldots,\,k-2$, and
\begin{equation} \label{eq_S_hat_remainder}
\begin{split}
&\hat S_k(t;N)=-\sum_{i=1}^{N-k}\frac{\beta/2-1}{X_{N-k+1}^{N-k+1}(NT_0+t;N)-X_{N-k+1}^{i}(NT_0+t;N)}\\
&+\sum_{i=1}^{N-k-1} \Big(\frac{\beta/2-1}{X_{N-k+1}^{N-k+1}(NT_0+t;N)-X_{N-k}^{i}(NT_0+t;N)}
-\frac{\beta/2}{X_{N-k}^{N-k}(NT_0+t;N)-X_{N-k}^{i}(NT_0+t;N)}\Big).
\end{split}
\end{equation}
For \textit{fixed} $t\ge0$ the expressions in \eqref{eq_S_remainder}, \eqref{eq_S_hat_remainder} can be analyzed using Lemmas
\ref{lemma_sum_inverse}, \ref{Lemma_sum_inverse_cross_level} which yield
$$
\lim_{N\to\infty} S_a(t;N)=0,\;a=0,\,1,\,\ldots,\,k-2, \quad\text{and}\quad
\lim_{N\to\infty} \hat S_k(t;N)=\sqrt{\frac{\beta}{2 T_0}}.
$$
This suggests that, as $N\to\infty$, a solution of the SDEs \eqref{eq_prelimit_SDE},
\eqref{eq_prelimit_SDE_2} should converge to a solution of the SDE \eqref{eq_SDE_for_diffs}. To
give a rigorous proof of this conclusion, we proceed in the following steps: in Section
\ref{Section_tightness} we show that the sequence of processes in \eqref{eq_differences_vector} is
tight on $\mathcal{C}^k$; then, in Section \ref{Section_uniqueness} we prove weak uniqueness for
the system of SDEs \eqref{eq_SDE_for_diffs}; finally, in Section \ref{Section_convergence} we show
that any limit point of the sequence in \eqref{eq_differences_vector} is a weak solution of
\eqref{eq_SDE_for_diffs}. Together these three steps give Theorem
\ref{Theorem_multitime_convergence}.

%%%%%%%%%%%%%%%%%%%%%%%%%%%%
\subsection{Tightness}\label{Section_tightness}
%%%%%%%%%%%%%%%%%%%%%%%%%%%%

In this section we prove the tightness of the sequence of processes in \eqref{eq_differences_vector}. Since the tightness of a sequence of $\rr^k$-valued processes is equivalent to the tightness of the sequences of their components, we need to prove the following statement.

\begin{proposition} \label{Proposition_tightness}
For any $\beta\ge 4$ and $T_0\ge 0$ the sequence of processes
$$X_N^N(NT_0+t;N)-X_{N-1}^{N-1}(NT_0+t;N),\;\; t\ge 0,$$
indexed by $N\in\nn$, is tight on $\mathcal C^1$.
\end{proposition}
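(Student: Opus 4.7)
My plan is to verify Aldous' tightness criterion together with the compact containment condition for the sequence of processes $R^N(t) := X_N^N(NT_0+t;N) - X_{N-1}^{N-1}(NT_0+t;N)$. The starting point is the semimartingale decomposition \eqref{eq_prelimit_SDE_2} specialized to $k=1$:
\[
R^N(t) = R^N(0) + M^N(t) + \int_0^t \frac{\beta/2-1}{R^N(u)}\,\mathrm{d}u + \int_0^t \hat S_1(u;N)\,\mathrm{d}u,
\]
where $M^N$ is a continuous martingale with $\langle M^N\rangle_t = 2t$. Tightness at each fixed time is immediate from the Brownian scaling \eqref{eq_in_dist} combined with Theorem \ref{Theorem_convergence_fixed_time}, which identifies the weak limit of the one-dimensional marginals as a Gamma law.

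The bulk of the work is to establish the uniform moment bounds
\[
\sup_{N \in \nn,\, u \in [0,T]} \ev\bigl[R^N(u)^{-p}\bigr] < \infty \qquad\text{and}\qquad
\sup_{N \in \nn,\, u \in [0,T]} \ev\bigl[\hat S_1(u;N)^2\bigr] < \infty
\]
for some $p \in (1, \beta/2)$, an interval which is nonempty since $\beta \ge 4$. For the negative moment bound, I would condition on all coordinates except $X_N^N$ and $X_{N-1}^{N-1}$ and read off from the two-level density \eqref{eq_2_level_density} that the conditional density of $R^N(u)$ is proportional to $z^{\beta/2-1}$ times a positive smooth function, which produces finite conditional negative moments of any order $< \beta/2$ with constants that can be controlled uniformly in $N$ via concentration of the remaining coordinates. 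For the second bound, I would produce an $L^2$ sharpening of Lemmas \ref{lemma_sum_inverse}--\ref{Lemma_sum_inverse_cross_level} by repeating the integration by parts trick of Lemma \ref{Lemma_inverse_expectation_squared} once for the top-level sum and once for the cross-level sum that together constitute $\hat S_1$.

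Given these bounds, the compact containment condition follows from Doob's maximal inequality applied to $M^N$ combined with Fubini's theorem. To verify Aldous' condition, fix stopping times $\tau_N \le T$ and deterministic $\delta_N \to 0$. Optional stopping yields $\ev[(M^N(\tau_N+\delta_N)-M^N(\tau_N))^2] = 2\delta_N \to 0$, while a Fubini--plus--H\"older argument produces
\[
\ev\!\left[\int_{\tau_N}^{\tau_N+\delta_N} \frac{\mathrm{d}u}{R^N(u)}\right]
\le \int_0^T \pp(u-\delta_N \le \tau_N \le u)^{1/q}\, \ev[R^N(u)^{-p}]^{1/p} \,\mathrm{d}u
\le C\, T^{1/p} \delta_N^{1/q},
\]
with $1/p + 1/q = 1$, after applying H\"older once more to the outer integral and using $\int_0^T \pp(u-\delta_N \le \tau_N \le u)\,\mathrm{d}u \le \delta_N$; the $\hat S_1$ term is handled analogously using the uniform $L^2$ bound and Cauchy--Schwarz. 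Together these estimates show $R^N(\tau_N+\delta_N) - R^N(\tau_N) \to 0$ in probability, completing the verification of Aldous' condition.

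The main obstacle is the uniform-in-$N$ negative moment bound on $R^N(u)$. While Theorem \ref{Theorem_convergence_fixed_time} identifies the weak limit as Gamma, upgrading that weak convergence to uniform integrability of $R^N(u)^{-p}$ requires a direct analysis of the conditional density from \eqref{eq_2_level_density}, most delicately a uniform lower bound on the conditional partition function $\int_0^{X_N - X_{N-1}} z^{\beta/2-1} g(z)\,\mathrm{d}z$. Once this technical step is in hand, everything else reduces to a routine semimartingale calculation.
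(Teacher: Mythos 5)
Your approach (Aldous--Rebolledo plus a uniform negative moment bound) is genuinely different from the paper's, and the difference is not cosmetic: the paper's proof of Proposition \ref{Proposition_tightness} is engineered specifically to \emph{avoid} any negative moment estimate on $\R^N(t)$. The paper splits the modulus of continuity into up- and down-fluctuations. For the upward fluctuations, where the singular drift $(\beta/2-1)/\R^N$ is dangerous, the paper observes that all non-singular drift terms in \eqref{eq_R_SDE} are non-positive (by interlacing, see \eqref{pos1}--\eqref{pos2}), so $\R^N$ is pathwise dominated on each subinterval by a Bessel process $\widetilde{\R}^N_m$ of integer dimension $D\ge\beta/2$ started from the same value and driven by the same Brownian motion; the modulus of continuity of the Bessel process is then estimated via the Euclidean-norm representation $\|B^{(D)}\|$, with no need to touch $1/\R^N$ at all. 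For the downward fluctuations the singular drift has the right sign and is simply discarded, and the remaining global drifts are handled by the $L^2$ estimate of Lemma \ref{Lemma_inverse_expectation_squared}. So the paper trades the negative moment bound you propose for an SDE-comparison-to-Bessel argument.

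This matters because the step you flag as ``the main obstacle'' is indeed a genuine obstacle, and I think you understate it. From the two-level density \eqref{eq_2_level_density}, the conditional law of $\R^N(u)$ given all coordinates other than $X^{N-1}_{N-1}$ is $\propto z^{\beta/2-1} g(z)$ on $(0, X^N_N - X^N_{N-1})$, with $g$ smooth and positive; rescaling by the support length shows the conditional expectation of $\R^N(u)^{-p}$ is of order $(X^N_N-X^N_{N-1})^{-p}$. Therefore the bound $\sup_{N,u}\ev[\R^N(u)^{-p}]<\infty$ for some $p\in(1,\beta/2)$ does not follow from a generic ``concentration of the remaining coordinates'' argument: it reduces to a uniform-in-$N$ bound on a $p$-th negative moment of the \emph{top-level gap} $X^N_N - X^N_{N-1}$ with $p>1$. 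The paper establishes (Lemma \ref{lemma_one_inverse}) only that the \emph{first} negative moment of this gap tends to zero, and even that requires the delicate conditional-density arguments of \eqref{eq_x4}--\eqref{eq_x7} together with bulk rigidity \eqref{eq_bulk_rigidity}. Extending that analysis to a uniform $p$-th negative moment bound with $p>1$ is plausible (near $z=0$ the conditional density of the gap behaves like $z^\beta$, so moments up to order $\beta+1$ are conditionally finite, and $p\in(1,2)\subset(1,\beta/2)$ would suffice even at $\beta=4$), but it is a substantial further development, essentially a strengthening of Lemma \ref{lemma_one_inverse}, and it cannot be waved through by citing the two-level density alone. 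By contrast, the $L^2$ bound you need for $\hat S_1$ is fine: the cross-level sum is dominated termwise by the top-level sum via interlacing, so Lemma \ref{Lemma_inverse_expectation_squared} gives it directly. In short, your route is viable in principle, but it takes on precisely the negative-moment estimate that the paper's Bessel comparison was designed to bypass, and that estimate is the hard content, not a technicality.
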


\begin{proof}
To simplify the exposition we introduce the notation $\R^N(t):=X^{N}_{N}(NT_0+t)-X^{N-1}_{N-1}(NT_0+t)$, $t\ge0$. To prove Proposition \ref{Proposition_tightness} we aim to apply the criterion for tightness of \cite[Corollary 3.7.4]{EK} and need to prove the following two statements:
\begin{enumerate}
\item For every fixed $t\ge0$ the sequence of random variables $\R^N(t)$, $N=1,\,2,\,\ldots$ is tight on $\mathbb R$.
\item For every fixed $T>0$ and $\Delta>0$ there exists a $\delta>0$ such that
\begin{equation} \label{eq_tightness2}
\limsup_{N\to\infty} \; \pp\Big(\sup_{0\le t_1<t_2\le T,\,t_2-t_1<\delta} |\R^N(t_2)-\R^N(t_1)|>\Delta\Big)<\Delta.
\end{equation}
\end{enumerate}
The first statement is a consequence of Theorem \ref{Theorem_convergence_fixed_time}, so that from now on we fix $T>0$ and $\Delta>0$ and focus on the proof of \eqref{eq_tightness2}.

\bigskip

Theorem \ref{Theorem_GS_multilevel} and Proposition \ref{Proposition_GS_restriction} show that the
process $(X^{N}_{N},X^{N-1}_{N-1})$ solves the following system of SDEs in the filtration
generated by the processes $(X^{N}_1,X^{N}_2,\ldots,X^{N}_{N})$ and
$(X^{N-1}_1,X^{N-1}_2,\ldots,X^{N-1}_{N-1})$:
\begin{eqnarray}
&& \mathrm{d}X^{N-1}_{N-1}(t;N)=\frac{\beta}{2}\,\sum_{j=1}^{N-2} \frac{\mathrm{d}t}{X^{N-1}_{N-1}(t;N)-X^{N-1}_j(t;N)}+\mathrm{d}B_1(t), \\
&& \mathrm{d}X^{N}_{N}(t;N)=\sum_{j=1}^{N-1}
\frac{(\beta/2-1)\,\mathrm{d}t}{X^{N}_{N}(t;N)-X^{N-1}_j(t;N)} -\sum_{j=1}^{N-1}
\frac{(\beta/2-1),\mathrm{d}t}{X^{N}_{N}(t;N)-X^{N}_j(t;N)}+\mathrm{d}B_{2}(t)
\end{eqnarray}
with $B_{1}$, $B_{2}$ being suitable independent one-dimensional standard Brownian motions. Consequently,
\begin{equation} \label{eq_R_SDE}
\begin{split}
\mathrm{d}\R^N(t)&=\frac{(\beta/2-1)\,\mathrm{d}t}{\R^N(t)}+\sqrt{2}\,\mathrm{d}B_3(t)
-\frac{\beta}{2}\,\sum_{j=1}^{N-1} \frac{\mathrm{d}t}{X^{N}_{N}(NT_0+t;N)-X^{N}_j(NT_0+t;N)} \\
&\;\;\;+\sum_{j=1}^{N-2} \frac{(\beta/2-1)\,\mathrm{d}t}{X^{N}_{N}(NT_0+t;N)-X^{N-1}_j(NT_0+t;N)}
-\sum_{j=1}^{N-1} \frac{(\beta/2-1)\,\mathrm{d}t}{X^{N}_{N}(NT_0+t;N)-X^{N}_j(NT_0+t;N)}
\end{split}
\end{equation}
with a one-dimensional standard Brownian motion $B_3$.

\medskip

Next, we cover the interval $[0,T]$ by intervals $I_m:=\big[m\delta,\min((m+2)\delta,T)\big]$, $m=0,\,1,\,\ldots,\,\lfloor T/\delta \rfloor$ and use the triangle inequality together with the union bound to estimate the probability in \eqref{eq_tightness2} from above by
\begin{equation}\label{eq_tightUBD} \sum_{m=0}^{\lfloor T/\delta \rfloor} \; \pp\Big(\sup_{t\in I_m}
\big(\R^N(t)-\R^N(m\delta)\big)>\Delta/2\Big) + \sum_{m=0}^{\lfloor T/\delta \rfloor} \;
\pp\Big(\inf_{t\in I_m} \big(\R^N(t)-\R^N(m\delta)\big)<-\Delta/2\Big).
\end{equation}

\smallskip

To bound the first sum further we choose an integer $D\ge \beta/2$ and introduce for each $m$ the
auxiliary process $\widetilde{\R}^N_{m}$ given by the unique strong solution of
\begin{equation}
\label{eq_R_modified_SDE} \mathrm{d}\widetilde{\R}^N_{m}(t)=\frac{(D-1)\,
\mathrm{d}t}{\widetilde{\R}^N_{m}(t)}+\sqrt{2}\,\mathrm{d}B_{3}(t),\;\; t\ge m\delta, \qquad
\widetilde{\R}^N_{m}(m\delta)=\R^N(m\delta)
\end{equation}
where $B_3$ is the Brownian motion from \eqref{eq_R_SDE}. Dividing \eqref{eq_R_modified_SDE} by
$\sqrt{2}$ reveals that $\widetilde{\R}^N_{m}$, $t\ge m\delta$ is the $\sqrt{2}$ multiple of a
Bessel process of dimension $D$ started from $2^{-1/2}\,\R^N(m\delta)$ (see e.g. \cite[Chapter
XI]{RY} for the definition and properties of Bessel processes).

\medskip

We claim that the inequality $\widetilde{\R}^N_{m}(t)\ge \R^N(t)$ holds for all $t\ge m\delta$
with probability one. Indeed, note that due to interlacing of the coordinates $X^j_i$, $1\le i\le
j\le N$:
\begin{equation}
 \sum_{j=1}^{N-2} \frac{(\beta/2-1)}{X^{N}_{N}(NT_0+t;N)-X^{N-1}_j(NT_0+t;N)}
-\sum_{j=1}^{N-1} \frac{(\beta/2-1)}{X^{N}_{N}(NT_0+t;N)-X^{N}_j(NT_0+t;N)}\le 0, \label{pos1}
\end{equation}
\begin{equation}
 -\frac{\beta}{2}\,\sum_{j=1}^{N-1} \frac{1}{X^{N}_{N}(NT_0+t;N)-X^{N}_j(NT_0+t;N)}\le 0.
\label{pos2}
\end{equation}
This allows to derive the desired comparison inequality between $\widetilde{\R}^N_{m}$ and $\R^N$ by arguing in the spirit of the classical comparison theorems for SDEs (see e.g. \cite[Chapter IX, Theorem 3.7]{RY} and \cite[Chapter 5, Proposition 2.18]{KS}). Indeed, define the stopping times
$$
\tau_\eps:=\inf\{t\ge m\delta:\;\widetilde{\R}^N_{m}(t)\le\R^N(t)-\eps\},\quad\eps>0.
$$
Due to the almost sure continuity of the trajectories of $\widetilde{\R}^N_{m}$ and $\R^N$ we
conclude that if $\tau_\eps$ is finite, then
$\widetilde{\R}^N_{m}(\tau_\eps)=\R^N(\tau_\eps)-\eps$. Moreover, on the event $\tau_\eps<\infty$
we can use \eqref{eq_R_SDE} and \eqref{eq_R_modified_SDE} to write for $t\ge\tau_\eps$
\begin{equation} \label{eq_x20}
\begin{split}
&\widetilde{\R}^N_{m}(t)-\R^N(t)=\int_{\tau_\eps}^t\bigg(\frac{D-1}{\widetilde{\R}^N_m(s)}-\frac{\beta/2-1}{\R^N(s)}
+\frac{\beta}{2} \sum_{j=1}^{N-1}\frac{1}{X^{N}_{N}(NT_0+s)-X^{N}_j(NT_0+s)} \\
&\quad\quad\quad\quad\quad\;-\bigg(\sum_{j=1}^{N-2} \frac{(\beta/2-1)}{X^{N}_{N}(NT_0+s)-X^{N-1}_j(NT_0+s)}
 -\sum_{j=1}^{N-1} \frac{(\beta/2-1)}{X^{N}_{N}(NT_0+s)-X^{N}_j(NT_0+s)}\bigg)\bigg)\,\mathrm{d}s.
\end{split}
\end{equation}
 However, for small $(t-\tau_\eps)$ the left-hand side of \eqref{eq_x20} is
negative, while the right-hand side of \eqref{eq_x20} is positive in view of
\eqref{pos1},\eqref{pos2} and the non-negativity of the processes $\widetilde{\R}^N_{m}$ and
$\R^N$. This contradiction proves that none of the stopping times $\tau_\eps$, $\eps>0$ can be
finite with positive probability.

\medskip

Thanks to the established comparison result we can now bound the first sum in \eqref{eq_tightUBD} by
\begin{equation}
\label{eq_x21} \sum_{m=0}^{\lfloor T/\delta \rfloor} \; \pp\Big(\sup_{t\in I_m}
\big(\widetilde{\R}^N_{m}(t)-\widetilde{\R}^N_{m}(m\delta)\big)>\Delta/2\Big).
\end{equation}
Moreover, we recall that the Bessel process of dimension $D$ describes the evolution of the
Euclidean norm of a $D$-dimensional standard Brownian motion, and the triangle inequality for the
Euclidean norm:
$$
\big|(a_1^2+a_2^2\dots+a_D^2)^{1/2}-(b_1^2+\dots+b_D^2)^{1/2}\big|\le \big(|a_1-b_1|^2+|a_2-b_2|^2+\ldots+|a_D-b_D|^2\big)^{1/2}.
$$
This allows to bound the expression in \eqref{eq_x21} further by
\begin{equation}
\label{eq_x22} \big(1+\lfloor T/\delta \rfloor\big)\cdot D\cdot\pp\Big(\sup_{0<t<2\delta}
|B(t)|>\frac{\Delta}{2\sqrt{2D}}\Big),
\end{equation}
where $B$ is a one-dimensional standard Brownian motion (so that, in particular, $B(0)=0$). Using
the union bound and the explicitly known distribution of the running maximum of a standard
Brownian motion (see e.g. \cite[Section 2.8.A]{KS}) we see that, for any $\Delta>0$, the
expression in \eqref{eq_x22} tends to $0$ in the limit $\delta\to 0$. Therefore, for small enough
$\delta$ the first sum in \eqref{eq_tightUBD} is less than $\Delta/2$.

\medskip

To estimate the second sum in \eqref{eq_tightUBD} we first note that the interlacing of the
coordinates $X^j_i$, $1\le i\le j\le N$ implies
\begin{multline*}
 \frac{(\beta/2-1)}{\R^N}+ \sum_{j=1}^{N-2} \frac{(\beta/2-1)}{X^{N}_{N}(NT_0+t;N)-X^{N-1}_j(NT_0+t;N)}
-\sum_{j=1}^{N-1} \frac{(\beta/2-1)}{X^{N}_{N}(NT_0+t;N)-X^{N}_j(NT_0+t;N)}\\ \ge 0.
\end{multline*}
This and \eqref{eq_R_SDE} allow to bound the second sum in \eqref{eq_tightUBD} by
\begin{multline}\label{eq_tightUBD2}
\sum_{m=0}^{\lfloor T/\delta \rfloor}\,\Biggl[\pp\Big(\inf_{t\in I_m}
\big(B_3(t)-B_3(m\delta)\big)<-\frac{\Delta}{4}\Big) \\ + \pp\bigg(\int_{I_m} \sum_{j=1}^{N-2}
\frac{\beta/2\,\mathrm{d}t}{X^{N-1}_{N-1}(NT_0+t;N)-X^{N-1}_j(NT_0+t;N)} >
\frac{\Delta}{4}\bigg)\Biggr].
\end{multline}
The first type of summands in \eqref{eq_tightUBD2} can be again estimated using the explicit distribution of the running maximum of a standard Brownian motion, which reveals that their sum can be made smaller than $\Delta/4$ by choosing a $\delta$ small enough.

\smallskip

Finally, we bound the sum of the second type of summands in \eqref{eq_tightUBD2} by applying successively Markov's inequality, Jensen's inequality, Fubini's theorem and Lemma \ref{Lemma_inverse_expectation_squared}:
\begin{eqnarray*}
&& \sum_{m=0}^{\lfloor T/\delta \rfloor}\;\pp\bigg(\int_{I_m} \frac{\beta}{2}\,\sum_{j=1}^{N-2}
\frac{\mathrm{d}t}{X^{N-1}_{N-1}(NT_0+t;N)-X^{N-1}_j(NT_0+t;N)} > \frac{\Delta}{4}\bigg) \\
&\le& \frac{16}{\Delta^2}\,\sum_{m=0}^{\lfloor T/\delta \rfloor}\; \ev\bigg[\bigg(\int_{I_m}
\frac{\beta}{2}\,\sum_{j=1}^{N-2} \frac{\mathrm{d}t}{X^{N-1}_{N-1}(NT_0+t;N)-X^{N-1}_j(NT_0+t;N)}\bigg)^2\bigg] \\
&\le& \frac{16}{\Delta^2}\,\sum_{m=0}^{\lfloor T/\delta \rfloor}\;\ev\bigg[|I_m|\,\int_{I_m}
\frac{\beta^2}{4}\;
\bigg(\sum_{j=1}^{N-2} \frac{1}{X^{N-1}_{N-1}(NT_0+t;N)-X^{N-1}_j(NT_0+t;N)}\bigg)^2\,\mathrm{d}t\bigg] \\
&=& \frac{16}{\Delta^2}\,\sum_{m=0}^{\lfloor T/\delta \rfloor}\;|I_m|\,\int_{I_m}
\frac{\beta^2}{4} \;\ev\bigg[\bigg(\sum_{j=1}^{N-2}
\frac{1}{X^{N-1}_{N-1}(NT_0+t;N)-X^{N-1}_j(NT_0+t;N)}\bigg)^2\bigg]\,\mathrm{d}t\\
 &\le&
C\,\frac{16}{\Delta^2}\,\sum_{m=0}^{\lfloor T/\delta \rfloor} |I_m|^2
\end{eqnarray*}
where $C>0$ is a suitable constant. Since $|I_m|=2\delta$, we see that the latter upper bound
tends to $0$ in the limit $\delta\to 0$. In particular, for small enough $\delta$ it is less than
$\Delta/4$.

\smallskip

All in all, we have shown that for $\delta$ small enough the expression in \eqref{eq_tightUBD} is less than $\Delta/2+\Delta/4+\Delta/4=\Delta$ for all $N$.
\end{proof}

%%%%%%%%%%%%%%%%%%%%%%%%%
\subsection{Limiting SDE}\label{Section_uniqueness}
%%%%%%%%%%%%%%%%%%%%%%%%%

The goal of this section is to prove the following theorem.

\begin{theorem} \label{Theorem_uniqueness}
For any $k=1,2,\dots$ and any initial condition $R(0)\in(0,\infty)^k$ the system of SDEs \eqref{eq_SDE_for_diffs} possesses a unique weak solution taking values in $[0,\infty)^k$. The solution $R$ is a Markov process and satisfies $R_i(t)>0$ for all $t\ge 0$ and $1\le i\le k$ with probability one.
\end{theorem}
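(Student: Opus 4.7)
The plan is to prove weak existence, weak uniqueness, strict positivity and the Markov property for \eqref{eq_SDE_for_diffs} in one stroke by a localized Girsanov change of measure that relates the target system to a collection of individually Bessel--type processes, as announced in the remark preceding the theorem.

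For existence I would start on a probability space carrying $(k+1)$ independent standard Brownian motions $\tilde B_1,\dots,\tilde B_{k+1}$. Setting $W_i=(\tilde B_i-\tilde B_{i+1})/\sqrt 2$ for $i=1,\dots,k$ turns each $W_i$ into an individual standard Brownian motion, and I let $\tilde R_i$ be the pathwise unique strong solution of
\[
d\tilde R_i(t)=\frac{\beta/2-1}{\tilde R_i(t)}\,dt+\sqrt 2\,dW_i(t),\qquad \tilde R_i(0)=R_i(0),
\]
so that $\tilde R_i/\sqrt 2$ is a Bessel process of dimension $\beta/2\ge 2$ and stays strictly positive at all times under the reference measure $P$. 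Set $\tau_n:=\inf\{t\ge 0:\min_{1\le i\le k}\tilde R_i(t)\le 1/n\}$, so that $\tau_n\uparrow\infty$ $P$-a.s. Define recursively $\eta_{k+1}\equiv 0$, $\eta_k\equiv\sqrt{\beta/(2T_0)}$ and $\eta_i=\eta_{i+1}+(\beta/2-1)/\tilde R_{i+1}$ for $i<k$. On $[0,\tau_n]$ each $\eta_j$ is bounded, so Novikov's condition is satisfied and the stochastic exponential of $-\sum_{i=1}^{k+1}\int\eta_i\,d\tilde B_i$ stopped at $\tau_n$ is a true $P$-martingale defining a probability measure $Q^{(n)}$ equivalent to $P$ on $\mathcal F_{\tau_n}$. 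Under $Q^{(n)}$ the processes $B^{(n)}_i:=\tilde B_i+\int_0^{\cdot\wedge\tau_n}\eta_i(s)\,ds$ are independent standard Brownian motions, and a direct substitution shows that $(\tilde R_1,\dots,\tilde R_k)$ solves \eqref{eq_SDE_for_diffs} on $[0,\tau_n]$ driven by the differences $B^{(n)}_i-B^{(n)}_{i+1}$. The $Q^{(n)}$ are consistent in $n$ and extend to a single measure $Q$ on path space.

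The main obstacle is to show that $\tau_n\uparrow\infty$ holds $Q$-almost surely, i.e.\ that the repulsive singular drift $(\beta/2-1)/R_i$ keeps each coordinate bounded away from $0$ even in the presence of the coupling drifts. I would argue by downward induction on the level $i$. The equation for $R_k$ is autonomous; it differs from the pure Bessel--type SDE $dU=((\beta/2-1)/U)\,dt+dB_k-dB_{k+1}$ only through an additional bounded drift, so a further (globally valid) Girsanov transformation reduces $R_k$ to a process distributionally equivalent to $\sqrt 2$ times a Bessel process of dimension $\beta/2\ge 2$, which never reaches zero. Assuming by induction that $R_{i+1}$ stays strictly positive, the coupling drift $-(\beta/2-1)/R_{i+1}$ in the $R_i$-equation is pathwise locally integrable; a comparison argument in the spirit of \cite[Chapter IX, Theorem 3.7]{RY} between $R_i$ and a Bessel process of dimension $\beta/2$ then yields the strict positivity of $R_i$.

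For weak uniqueness, let $R$ be any weak solution of \eqref{eq_SDE_for_diffs} and let $\tau_n^R$ be the analogous hitting time of level $1/n$. Running the inverse Girsanov transformation on $[0,\tau_n^R]$ (which is justified because the same boundedness estimates used above hold along any solution) produces an equivalent measure under which $R$ solves the decoupled Bessel--type SDEs; pathwise uniqueness for the latter determines the law of $R$ on $\mathcal F_{\tau_n^R}$ uniquely, and letting $n\to\infty$ gives weak uniqueness on $[0,\infty)$. The Markov property then follows from weak uniqueness together with the time-homogeneity of the coefficients in \eqref{eq_SDE_for_diffs} by the standard Stroock--Varadhan argument.
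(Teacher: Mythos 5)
Your overall route --- decouple the system into Bessel--type processes by a localized Girsanov change of measure, use pathwise uniqueness for the Bessel SDE to get uniqueness in law, and deduce the Markov property from weak uniqueness --- is the same as the paper's, and your existence and uniqueness steps essentially reproduce the paper's argument (including the observation that extending the consistent family $Q^{(n)}$ to one measure hinges precisely on showing that no coordinate reaches zero under the target dynamics).

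The genuine gap is in your proof of strict positivity, which is the step everything else rests on. In the induction step you compare $R_i$, whose drift is $\frac{\beta/2-1}{R_i}-\frac{\beta/2-1}{R_{i+1}}$, with the Bessel--type process $U$ of dimension $\beta/2$ driven by the same noise. Since the coupling term $-\frac{\beta/2-1}{R_{i+1}}$ is nonpositive, the comparison theorem of \cite[Chapter IX, Theorem 3.7]{RY} gives $R_i\le U$, i.e.\ an upper bound; the fact that $U$ never hits zero says nothing about whether $R_i$ does, so as stated the step fails. Pathwise local integrability of the coupling drift is also not the relevant condition --- what matters is its sign and size near the boundary, and for $\beta=4$ the Bessel dimension $\beta/2=2$ is critical, so one cannot even absorb a small negative perturbation by comparing with a lower-dimensional Bessel. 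The paper's way out is a localization plus removal (not comparison): set $\rho_\delta:=\inf\big\{t\ge0:\,R_{i+1}(t)\le\delta\big\}$; up to $\rho_\delta$ the coupling drift is bounded by $(\beta/2-1)/\delta$, Novikov's condition holds, and a Girsanov change of measure eliminates it, so that up to $\rho_\delta$ the process $2^{-1/2}R_i$ coincides pathwise with a Bessel process of dimension $\beta/2\ge2$ and stays strictly positive under the new, equivalent measure, hence also under the original one; then one lets $\delta\to0$, using the induction hypothesis applied to the closed subsystem $(R_{i+1},\dots,R_k)$ to get $\rho_\delta\to\infty$. With this replacement (and noting that the Girsanov removal of the constant drift $\sqrt{\beta/(2T_0)}$ in the $R_k$-equation is justified on each finite horizon $[0,T]$, which suffices since $T$ is arbitrary), your argument closes.
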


Our proof of Theorem \ref{Theorem_uniqueness} is based on a Girsanov change of measure that will simplify the SDEs in consideration. We refer the reader to \cite[Section 3.5]{KS} and \cite[Section 5.3]{KS} for general information about Girsanov's theorem and weak solutions of SDEs. We fix a $T>0$ and will establish all claims of Theorem \eqref{Theorem_uniqueness} on the time interval $[0,T]$. Clearly, then the theorem will follow from the arbitrariness of $T$.

\medskip

Take a $\delta\ge 0$. For a $[0,\infty)^k$-valued stochastic process $Y(t)=(Y_1(t),\dots,Y_k(t))$,
$t\in[0,T]$ define the stopping time
\begin{equation}
\label{eq_stop_time_def} \tau_\delta[Y]:=\inf\big\{t\in[0,T]:\;Y_i(t)\le\delta\;\text{ for some
}i\big\},
\end{equation}
 with the convention $\inf\,\emptyset=\infty$. Our first aim is to analyze
$\tau_0[R]$.

\begin{lemma} \label{lemma_no_zero}
Let $R$ be a weak solution of \eqref{eq_SDE_for_diffs} with a deterministic initial condition $R(0)\in(0,\infty)^k$. Then $\tau_0[R]=\infty$ almost surely.
\end{lemma}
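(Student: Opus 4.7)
The plan is to argue by reverse induction on $i\in\{1,2,\dots,k\}$: for every fixed $T>0$, we show
\[
 \mathbb P\bigl(R_i(t)>0\ \text{for all}\ t\in[0,T]\bigr)=1.
\]
Intersecting over the finitely many $i$ and then over a countable sequence $T\to\infty$ will yield $\tau_0[R]=\infty$ almost surely. The inductive structure is designed precisely so that at each step the Girsanov change of measure we need is triggered by a \emph{bounded} drift term, which makes Novikov's condition trivial and the equivalence of measures immediate.

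Base case $i=k$. The SDE for $R_k$ is
\[
 dR_k(t)=\frac{\beta/2-1}{R_k(t)}\,dt-\sqrt{\tfrac{\beta}{2T_0}}\,dt+dB_k(t)-dB_{k+1}(t),
\]
so besides the Bessel-type singular drift it only contains a \emph{constant} negative drift and a martingale of quadratic variation $2\,dt$. A standard constant-drift Girsanov change of measure, whose density is a genuine exponential martingale, produces an equivalent probability $\widetilde{\mathbb P}$ on $\mathcal F_T$ under which that constant drift is absent. Under $\widetilde{\mathbb P}$ the rescaled process $R_k/\sqrt 2$ solves the SDE of a Bessel process of dimension $\beta/2\ge 2$ started from $R_k(0)/\sqrt 2>0$, and Bessel processes of dimension at least $2$ never reach zero (see \cite[Chapter XI]{RY}). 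Equivalence of measures on $\mathcal F_T$ transfers this conclusion to $\mathbb P$.

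Inductive step. Fix $i<k$ and assume the claim for $i+1$. By path continuity and the inductive hypothesis, $m:=\inf_{t\in[0,T]}R_{i+1}(t)$ is strictly positive almost surely, so $\mathbb P(m>\delta)\to 1$ as $\delta\to 0$. Define
\[
 \sigma_\delta:=\inf\{t\in[0,T]:\,R_{i+1}(t)\le\delta\}\wedge T,
\]
and note that $\{m>\delta\}\subseteq\{\sigma_\delta=T\}$. On $[0,\sigma_\delta]$ the drift $-(\beta/2-1)/R_{i+1}$ appearing in the equation for $R_i$ is bounded deterministically by $(\beta/2-1)/\delta$, so the adapted process $(\beta/2-1)/R_{i+1}(t)\,\mathbf 1_{\{t\le\sigma_\delta\}}$ trivially satisfies Novikov's condition. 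The resulting Girsanov change of measure yields an equivalent $\widetilde{\mathbb P}_\delta$ on $\mathcal F_T$ under which, on $[0,\sigma_\delta]$, the SDE for $R_i$ reduces to
\[
 dR_i(t)=\frac{\beta/2-1}{R_i(t)}\,dt+dB_i(t)-d\widetilde B_{i+1}(t),
\]
where $\widetilde B_{i+1}$ is a $\widetilde{\mathbb P}_\delta$-Brownian motion independent of $B_i$. Hence $R_i/\sqrt 2$ is (marginally) a Bessel process of dimension $\beta/2\ge 2$ on $[0,\sigma_\delta]$, and so $R_i>0$ on $[0,\sigma_\delta]$ holds $\widetilde{\mathbb P}_\delta$-almost surely, and therefore also $\mathbb P$-almost surely by equivalence. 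Intersecting over a countable sequence $\delta_n\downarrow 0$ and using that $\sigma_{\delta_n}=T$ eventually on $\{m>0\}$, we conclude $R_i>0$ on $[0,T]$ almost surely.

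Main obstacle. The attractive alternative of a \emph{single} simultaneous Girsanov that removes all non-Bessel drifts in one stroke (matching the paper's advertised "system of non-interacting Bessel processes") runs into trouble: the resulting Radon--Nikodym density, when localized at $\tau_\delta[R]$, has $\log$-quadratic variation of order $1/\delta^2$, and a Cauchy--Schwarz bound of the form
\[
 \mathbb P(\tau_\delta<T)\le\widetilde{\mathbb E}_\delta\bigl[Z_{T\wedge\tau_\delta}^{-2}\bigr]^{1/2}\,\widetilde{\mathbb P}_\delta(\tau_\delta<T)^{1/2}
\]
then pits $\exp(C/\delta^2)$ against the Bessel hitting probability $\widetilde{\mathbb P}_\delta(\tau_\delta<T)$, which vanishes only polynomially (or, in dimension exactly two, logarithmically) in $\delta$. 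The reverse-inductive scheme above sidesteps this by ensuring that at the $i$-th step the drift being removed is bounded by $(\beta/2-1)/\delta$ through an already-controlled coordinate $R_{i+1}$; Novikov's condition becomes elementary and no moment estimate on the density is required.
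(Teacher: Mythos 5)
Your proof is correct and follows essentially the same route as the paper: a localized Girsanov change of measure that removes the neighboring drift $-(\beta/2-1)/R_{i+1}$ up to the stopping time at which $R_{i+1}$ first dips below $\delta$, a Bessel comparison (dimension $\beta/2\ge 2$ since $\beta\ge 4$), and a limit $\delta\to 0$ justified by the already-established positivity of $R_{i+1}$. The paper organizes this as induction over the dimension $k$ (treating $(R_2,\dots,R_k)$ as a solution of the $(k-1)$-dimensional system and then handling $R_1$), whereas you phrase it as reverse induction over the coordinate index $i$, but the base case (constant-drift Girsanov) and inductive step are the same argument.
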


\begin{proof}
We argue by induction over $k$. For $k=1$ the SDE is one-dimensional and its integral form reads
\begin{equation}\label{eq_SDE_k1}
R_1(t)=R_1(0)+\int_0^t\frac{(\beta/2-1)\,\mathrm{d}s}{R_1(s)}-\sqrt{\frac{\beta}{2T_0}}\,t+B_1(t)-B_{2}(t).
\end{equation}
By the Girsanov theorem (see e.g. \cite[Theorem 5.1, Chapter 3]{KS}) there exists a probability measure $\widetilde P$ equivalent to the underlying probability measure such that
$$
-\sqrt{\frac{\beta}{2T_0}}\,t+B_1(t)-B_2(t),\;\;t\in[0,T]
$$
is a standard Brownian motion under $\widetilde P$. Consequently, under $\widetilde P$ the process $2^{-1/2}\,R_1$ solves the SDE for the Bessel process of dimension $\beta/2$. Since $\beta\ge4$, it follows from the results of \cite[Section XI.1]{RY} that the latter process does not reach zero with probability one. Thus, $R_1$ does not reach zero before time $T$ under $\widetilde P$ with probability one, and the same is true under the original probability measure thanks to the equivalence of the two measures.

\medskip

Now, we consider an arbitrary $k>1$. By the induction hypothesis the coordinates $R_2,\,R_3,\,\ldots,\,R_k$ do not reach zero by time $T$, and it remains to analyze $R_1$. To this end, pick a $\delta>0$ and let
\[
\rho_\delta:=\inf\big\{t\ge0:\;R_2(t)\le\delta\big\}.
\]
Similarly to the case $k=1$ we can move to an equivalent measure $\widetilde P$ (noting that Novikov's condition \cite[Corollary 5.13, Chapter 3]{KS} is satisfied) such that under $\widetilde P$ the process
\[
-\int_0^{t\wedge\rho_\delta}\,\frac{(\beta/2-1)\,\mathrm{d}s}{R_2(s)}+B_1(t\wedge\rho_\delta)-B_2(t\wedge\rho_\delta),
\;\;t\in[0,T]
\]
is a Brownian motion stopped at $\rho_\delta$. Consequently, up to time $\rho_\delta$ the process $2^{-1/2}\,R_1$ under $\widetilde P$ coincides pathwise with a Bessel process of dimension $\beta/2$, and therefore does not reach zero up to time $\rho_\delta$ under either of the two measures. It remains to pass to the limit $\delta\to0$ and to invoke the induction hypothesis to deduce $\lim_{\delta\to 0}\rho_\delta=\infty$ under the original probability measure.
\end{proof}

\smallskip

Next, we turn to the uniqueness part of Theorem \ref{Theorem_uniqueness}. We let $R$ be a weak solution of \eqref{eq_SDE_for_diffs} with a given initial condition and observe that up to time $\tau_\delta[R]$ all drifts in the SDEs of \eqref{eq_SDE_for_diffs} are bounded. Hence, we can apply a Girsanov change of measure (noting that Novikov's condition \cite[Corollary 5.13, Chapter 3]{KS} is satisfied due to the boundedness of the integrands in the stochastic exponential) with a density of the form
$$
\exp\bigg(\sum_{i=1}^{k+1} \bigg(\int_0^T b_i(t)\,\mathbf{1}_{\{t\le \tau_\delta[R]\}}\,\mathrm{d}B_i(t)
- \frac{1}{2}\int_0^T b_{i}(t)^2\,\mathbf{1}_{\{t\le\tau_\delta[R]\}}\,\mathrm{d}t\bigg)\bigg)
$$
such that under the new measure $\widetilde P$
\begin{equation}\label{eq_modified_SDE}
 R_i(t\wedge \tau_\delta[R])=R_i(0)+\int_{0}^{t\wedge\tau[R]} \frac{\beta/2-1}{R_i(s)}\,\mathrm{d}s + \widetilde
 B_i(t\wedge \tau_\delta[R])-\widetilde B_{i+1}(t\wedge\tau_\delta[R]),\;\;\; 1\le i\le k
\end{equation}
on $[0,T]$, with $\widetilde B_i$, $1\le i\le k+1$ being i.i.d.\ one-dimensional standard Brownian
motions under $\widetilde P$.

\medskip

The equations of \eqref{eq_modified_SDE} imply that under $\widetilde P$ each $R_i$ is the $\sqrt{2}$ multiple of the Bessel process of dimension $\beta/2$ driven by the Brownian motion $2^{-1/2}(\widetilde B_i-\widetilde B_{i+1})$. The pathwise uniqueness of the Bessel process (see e.g. \cite[Chapter XI, Section 1]{RY}) shows that the joint law of the processes $R_i(t\wedge\tau_\delta[R])$, $t\in[0,T]$ with $i=1,2,\ldots,k$ and the stopping time $\tau_\delta[R]$ is uniquely determined under $\widetilde P$. Making a Girsanov change of measure back to the original probability measure we conclude that the joint law of
the process $R(t\wedge\tau_\delta[R])$, $t\in[0,T]$ and the stopping time $\tau_\delta[R]$ is also uniquely determined under the original probability measure (a detailed version of this argument can be found e.g. in the proof of \cite[Proposition 5.3.10]{KS}). Since $\delta$ was arbitrary and all components of $R$ have continuous paths, it follows that the joint law of the process $R(t\wedge\tau_0[R])$, $t\in[0,T]$ and the stopping time $\tau_0[R]$ is uniquely determined under the original probability measure as well. It remains to note that $\tau_0[R]=\infty$ almost surely by Lemma \ref{lemma_no_zero}.

\medskip

For the existence part of Theorem \ref{Theorem_uniqueness} we argue similarly. Pick $(k+1)$ independent standard Brownian motions $\widetilde B_i$ $1\le i\le k+1$ and define the process $R$ as the strong solution of
\begin{equation}\label{eq_Bessel_SDE}
R_i(t)=R_i(0)+\int_0^t \frac{\beta/2-1}{R_i(s)}\,\mathrm{d}s + \widetilde B_i(t)-\widetilde B_{i+1}(t),\quad\;\;\; 1\le i\le k.
\end{equation}
The existence of such a solution follows from the corresponding existence theorem for Bessel processes (see e.g. \cite[Chapter XI, Section 1]{RY}). Now, for any $\delta>0$ we can make a Girsanov change of measure, so that under the new measure $\widehat P$ the process $R$ satisfies \eqref{eq_SDE_for_diffs} with suitable Brownian motions up to time $\tau_\delta[R]$, that is
\begin{equation}\label{eq_SDE_for_diffs_stopped}
\begin{split}
R_i(t\wedge \tau_\delta[R])=R_i(0)+\int_0^{t\wedge \tau_\delta[R]}
\bigg(\frac{\beta/2-1}{R_i(s)}-\frac{\beta/2-1}{R_{i+1}(s)}\bigg)\,\mathrm{d}s+B_i(t\wedge \tau_\delta[R])-B_{i+1}(t\wedge \tau_\delta[R]),\\
i=1,\,2,\,\ldots,\,k-1, \\
R_k(t\wedge \tau_\delta[R])=R_k(0)+\int_0^{t\wedge \tau_\delta[R]}
\bigg(\frac{\beta/2-1}{R_k(s)}-\sqrt{\frac{\beta}{2 T_0}}\bigg)\,\mathrm{d}s
+B_k(t\wedge\tau_\delta[R])-B_{k+1}(t\wedge \tau_\delta[R])
\end{split}
\end{equation}
where $B_1,\,B_2,\,\ldots,\,B_{k+1}$ are i.i.d.\ one-dimensional standard Brownian motions. Thanks
to the uniqueness part we know that the solutions of \eqref{eq_SDE_for_diffs_stopped} are
consistent in the sense of the Kolmogorov Extension Theorem (see e.g.  in \cite[Theorem
6.16]{Ka}). Passing to the limit $\delta\to 0$ we therefore obtain a solution of
\eqref{eq_SDE_for_diffs_stopped} with $\delta=0$. Moreover, $\tau_0[R]=\infty$ with probability
one by Lemma \ref{lemma_no_zero}, so that in fact we have constructed a weak solution of
\eqref{eq_SDE_for_diffs}.

\medskip

At this point, we also remark that the just established existence and uniqueness in law of the solution $R$ to \eqref{eq_SDE_for_diffs} implies that $R$ is a Markov process via standard arguments (see e.g \cite[Theorem 5.4.20]{KS} and its proof).

\medskip

Repeating the same proof line by line one also arrives at the following theorem.

\begin{theorem} \label{Theorem_uniqueness_2}
For any $k=1,\,2,\,\ldots$ and any initial condition $Z_1^{(k)}(0)>Z_2^{(k)}(0)>\cdots>Z_{k+1}^{(k)}(0)$ the system of SDEs \eqref{eq_SDE_for_coords} possesses a unique weak solution. The solution is a Markov process and satisfies $Z_1^{(k)}(t)>Z_2^{(k)}(t)>\cdots>Z_{k+1}^{(k)}(t)$ for all $t\ge0$ with probability one.
\end{theorem}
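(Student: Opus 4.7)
The strategy is to follow the proof of Theorem \ref{Theorem_uniqueness} essentially verbatim, with the system \eqref{eq_SDE_for_coords} now in the role played there by \eqref{eq_SDE_for_diffs}. Fix $T > 0$; all claims are established on $[0,T]$ and the theorem then follows by arbitrariness of $T$. For a process $Z = (Z_1^{(k)},\ldots,Z_{k+1}^{(k)})$ on $[0,T]$ we set $\tau_\delta[Z] := \inf\{t\in[0,T] : Z_i^{(k)}(t) - Z_{i+1}^{(k)}(t) \le \delta \text{ for some } i\}$, with the convention $\inf\emptyset = \infty$.

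The first step is a non-collision result analogous to Lemma \ref{lemma_no_zero}: for any weak solution $Z$, the stopping time $\tau_0[Z]$ is almost surely infinite. A direct It\^o computation shows that the differences $R_i := Z_i^{(k)} - Z_{i+1}^{(k)}$, $i=1,\ldots,k$, satisfy precisely the system \eqref{eq_SDE_for_diffs} with $\sqrt{\beta/(2T_0)}$ replaced by $1$ (i.e.\ with $T_0 = \beta/2$). Since the proof of Lemma \ref{lemma_no_zero} works unchanged for any positive value of that constant, we conclude $\tau_0[Z]=\infty$ almost surely, which also gives the strict ordering $Z_1^{(k)}(t) > \cdots > Z_{k+1}^{(k)}(t)$ asserted in the theorem.

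For weak uniqueness, let $Z$ be a weak solution. Up to the stopping time $\tau_\delta[Z]$, the drifts $\tfrac{\beta/2-1}{Z_i^{(k)}-Z_{i+1}^{(k)}}$ and the constant drift $1$ appearing in \eqref{eq_SDE_for_coords} are all bounded by $\max(1,|\beta/2-1|/\delta)$, so Novikov's condition is satisfied and we may perform a Girsanov change of measure that \emph{removes} all of them. Under the new measure $\widetilde P$, the coordinates $Z_1^{(k)},\ldots,Z_{k+1}^{(k)}$ evolve as $(k+1)$ independent standard Brownian motions stopped at $\tau_\delta[Z]$, so the joint law of $(Z(\cdot\wedge\tau_\delta[Z]),\tau_\delta[Z])$ under $\widetilde P$ is manifestly unique; transferring back via the Radon--Nikodym derivative (exactly as in the proof of \cite[Proposition 5.3.10]{KS}) gives the same uniqueness under the original measure. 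Letting $\delta\to 0$ and invoking the first step yields weak uniqueness on $[0,T]$.

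For existence, start with a probability space carrying i.i.d.\ standard Brownian motions $\widetilde B_1,\ldots,\widetilde B_{k+1}$ and define $Z_i^{(k)}(t) := Z_i^{(k)}(0) + \widetilde B_i(t)$. For each $\delta > 0$ the inverse Girsanov change of measure produces a probability measure $\widehat P$ under which $Z$ satisfies \eqref{eq_SDE_for_coords} up to time $\tau_\delta[Z]$ with suitable Brownian motions. The uniqueness just established makes these stopped solutions consistent as $\delta\to 0$, yielding a weak solution on $[0,T]$ via the Kolmogorov extension theorem, and the non-collision lemma then confirms it is a solution of the full SDE. The Markov property follows from weak uniqueness by the standard argument in \cite[Theorem 5.4.20]{KS}. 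The main technical issue is exactly the same as in Theorem \ref{Theorem_uniqueness}, namely the singular drift coefficients that preclude a direct appeal to classical theorems; if anything the situation here is slightly cleaner, since under the Girsanov transformation the coordinates of \eqref{eq_SDE_for_coords} decouple into \emph{independent} Brownian motions, whereas in \eqref{eq_SDE_for_diffs} the difference variables became coupled Bessel processes.
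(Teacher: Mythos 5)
Your proposal is correct and takes essentially the same approach as the paper, which proves Theorem \ref{Theorem_uniqueness_2} simply by repeating the Girsanov--localization proof of Theorem \ref{Theorem_uniqueness} line by line. Your minor variants --- removing \emph{all} drifts up to $\tau_\delta$ so that the coordinates become independent Brownian motions rather than Bessel processes, and obtaining the non-collision statement by observing that the differences $Z_i^{(k)}-Z_{i+1}^{(k)}$ solve \eqref{eq_SDE_for_diffs} with $T_0=\beta/2$ so that Lemma \ref{lemma_no_zero} applies verbatim --- are harmless and, if anything, streamline the argument.
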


%%%%%%%%%%%%%%%%%%%%%%%%%%%%%%%%%%%%%%%%%%%%
\subsection{Convergence to the limit}\label{Section_convergence}
%%%%%%%%%%%%%%%%%%%%%%%%%%%%%%%%%%%%%%%%%%%%

In this subsection we will show that every limit point of the sequence in
\eqref{eq_differences_vector} is a weak solution of the system of SDEs \eqref{eq_SDE_for_diffs}.
Let $R=(R_1,\,R_2,\,\ldots,\,R_k)$ be an arbitrary such limit point. As will become apparent from
the following argument we may assume without loss of generality that $R$ is the limit of the whole
sequence in \eqref{eq_differences_vector}. In addition, thanks to the Skorokhod Representation
Theorem in the form of \cite[Theorem 3.5.1]{Du} we may also assume that the processes in
\eqref{eq_differences_vector} are all defined on the same probability space
$(\Omega,\mathcal{F},\pp)$ and converge to $R$ in the almost sure sense with respect to the
topology on $\mathcal{C}^k$. In other words, with the notation
\begin{equation} \label{eq_diff_notation}
\R^N_i(t):=X^{N+1-i}_{N+1-i}(NT_0+t;N)-X^{N-i}_{N-i}(NT_0+t;N),\;\;t\ge0,\quad
i=1,\,2,\,\ldots,\,k
\end{equation}
we have
\begin{equation}
\label{eq_convergence_of_components}
\lim_{N\to\infty} \R^N_i = R_i,\quad i=1,\,2,\,\ldots,\,k
\end{equation}
uniformly on compact sets. As announced we will now prove the following.

\begin{proposition} \label{Proposition_limit_SDE}
The process $R$ is a weak solution of the system of SDEs \eqref{eq_SDE_for_diffs}.
\end{proposition}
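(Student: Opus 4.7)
The natural strategy is to verify, via the Stroock--Varadhan martingale problem, that any subsequential limit $R$ has the law of the unique weak solution to \eqref{eq_SDE_for_diffs} provided by Theorem \ref{Theorem_uniqueness}. Let
$$
\mathcal{L}f(x)=\sum_{i=1}^{k-1}\Big(\frac{\beta/2-1}{x_i}-\frac{\beta/2-1}{x_{i+1}}\Big)\partial_i f(x)+\Big(\frac{\beta/2-1}{x_k}-\sqrt{\tfrac{\beta}{2T_0}}\,\Big)\partial_k f(x)+\sum_{i=1}^k\partial_{ii}f(x)-\sum_{i=1}^{k-1}\partial_{i,i+1}f(x)
$$
denote the generator of \eqref{eq_SDE_for_diffs}; the off-diagonal second-order terms reflect the fact that consecutive differences $B_i-B_{i+1}$ and $B_{i+1}-B_{i+2}$ share the Brownian motion $B_{i+1}$ with opposite signs. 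It suffices to show that for every test function $f\in C^2_c((0,\infty)^k)$ the process
$$
M_f(t):=f(R(t))-f(R(0))-\int_0^t\mathcal{L}f(R(s))\,\mathrm{d}s
$$
is a martingale in the natural filtration of $R$. Combined with weak uniqueness (Theorem \ref{Theorem_uniqueness}) and the positivity statement in Lemma \ref{lemma_no_zero}, this will force the law of $R$ to coincide with that of the weak solution of \eqref{eq_SDE_for_diffs} with the initial distribution prescribed by Theorem \ref{Theorem_convergence_fixed_time}.

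The pre-limit analogue is direct: applying It\^o's formula to $f(\R^N(t))$ using \eqref{eq_prelimit_SDE}--\eqref{eq_prelimit_SDE_2}, and observing that the driving Brownian motions give covariations $\mathrm{d}[\R^N_i,\R^N_i]=2\,\mathrm{d}t$, $\mathrm{d}[\R^N_i,\R^N_{i+1}]=-\mathrm{d}t$ (and zero otherwise), yields that
$$
M^N_f(t):=f(\R^N(t))-f(\R^N(0))-\int_0^t\Big(\mathcal{L}f(\R^N(s))+\sum_{i=1}^{k-1}S_{i-1}(s;N)\,\partial_i f(\R^N(s))+\big(\hat S_k(s;N)+\sqrt{\tfrac{\beta}{2T_0}}\big)\,\partial_k f(\R^N(s))\Big)\mathrm{d}s
$$
is a bounded martingale, the boundedness stemming from the compact support of $f$.

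The passage to the limit requires three inputs. First, the almost sure uniform convergence \eqref{eq_convergence_of_components} from the Skorokhod coupling gives $f(\R^N(t))\to f(R(t))$ pointwise in $t$. Second, since $\mathcal{L}f$ is bounded and continuous on $(0,\infty)^k$ by compact support of $f$, dominated convergence yields $\int_0^t\mathcal{L}f(\R^N(s))\,\mathrm{d}s\to\int_0^t\mathcal{L}f(R(s))\,\mathrm{d}s$ almost surely. Third, a direct decomposition of $S_a$ and $\hat S_k$ into signed combinations of the sums treated in Lemmas \ref{lemma_sum_inverse}, \ref{lemma_one_inverse} and \ref{Lemma_sum_inverse_cross_level}, applied at the levels $N-a$, $N-a-1$, $N-a-2$ rather than $N$ (which only perturbs the effective time parameter by a factor tending to one via the Brownian scaling \eqref{eq_in_dist}), gives $\sup_{s\in[0,t]}\ev|S_{i-1}(s;N)|\to 0$ and $\sup_{s\in[0,t]}\ev|\hat S_k(s;N)+\sqrt{\beta/(2T_0)}|\to 0$; together with the boundedness of $\partial f$ and Fubini, the two residual integrals vanish in $L^1$. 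Consequently $M^N_f(t)\to M_f(t)$ in $L^1$, and passing to the limit in the identity $\ev[\Phi(\R^N|_{[0,s]})(M^N_f(t)-M^N_f(s))]=0$ for arbitrary bounded continuous $\Phi$ transfers the martingale property from $M^N_f$ to $M_f$.

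The main obstacle I anticipate is the third input: each of the three sums making up $S_a$ and each of the three sums making up $\hat S_k$ individually converges to $(\beta/2-1)\sqrt{2/(\beta T_0)}$ or $(\beta/2)\sqrt{2/(\beta T_0)}$, and only the signed combination produces the clean limits $0$ and $-\sqrt{\beta/(2T_0)}$; verifying this cancellation with the quantitative control provided by Lemmas \ref{lemma_sum_inverse}, \ref{lemma_one_inverse}, \ref{Lemma_sum_inverse_cross_level} (including the uniform-on-compact-sets aspect required for Fubini) is where the bulk of the work lies. A secondary subtlety is that compactly supported test functions identify only the law of $R(\cdot\wedge\tau_0[R])$; one then invokes weak uniqueness in Theorem \ref{Theorem_uniqueness} together with Lemma \ref{lemma_no_zero} applied to the unique weak solution to deduce $\tau_0[R]=\infty$ almost surely, thereby upgrading the conclusion to the full statement that $R$ is a weak solution of \eqref{eq_SDE_for_diffs}.
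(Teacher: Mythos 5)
Your proposal follows essentially the same route as the paper: establish, via the prelimit It\^o martingale and the fixed-time Lemmas \ref{lemma_sum_inverse}--\ref{Lemma_sum_inverse_cross_level} under the Skorokhod coupling, that $M_f$ is a martingale for smooth $f$ compactly supported in $(0,\infty)^k$, and then upgrade to the full statement via the stopping times $\tau_\delta$ and the non-hitting property from Lemma \ref{lemma_no_zero}. The only inessential differences are that the paper handles the drift remainders by pairing adjacent sums and invoking interlacing (so that each pair is controlled by a single gap term, reducing to Lemma \ref{lemma_one_inverse}), rather than passing each of the four sums in $S_a$ separately through Lemmas \ref{lemma_sum_inverse} and \ref{Lemma_sum_inverse_cross_level} as you do, and that the paper then explicitly reconstructs the driving Brownian motions via a martingale representation theorem (cf.\ \cite[Proposition 5.4.6]{KS}) on $[0,\tau_\delta]$ rather than citing martingale-problem uniqueness directly; both are standard and equivalent implementations.
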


\smallskip

Fix a $\delta>0$ and let $\F_\delta$ be the set of infinitely differentiable functions
$f:\,[0,\infty)^k\to\rr$ with support contained in the set
$$
S_\delta:=\big\{(x_1,\,x_2,\,\ldots,\,x_k)\in[0,\infty)^k:\;\delta<x_i<\delta^{-1} \text{ for all
}i=1,\,2,\,\ldots,\,k\big\}.
$$
For functions $f\in\F_\delta$ consider the process
\eq\label{eq_martingale}
\begin{split}
M^f(t):=f(R(t))-f(R(0))
-\int_0^t \,\sum_{i=1}^{k-1} \bigg(\frac{\beta/2-1}{R_i(s)}-\frac{\beta/2-1}{R_{i+1}(s)}\bigg)
\,\frac{\partial f}{\partial x_i}(R(s))\,\mathrm{d}s \quad\quad\quad\quad\quad \\
-\int_0^t \bigg(\frac{\beta/2-1}{R_k(s)}-\sqrt{\frac{\beta}{2T_0}}\bigg)\,\frac{\partial f}{\partial x_k}(R(s))
+\sum_{i,j=1}^k a_{i,j}\,\frac{\partial^2 f}{\partial x_i\partial x_j}(R(s))\,\mathrm{d}s,\;\;t\ge0
\end{split}
\en
where $a_{i,j}=\mathbf{1}_{\{i=j\}}-\frac{1}{2}\,\mathbf{1}_{\{|i-j|=1\}}$, $i,j\in\{1,2,\ldots,k\}$.

\begin{lemma} \label{Lemma_limit_martingale}
For any $\delta>0$ and $f\in\F_\delta$ the process $M^f$ is a continuous martingale in its natural filtration.
\end{lemma}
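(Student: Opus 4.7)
The plan is to realize $M^f$ as the in-probability limit of pre-limit martingales constructed from the processes $\R^N=(\R^N_1,\ldots,\R^N_k)$ of \eqref{eq_diff_notation} via It\^o's formula. Reading off \eqref{eq_prelimit_SDE}--\eqref{eq_prelimit_SDE_2}, the pair $(\R^N_i,\R^N_{i+1})$ shares exactly one driving Brownian motion with opposite signs, while $(\R^N_i,\R^N_j)$ for $|i-j|\ge 2$ is driven by disjoint Brownian motions; hence $\mathrm{d}\langle \R^N_i,\R^N_j\rangle/\mathrm{d}t=2a_{ij}$ with the same $a_{ij}$ as in the statement. It\^o's formula therefore produces a continuous local martingale
\begin{equation*}
M^{N,f}(t)=f(\R^N(t))-f(\R^N(0))-\int_0^t \mathcal L^N f(\R^N(s),s)\,\mathrm{d}s
\end{equation*}
in the filtration of $X(\cdot\,;N)$, where $\mathcal L^N f(\R^N(s),s)$ equals
\begin{equation*}
\sum_{i=1}^{k-1}\Big(\tfrac{\beta/2-1}{\R^N_i(s)}-\tfrac{\beta/2-1}{\R^N_{i+1}(s)}+S_{i-1}(s;N)\Big)\partial_i f(\R^N(s))+\Big(\tfrac{\beta/2-1}{\R^N_k(s)}+\hat S_k(s;N)\Big)\partial_k f(\R^N(s))+\sum_{i,j=1}^k a_{ij}\partial^2_{ij}f(\R^N(s)).
\end{equation*}
Because $f\in\F_\delta$ has compact support in $S_\delta$, all its partials are bounded, so the stochastic integrand defining $M^{N,f}$ is bounded and $M^{N,f}$ is in fact an $L^2$-bounded true martingale, uniformly in $N$ on any finite time interval.

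Next, I split $\mathcal L^N f$ into a ``local'' piece (the singular $1/\R^N_i$ terms together with $\sum a_{ij}\partial^2_{ij}f$) and a ``global'' remainder containing $S_a\partial_{a+1}f$ and $\hat S_k\partial_k f$. On the support of any $\partial_i f$ one has $\R^N_i(s)>\delta$, so every factor of the local part is a bounded continuous function of $\R^N(s)$; combining the almost sure uniform convergence $\R^N\to R$ on compacts (from the Skorokhod representation set up above) with bounded convergence yields almost sure convergence of the local-drift integral to the corresponding integral against $R$. For the global part, Lemmas \ref{lemma_sum_inverse}--\ref{Lemma_sum_inverse_cross_level}, applied termwise to each sum inside $S_a$ and $\hat S_k$ and combined by the triangle inequality, give $\ev[|S_a(s;N)|]\to 0$ and $\ev[|\hat S_k(s;N)+\sqrt{\beta/(2T_0)}|]\to 0$ uniformly for $s$ in compact intervals. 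Fubini and the uniform boundedness of $\partial_i f$ then upgrade these to $L^1$-convergences $\int_0^t S_a(s;N)\partial_{a+1}f(\R^N(s))\,\mathrm{d}s\to 0$ and $\int_0^t \hat S_k(s;N)\partial_k f(\R^N(s))\,\mathrm{d}s\to -\sqrt{\beta/(2T_0)}\int_0^t\partial_k f(R(s))\,\mathrm{d}s$. Combining, $M^{N,f}(t)\to M^f(t)$ in probability for each fixed $t\ge 0$.

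The last step is to transfer the martingale property. By the tower property, $M^{N,f}$ is also a martingale with respect to the natural filtration of $\R^N$, so for every $0\le s\le t$ and every bounded continuous functional $\Phi$ on $C([0,s];[0,\infty)^k)$,
\begin{equation*}
\ev\bigl[\Phi(\R^N|_{[0,s]})\bigl(M^{N,f}(t)-M^{N,f}(s)\bigr)\bigr]=0.
\end{equation*}
The almost sure convergence $\R^N\to R$ in $\mathcal C^k$, continuity of $\Phi$, and convergence in probability $M^{N,f}(u)\to M^f(u)$ at $u=s,t$ give convergence in probability of the integrand to $\Phi(R|_{[0,s]})(M^f(t)-M^f(s))$; the $L^2$-boundedness of $\{M^{N,f}(u)\}_N$ at $u=s,t$ together with boundedness of $\Phi$ yields uniform integrability, whence $\ev[\Phi(R|_{[0,s]})(M^f(t)-M^f(s))]=0$. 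Since $\Phi$ was arbitrary, this establishes that $M^f$ is a martingale in the natural filtration of $R$; its path continuity is immediate from continuity of $R$ and of the time integrals. The main obstacle is the diverging pre-limit drifts $S_a,\hat S_k$, each of a priori size $\mathcal O(N)$; overcoming them requires precisely the delicate near-cancellations proved in Section \ref{Section_fixed}, integrated in time, while the compact-support assumption $f\in\F_\delta$ conveniently isolates the argument from the singularities of $1/\R^N_i$ at the boundary.
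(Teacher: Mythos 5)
Your proposal is correct and is essentially the paper's own argument: construct the prelimit martingales via It\^o's formula from \eqref{eq_prelimit_SDE}--\eqref{eq_prelimit_SDE_2}, use Lemmas \ref{lemma_sum_inverse}--\ref{Lemma_sum_inverse_cross_level} (together with the restriction property and the scaling relation \eqref{eq_in_dist}, so that they apply at levels $N-a$) to obtain convergence of the drift terms in probability and in $L^1$, and then transfer the martingale property by testing increments against bounded continuous functionals of the past. The only nitpick is that the prelimit martingale property holds in the filtration generated by levels $N-k,\ldots,N$ (where the representation involving the extra Brownian motion $B$ from Proposition \ref{Proposition_GS_restriction} is valid), not in the full filtration of $X(\cdot\,;N)$; since you immediately pass to the natural filtration of $\R^N$, this does not affect your argument.
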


\begin{proof}
For each $N=1,\,2,\,\ldots$ define the process
\begin{equation} \label{eq_martingale_approximation}
\begin{split}
M^f_{N}(t):= f(\R^N(t))-f(\R^N(0)) \\-\int_0^t \sum_{i=1}^{k-1} \frac{\partial f}{\partial
x_i}(\R^N(s))
\Biggl(\sum_{j=1}^{N-i} \frac{\beta/2-1}{X^{N-i+1}_{N-i+1}(NT_0+s;N)-X^{N-i}_j(NT_0+s;N)} \\
-\sum_{j=1}^{N-i} \frac{\beta/2-1}{X^{N-i+1}_{N-i+1}(NT_0+s;N)-X^{N-i+1}_j(NT_0+s;N)}\\
-\sum_{j=1}^{N-i-1} \frac{\beta/2-1}{X^{N-i}_{N-i}(NT_0+s;N)-X^{N-i-1}_j(NT_0+s;N)} \\
+\sum_{j=1}^{N-i-1} \frac{\beta/2-1}{X^{N-i}_{N-i}(NT_0+s;N)-X^{N-i}_j(NT_0+s;N)}\Biggr)\,\mathrm{d}s \\
-\int_0^{t} \frac{\partial f}{\partial x_n}(\R^N(s)) \Biggl(-\sum_{j=1}^{N-k-1}
\frac{\beta/2}{X^{N-k}_{N-k}(NT_0+s;N)-X^{N-k}_j(NT_0+s;N)}\\
+\sum_{j=1}^{N-k} \frac{\beta/2-1}{X^{N-k+1}_{N-k+1}(NT_0+s;N)-X^{N-k}_j(NT_0+s;N)}
\\-\sum_{j=1}^{N-k} \frac{\beta/2-1}{X^{N-k+1}_{N-k+1}(NT_0+s;N)-X^{N-k+1}_j(NT_0+s;N)} \Biggr)\,\mathrm{d}s \\
- \int_0^t \sum_{i,j=1}^k a_{i,j}\,\frac{\partial^2 f}{\partial x_i\partial
x_j}(\R^N(s))\,\mathrm{d}s,\;\;t\ge0.
\end{split}
\end{equation}
In view of the SDEs \eqref{eq_prelimit_SDE}, \eqref{eq_prelimit_SDE_2} and Ito's formula the process $M^f_N$ is a martingale in the filtration generated by the processes $\big\{X^j_i(NT_0+t),\;t\ge0:\;i=1,\,2,\,\ldots,\,j,\;\;j=N-k,\dots,N\big\}$. Therefore, it is also a martingale in the natural filtration of the process $\R^N$.

\medskip

Our next aim is to prove that for each $T>0$ the following convergences hold:
\begin{eqnarray}
&& \lim_{N\to\infty} \sup_{0\le t\le T} |M^f_N(t)-M^f(t)|=0 \;\;\text{ in probability,\; and} \label{eq_conv_in_prob} \\
&& \forall\,t\in[0,T]:\quad \lim_{N\to\infty} \ev\big[|M^f_N(t)-M^f(t)|\big]=0. \label{eq_conv_in_L1}
\end{eqnarray}

%Misha: this lemma is not needed as the L1 convergence only needed for fixed t
%For that we will need an auxiliary lemma.
%
%\begin{lemma} \label{Lemma_vanishing_from_expectations}
% Let $\xi^n(t)$, $0\le t\le T$, $n=1,2,\dots$ be a sequence of random processes such that
% $$
%\lim_{n\to\infty}  \sup_{0\le t\le T} \ev |\xi^n(t)|=0.
% $$
% Then
% $$
%  \lim_{n\to\infty}\ev \sup_{0\le t\le T}  \left|\int_{0}^t \xi^n(s) ds\right| =0.
% $$
%\end{lemma}
%\begin{proof} We have
%$$
%\ev\sup_{0\le t\le T}  \left| \int_{0}^t \xi^n(s)ds \right|\le \ev \int_0^T|\xi^n(s)|ds =
%\int_0^T\ev|\xi^n(s)|ds\to 0. \qedhere
%$$
%\end{proof}

To this end, we will now analyze the $N\to\infty$ behavior of the terms in \eqref{eq_martingale_approximation} and show that they converge in the sense of \eqref{eq_conv_in_prob}, \eqref{eq_conv_in_L1} to the corresponding terms in \eqref{eq_martingale}:

\begin{itemize}
\item The convergence $f(\R^N(t))-f(\R^N(0))\to f(R(t))-f(R(0))$, $N\to\infty$ uniformly in $t\in[0,T]$ in probability holds due to \eqref{eq_convergence_of_components} and the continuity of $f$. Since $f$ is bounded, $f(\R^N(t))-f(\R^N(0))$ also converges to $f(R(t))-f(R(0))$ in $L^1$ for every fixed $t\in[0,T]$ by the Dominated Convergence Theorem. \\
\item Similarly, \eqref{eq_convergence_of_components}, the boundedness and continuity of the derivatives of $f$ and $\mathrm{supp}\,f\subset S_\delta$ imply
\begin{eqnarray*}
&& \int_0^t \; \sum_{i=1}^k \; \frac{\partial f}{\partial x_i}(\R^N(s))\,\frac{\beta/2-1}{\R^N_i(s)}\,\mathrm{d}s
\longrightarrow  \int_0^t \; \sum_{i=1}^k \; \frac{\partial f}{\partial x_i}(R(s))\,\frac{\beta/2-1}{R_i(s)}\,\mathrm{d}s,\\
&& \int_0^t \; \sum_{i=1}^{k-1} \; \frac{\partial f}{\partial x_i}(\R^N(s))\,\frac{\beta/2-1}{\R^N_{i+1}(s)}\,\mathrm{d}s
\longrightarrow \int_0^t \; \sum_{i=1}^{k-1} \; \frac{\partial f}{\partial x_i}(R(s))\,\frac{\beta/2-1}{R_{i+1}(s)}\,\mathrm{d}s, \\
&& \int_0^t \; \sum_{i,j=1}^k \; a_{i,j}\,\frac{\partial^2 f}{\partial x_i\partial x_j}(\R^N(s))\,\mathrm{d}s
\longrightarrow \int_0^t \; \sum_{i,j=1}^k \; a_{i,j}\,\frac{\partial^2 f}{\partial x_i\partial x_j}(R(s))\,\mathrm{d}s
\end{eqnarray*}
in the limit $N\to\infty$, both uniformly in $t\in[0,T]$ in probability and in $L^1$ for every fixed $t\in[0,T]$. \\
\item For every $i=1,\,2,\,\ldots,\,k$ the interlacing of the particles on levels $(N-i)$ and $(N-i+1)$ shows the inequality
\begin{multline*}
\quad \quad \quad \Biggl|\sum_{j=1}^{N-i-1} \frac{\beta/2-1}{X^{N-i+1}_{N-i+1}(NT_0+s;N)-X^{N-i}_j(NT_0+s;N)}\\
-\sum_{j=1}^{N-i} \frac{\beta/2-1}{X^{N-i+1}_{N-i+1}(NT_0+s;N)-X^{N-i+1}_j(NT_0+s;N)}\Biggr| \\
 \le \frac{\beta/2-1}{X^{N-i+1}_{N-i+1}(NT_0+s;N)-X^{N-i+1}_{N-i}(NT_0+s;N)}\,.
\end{multline*}

Moreover, Lemma \ref{Lemma_inverse_expectation} and the observation \eqref{eq_in_dist} reveal that the expectation of the latter upper bound tends to $0$ in the limit $N\to\infty$, uniformly in $s\in[0,T]$. Therefore, the boundedness of the derivatives of $f$ implies that the corresponding integrals in \eqref{eq_martingale_approximation} converge to $0$ uniformly in $t\in[0,T]$ in probability and in $L^1$ for any fixed $t\in[0,T]$. \\
\item Lastly, Lemma \ref{lemma_sum_inverse} yields the convergence
$$
\sum_{j=1}^{N-k-1} \frac{\beta/2}{X^{N-k}_{N-k}(NT_0+s;N)-X^{N-k}_j(NT_0+s;N)}\to
\sqrt{\frac{\beta}{2T_0}}
$$
in $L^1$ uniformly in $s\in[0,T]$. Combining this with the boundness and continuity of the
derivatives of $f$ and \eqref{eq_convergence_of_components} we conclude that the sixth line of
\eqref{eq_martingale_approximation} converges to
$$
 -\int_0^t -\sqrt{\frac{\beta}{2T_0}}\,\,\frac{\partial f}{\partial x_n}(R(s))\,\mathrm{d}s
$$
in the limit $N\to\infty$ uniformly in $t\in[0,T]$ in probability and in $L^1$ for any fixed $t\in[0,T]$.
\end{itemize}

\smallskip

At this point, we can combine all the above convergences to obtain \eqref{eq_conv_in_prob} and \eqref{eq_conv_in_L1}. It remains to observe that for any bounded continuous functional $G$ and any $0\le s<t\le T$:
\begin{eqnarray*}
&&\Big|\ev\big[\big(M^f(t)-M^f(s)\big)\,G\big(R(u):\,0\le u\le s\big)\big]\Big| \\
&=&\Big|\ev\big[\big(M^f(t)-M^f(s)\big)\,G\big(R(u):\,0\le u\le s\big)\big]
-\ev\big[\big(M^f_N(t)-M^f_N(s)\big)\,G\big(\R^N(u):\,0\le u\le s\big)\big]\Big| \\
&\le& \Big|\ev\big[\big(M^f(t)-M^f(s)\big)\,G\big(R(u):\,0\le u\le s\big)\big]
-\ev\big[\big(M^f(t)-M^f(s)\big)\,G\big(\R^N(u):\,0\le u\le s\big)\big]\Big| \\
&&+\Big|\ev\big[\big(M^f(t)-M^f(s)\big)G\big(\R^N(u):0\le u\le s\big)\big]
-\ev\big[\big(M^f_N(t)-M^f_N(s)\big)G\big(\R^N(u):0\le u\le s\big)\big]\Big| \\
&&\qquad\qquad\qquad\qquad\qquad\qquad\qquad\qquad\qquad\qquad\qquad\qquad\qquad\qquad\qquad\qquad\qquad\qquad\qquad\longrightarrow0
\end{eqnarray*}
along a suitable subsequence thanks to \eqref{eq_conv_in_prob}, the Dominated Convergence Theorem and \eqref{eq_conv_in_L1}. It follows that $M^f$ is a continuous martingale in the filtration generated by the process $R$ and, hence, also in its own natural filtration.
\end{proof}

Next, for $\delta>0$ we recall \eqref{eq_stop_time_def} and define
$\tau_\delta:=\tau_\delta[\min(R,1/R)]$ where the functions $\min$ and $r\mapsto 1/r$ are applied
componentwise. Now, noting that for every function $f$ of one of the two forms $x\mapsto x_i$ or
$x\mapsto x_i\,x_j$ there is a function $f_\delta\in\F_{\delta/2}$ which coincides with $f$ on
$S_\delta$, using Lemma \ref{Lemma_limit_martingale} for $f_\delta$ and applying the Optional
Stopping Theorem (see e.g. \cite[Section 1.3.C]{KS}) we end up with the following.

\begin{proposition} \label{Proposition_martingales_stopped}
For any $\delta>0$ and any function $f$ of one of the two forms $x\mapsto x_i$ or $x\mapsto x_i\,x_j$ the process $M^f(t\wedge \tau_\delta)$, $t\in[0,T]$, defined via \eqref{eq_martingale}, is a continuous martingale.
\end{proposition}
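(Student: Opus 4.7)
The plan is to reduce the statement directly to Lemma \ref{Lemma_limit_martingale} together with the Optional Stopping Theorem, as indicated in the paragraph preceding the proposition. The functions $x\mapsto x_i$ and $x\mapsto x_i x_j$ are not in $\mathcal{F}_{\delta/2}$ because they are not compactly supported, but on the bounded set $S_\delta$ they can be replaced by a smooth compactly supported modification.

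First, I would choose a smooth cutoff $\chi:[0,\infty)^k\to[0,1]$ with $\chi\equiv 1$ on a slightly larger set containing $\overline{S_\delta}$ (say on $S_{3\delta/4}$) and $\mathrm{supp}\,\chi\subset S_{\delta/2}$, and define $f_\delta := f\cdot\chi$. Then $f_\delta\in\mathcal{F}_{\delta/2}$; moreover, on $\overline{S_\delta}$ one has $\chi\equiv 1$ with all derivatives of $\chi$ vanishing, so $f_\delta$ and all its first and second partial derivatives agree with those of $f$ on $\overline{S_\delta}$.

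Second, I would observe that the stopping time $\tau_\delta=\tau_\delta[\min(R,1/R)]$ is, by \eqref{eq_stop_time_def}, the first time at which some $R_i$ leaves the interval $(\delta,\delta^{-1})$. Hence for every $s\in[0,t\wedge\tau_\delta]$ the random vector $R(s)$ lies in $\overline{S_\delta}$, so by the previous step the integrands in the definition \eqref{eq_martingale} of $M^f$ coincide pointwise with those appearing in the definition of $M^{f_\delta}$, and similarly for the $f$-terms outside the integrals. Thus $M^f(t\wedge\tau_\delta)=M^{f_\delta}(t\wedge\tau_\delta)$ identically for every $t\in[0,T]$.

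Third, Lemma \ref{Lemma_limit_martingale} applied to $f_\delta\in\mathcal{F}_{\delta/2}$ shows that $M^{f_\delta}$ is a continuous martingale in its natural (and hence also in the larger) filtration. Since $\tau_\delta\wedge T\le T$ is a bounded stopping time, the Optional Stopping Theorem (\cite[Section 1.3.C]{KS}) implies that $M^{f_\delta}(\cdot\wedge\tau_\delta)$ is a continuous martingale on $[0,T]$, and by the pathwise identification above the same is true of $M^f(\cdot\wedge\tau_\delta)$, as claimed. The only mild subtlety is ensuring that the derivatives of $f_\delta$ (not just $f_\delta$ itself) coincide with those of $f$ throughout $\overline{S_\delta}$; this is why $\chi$ must be chosen to be identically $1$ on a strict open neighborhood of $\overline{S_\delta}$ rather than merely on $\overline{S_\delta}$ itself, but this is a routine construction.
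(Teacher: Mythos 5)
Your argument is correct and is essentially the paper's own proof: replace $f$ by a compactly supported modification $f_\delta\in\F_{\delta/2}$ that agrees with $f$ (together with its first and second derivatives) on $\overline{S_\delta}$, observe that $M^f(\cdot\wedge\tau_\delta)=M^{f_\delta}(\cdot\wedge\tau_\delta)$ because $R$ stays in $\overline{S_\delta}$ up to time $\tau_\delta$, and then combine Lemma \ref{Lemma_limit_martingale} with the Optional Stopping Theorem. One small correction: a martingale in its natural filtration need not remain a martingale in a larger filtration, so rather than the parenthetical ``and hence also in the larger filtration'' you should invoke the fact, established at the end of the proof of Lemma \ref{Lemma_limit_martingale}, that $M^{f_\delta}$ is a martingale in the filtration generated by $R$, which is the filtration for which $\tau_\delta$ is a stopping time.
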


%Misha: I think a one sentence explanation above is enough here
%\begin{proof}
%  Fix $f$ and let $f_\delta$ be an arbitrary function from $\F_{\delta/2}$ such that
%  $f(x_1,\dots,x_k)=f_\delta(x_1,\dots,x_k)$ for all $x_1,\dots,x_k$ satisfying $\delta\le x_i \le
%  \delta^{-1}$, $1\le i \le k$. The definitions imply that $M^f(t\wedge
%  \tau_\delta)=M^{f_\delta}(t\wedge\tau_\delta)$. Lemma \ref{Lemma_limit_martingale} yields that
%  $M^{f_\delta}(t)$ is a continuous martingale, thus, by the Optional Stopping Theorem  so is  $M^{f_\delta}(t\wedge\tau_\delta)$.
%\end{proof}

We now claim that, after extending the underlying probability space if necessary, we can find Brownian motions $W^\delta_1,\,W^\delta_2,\,\ldots,\,W^\delta_k$ such that for all $t\ge 0$
\begin{equation} \label{eq_integral_for_diffs}
\begin{split}
& R_i(t\wedge \tau_\delta)=R_i(0)+ \int_{0}^{t\wedge \tau_\delta}
\bigg(\frac{\beta/2-1}{R_i(s)}-\frac{\beta/2-1}{R_{i+1}(s)}\bigg)\,\mathrm{d}s+
W^\delta_i(t\wedge \tau_\delta),\quad i=1,2,\ldots,k-1,\\
& R_k(t\wedge\tau_\delta)=R_k(0)+\int_0^{t\wedge \tau_\delta}
\bigg(\frac{\beta/2-1}{R_k(s)}-\sqrt{\frac{\beta}{2 T_0}}\bigg)\,\mathrm{d}s+W^\delta_k(t\wedge
\tau_\delta),
\end{split}
\end{equation}
and such that the covariance structure of $W^\delta_1,\,W^\delta_2,\,\ldots,\,W^\delta_k$ is given by
$$
\ev\big[W^\delta_i(t_1)\,W^\delta_j(t_2)\big]=a_{ij}\,t_1\wedge t_2,\quad t_1,\,t_2 \ge 0.
$$
Indeed, one can show the existence of such Brownian motions by repeating the arguments in the proof of \cite[Proposition 5.4.6]{KS}, replacing all occurences of $t$ there by $t\wedge \tau_\delta$ there and using Proposition \ref{Proposition_martingales_stopped}.

\medskip

%Formally, we do: we follow the argument of \cite[Chapter 5, Proposition 4.6]{KS} to show that the coordinates are local
% martingales with desired cross-covariation. At this stage replacing $t$ by $t\wedge\tau$ is irrelevant.
% [The actual trick here is in using stopping times instead of functions vanishing near the boundary -
%  in this formulation the analogue of Lemma \ref{Lemma_covariance_single} becomes straightforward ]
% After we obtained the formula for cross-covariation, we use martingale representation theorem \cite[Theorem 3.4.2]{KS}
% for the process stopped at $\tau$ and we are done.

Finally, note that the joint distribution of the Brownian motions
$W^\delta_1,\,W^\delta_2,\,\ldots,\,W^\delta_k$ is the same as the joint distribution of the
differences $B_1-B_2,\,B_2-B_3,\,\ldots,\,B_k-B_{k+1}$ of i.i.d.\ one-dimensional standard
Brownian motions $B_1,\,B_2,\,\ldots,\,B_{k+1}$. This and the stochastic integral equation
\eqref{eq_integral_for_diffs} allow us to identify the law of $R(t\wedge\tau_\delta)$, $t\in[0,T]$
with that of the weak solution of \eqref{eq_SDE_for_diffs} stopped at $\tau_\delta$. Since
$\delta$ was arbitrary and $\lim_{\delta\to0}\,\tau_\delta=\infty$ with probability one,
Proposition \ref{Proposition_limit_SDE} now readily follows.

%%%%%%%%%%%%%%%%%%%%%%%%%%%%%%%%%%%%%%%%%%%
\section{Appendix: Properties of the limiting diffusion} \label{Section_Limit_properties}
%%%%%%%%%%%%%%%%%%%%%%%%%%%%%%%%%%%%%%%%%%%

The limiting object, that is the solution of \eqref{eq_SDE_for_diffs} started from i.i.d.\ Gamma distributions of Theorem \ref{Theorem_convergence_fixed_time}, has several curious properties which we present in this section. The dimension $k$ will vary, so we restore it in the notation and write $R^{(k)}$ for the $(0,\infty)^k$-valued weak solution of
\begin{equation} \label{eq_SDE_for_diffs_k}
\begin{split}
& \mathrm{d}R_i^{(k)}(t)=\frac{(\beta/2-1)\,\mathrm{d}t}{R_i^{(k)}(t)}\,
\mathrm{d}t-\frac{(\beta/2-1)\,\mathrm{d}t}{R_{i+1}^{(k)}(t)}+\mathrm{d}B_i(t)-\mathrm{d}B_{i+1}(t),\quad i=1,2,\ldots,k-1,\\
& \mathrm{d}R^{(k)}_k(t)=\frac{(\beta/2-1)\,\mathrm{d}t}{R^{(k)}_k(t)}
-\sqrt{\frac{\beta}{2T_0}}\,\mathrm{d}t+\mathrm{d}B_k(t)-\mathrm{d}B_{k+1}(t)
\end{split}
\end{equation}
where $B_1,\,B_2,\,\dots,\,B_{k+1}$ are i.i.d.\ one-dimensional standard Brownian motions. Note
that we do not impose a specific initial condition here.

\begin{proposition} \label{proposition_invariant}
For every fixed $k$ the product measure on $(0,\infty)^k$ with one-dimensional marginals of probability density
$$
\frac{1}{\Gamma(\beta/2)}\Big(\frac{\beta}{2T_0}\Big)^{\beta/4}\,x^{\beta/2-1}\,e^{-\sqrt{\frac{\beta}{2T_0}}x}
$$
is invariant for the solution of SDE \eqref{eq_SDE_for_diffs_k}. In other words, the solution of \eqref{eq_SDE_for_diffs_k} started from this measure is a stationary process.
\end{proposition}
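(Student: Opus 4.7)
The plan is to verify the invariance directly by showing that the formal Fokker--Planck adjoint $L^*$ of the generator of \eqref{eq_SDE_for_diffs_k} annihilates the product Gamma density, and to convert this pointwise PDE identity into distributional invariance of the process using Ito's formula together with the well-posedness of the martingale problem for $L$ established in Theorem \ref{Theorem_uniqueness}. As a shortcut, one could alternatively deduce the claim from Theorem \ref{Theorem_multitime_convergence} combined with the stationarity part of Theorem \ref{Theorem_multitime_convergence_intro}, but the computation below is in the spirit announced for Section \ref{Section_Limit_properties}.

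The key algebraic observation that makes the computation transparent is the following. Let $p(x)$ denote the one-dimensional Gamma density and set $v(x):=p'(x)/p(x)=(\beta/2-1)/x-\sqrt{\beta/(2T_0)}$. Then the drift coefficients of \eqref{eq_SDE_for_diffs_k} can be rewritten as
\[
b_i(x)=v(x_i)-v(x_{i+1})\quad(1\le i\le k-1),\qquad b_k(x)=v(x_k),
\]
because the constants $-\sqrt{\beta/(2T_0)}$ cancel telescopically for the first $k-1$ coordinates and combine with the extra drift term of the last coordinate to reproduce $v(x_k)$. It is this matching of the drift with the log-derivative of the marginal density that forces $\pi(x):=\prod_{i=1}^k p(x_i)$ to be invariant, even though $L$ is not self-adjoint in $L^2(\pi)$.

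Writing $v_i:=v(x_i)$ so that $\partial_i\pi=v_i\pi$, I would compute $L^*\pi/\pi$ term by term. The first-order contribution $-\sum_i(\partial_i b_i+b_iv_i)/\pi$ equals $-\sum_i v'(x_i)-\sum_i v_i^2+\sum_{i<k}v_iv_{i+1}$, while the second-order contribution $\sum_{ij}a_{ij}\partial_i\partial_j\pi/\pi$ equals $\sum_i(v_i^2+v'(x_i))-\sum_{i<k}v_iv_{i+1}$, the last sum arising from the off-diagonal entries $a_{i,i\pm 1}=-1/2$. The two expressions cancel identically and yield $L^*\pi\equiv 0$ on $(0,\infty)^k$.

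To convert this into invariance of the process I would test against the functions $f\in\F_\delta$ introduced before Lemma \ref{Lemma_limit_martingale}. Applying Ito's formula to $f(R^{(k)}(t\wedge\tau_\delta))$, taking expectations under $\mathbb{P}_\pi$, and integrating by parts (which produces no boundary contributions since $\operatorname{supp} f\subset S_\delta$) gives $\mathbb{E}_\pi[f(R^{(k)}(t\wedge\tau_\delta))]=\int f\,\pi\,\mathrm{d}x$ via $\int Lf\cdot\pi\,\mathrm{d}x=\int f\cdot L^*\pi\,\mathrm{d}x=0$. Sending $\delta\downarrow 0$ and invoking Lemma \ref{lemma_no_zero} (which ensures $\tau_0=\infty$ almost surely, so $\tau_\delta\uparrow\infty$) together with dominated convergence extends this identity to all bounded continuous $f$ on $(0,\infty)^k$, which is enough to conclude invariance of $\pi$. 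The main technical point is controlling the exchange of limits near the boundary $\{x_i=0\}$ and at infinity; the former is handled by Lemma \ref{lemma_no_zero}, the latter by the exponential tails of $\pi$, so I expect this step to be routine rather than genuinely hard.
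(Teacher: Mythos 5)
Your algebraic half is correct and is essentially the computation the paper performs: writing the drift as $b_i=v(x_i)-v(x_{i+1})$ for $i<k$ and $b_k=v(x_k)$ with $v=p'/p$, your cancellation is exactly the identity $\big(\mathcal{A}^{(k)}\big)^*\prod_i x_i^{\beta/2-1}e^{-x_i}=0$ of \eqref{eq_invariance}, which the paper verifies by induction on $k$ with the same telescoping structure; your log-derivative formulation is an equivalent (and arguably cleaner) packaging of \eqref{adjoint_op}.

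The genuine gap is in the step that converts $L^*\pi=0$ into stationarity. Applying Ito's formula to $f(R^{(k)}(t\wedge\tau_\delta))$ and taking $\ev_\pi$ gives
$\ev_\pi\big[f(R^{(k)}(t\wedge\tau_\delta))\big]-\int f\,\pi\,\mathrm{d}x=\ev_\pi\big[\int_0^{t\wedge\tau_\delta}(Lf)(R^{(k)}(s))\,\mathrm{d}s\big]$,
and the right-hand side integrates $Lf$ against the \emph{unknown} law of $R^{(k)}(s)$ (restricted to $\{s<\tau_\delta\}$), not against $\pi$; replacing that law by $\pi$ in order to invoke $\int Lf\,\pi\,\mathrm{d}x=0$ presupposes exactly the stationarity you are trying to prove, so the argument as written is circular. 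The known repairs are not routine: either (i) a semigroup argument in which one differentiates $\int P_tf\,\mathrm{d}\pi$ and needs $P_tf$ to be approximable in graph norm by the compactly supported test functions, i.e.\ one needs these functions to form a core of a Feller generator --- this is precisely Conjecture \ref{Feller_conj}, which the paper explicitly cannot establish (its generator-based derivation of Proposition \ref{proposition_invariant} in Section \ref{Section_Limit_properties} is presented only as conditional on that conjecture); or (ii) an Echeverr\'ia-type result \cite[Section 4.9]{EK}, which in addition requires well-posedness of the martingale problem for $\mathcal{A}^{(k)}$ with this restricted class of test functions, including control of possible boundary behaviour --- something Theorem \ref{Theorem_uniqueness} (an SDE statement for interior initial points) does not by itself provide. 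The paper's actual, unconditional proof of Proposition \ref{proposition_invariant} is the route you set aside as a shortcut: it follows immediately from Theorems \ref{Theorem_convergence_fixed_time} and \ref{Theorem_multitime_convergence}, since the fixed-time marginals of the prelimit spacings converge to the product Gamma law at every time while the process limit is the unique Markov solution of \eqref{eq_SDE_for_diffs_k} started from that law. So either use that argument, or acknowledge that your PDE computation yields the proposition only modulo the (open) Feller/core statement; the step you describe as ``routine rather than genuinely hard'' is exactly where the difficulty sits.
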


\begin{proposition} \label{proposition_restriction}
Suppose that $R^{(K)}$ is started from the product Gamma distribution of Proposition \ref{proposition_invariant}. Then for any $1\le \ell \le m \le K$ the process $\big(R^{(K)}_\ell(t),\,R^{(K)}_{\ell+1},\,\ldots,\,R^{(K)}_m(t)\big)$, $t\ge 0$ is a weak solution of \eqref{eq_SDE_for_diffs_k} with $k=m-\ell+1$.
\end{proposition}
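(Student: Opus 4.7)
My plan is to reduce this statement to an application of Theorem~\ref{Theorem_multitime_convergence} combined with the level-restriction property of $X(\cdot;N)$ recorded right after Theorem~\ref{Theorem_GS_multilevel}, together with the stationarity supplied by Proposition~\ref{proposition_invariant}. All the dynamic content sits at the prelimit level; what the argument does is identify the law of the sub-vector as a known weak limit.

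\medskip

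First, I would invoke Theorem~\ref{Theorem_multitime_convergence} in dimension $K$: the process $R^{(K)}$ started from the product Gamma law is the weak $\mathcal{C}^K$-limit, as $N\to\infty$, of the edge spacings
\begin{equation*}
\bigl(X^{N+1-i}_{N+1-i}(NT_0+t;N)-X^{N-i}_{N-i}(NT_0+t;N)\bigr)_{1\le i\le K},\quad t\ge 0.
\end{equation*}
Projecting onto coordinates $i=\ell,\ldots,m$ shows that $(R^{(K)}_\ell,\ldots,R^{(K)}_m)$ is the weak $\mathcal{C}^k$-limit of the same spacings restricted to this range, where $k:=m-\ell+1$. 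Next I would use the level-restriction property: setting $N':=N-\ell+1$, the law of $X(\cdot;N)$ restricted to its first $N'$ levels coincides with the law of $X(\cdot;N')$. Since the spacings with $i\in\{\ell,\ldots,m\}$ involve only particles on levels $N-\ell+1,\ldots,N-m$, this sub-vector equals, in law as a process in $t$, the top-$k$ spacings of $X(\cdot;N')$ evaluated at time $NT_0+t=N'T_0+t'$ with $t':=t+(\ell-1)T_0$. Letting $N\to\infty$ (so $N'\to\infty$, since $\ell$ is fixed) and applying Theorem~\ref{Theorem_multitime_convergence} in dimension $k$ to $X(\cdot;N')$, these top-$k$ spacings at time $N'T_0+t'$ converge weakly on $\mathcal{C}^k$ (as a process in $t'\ge 0$) to $R^{(k)}(t')$, started from the product Gamma law.

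\medskip

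To finish, I would undo the time reparametrization: the induced weak limit of the original prelimit process, indexed by $t\ge 0$, is $\bigl(R^{(k)}(t+(\ell-1)T_0)\bigr)_{t\ge 0}$. By Proposition~\ref{proposition_invariant} the product Gamma law is invariant for \eqref{eq_SDE_for_diffs_k}, so $R^{(k)}$ is strictly stationary and shifting its time argument by the constant $(\ell-1)T_0$ leaves its law unchanged. Chaining these identifications yields
\begin{equation*}
\bigl(R^{(K)}_\ell(t),\ldots,R^{(K)}_m(t)\bigr)_{t\ge 0}\stackrel{d}{=}\bigl(R^{(k)}(t)\bigr)_{t\ge 0},
\end{equation*}
which is exactly the claim, since by Theorem~\ref{Theorem_uniqueness} the right-hand side is the unique-in-law weak solution of \eqref{eq_SDE_for_diffs_k} with the stated initial distribution.

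\medskip

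The main obstacle I foresee is the bookkeeping between the two indexings: one must carefully track that rewriting $NT_0+t=N'T_0+t'$ with a fixed additive shift $(\ell-1)T_0$ is compatible with weak convergence on $\mathcal{C}^k$ (i.e.\ uniform convergence on compact $t$-sets), and that Theorem~\ref{Theorem_multitime_convergence} indeed applies verbatim to the process $X(\cdot;N')$ with its own asymptotic time parameter. These points are routine, but without Proposition~\ref{proposition_invariant} the shifted limit would not a priori coincide with the unshifted one, so invariance of the product Gamma measure is the essential ingredient that makes the final identification go through.
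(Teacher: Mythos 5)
Your argument is correct and is essentially the proof the paper intends: Proposition \ref{proposition_restriction} is declared an immediate corollary of Theorems \ref{Theorem_convergence_fixed_time} and \ref{Theorem_multitime_convergence}, and your combination of the level-restriction property of $X(\cdot;N)$, the time identification $NT_0+t=N'T_0+(\ell-1)T_0+t$ with $N'=N-\ell+1$, stationarity from Proposition \ref{proposition_invariant}, and weak uniqueness from Theorem \ref{Theorem_uniqueness} is exactly the fleshed-out version of that deduction. The detailed argument the paper writes out in Section \ref{Section_Limit_properties} (adjoint-generator and Kolmogorov-equation computations) is only an alternative, independent proof conditional on Conjecture \ref{Feller_conj}, so your route is the one that establishes the proposition unconditionally.
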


Both Proposition \ref{proposition_invariant} and \ref{proposition_restriction} are immediate
corollaries of Theorems \ref{Theorem_convergence_fixed_time} and
\ref{Theorem_multitime_convergence}. In this section we explain how these facts can be proved
independently (that is, without using the multilevel Dyson Brownian motions of \cite{GS}) if one
can establish the following conjecture.

\begin{conjecture} \label{Feller_conj}
The Markov semigroup of the process $R^{(k)}$ can be extended to a Feller Markov semigroup acting
on the space of continuous functions on $[0,\infty)^k$ vanishing at infinity, endowed with the
uniform norm.
\end{conjecture}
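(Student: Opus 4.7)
The plan is to transfer the Feller property from a system of Bessel processes to $R^{(k)}$ via the Girsanov machinery developed in Subsection \ref{Section_uniqueness}. Under the change of measure that removes the non-diagonal drifts $-(\beta/2-1)/R_{i+1}^{(k)}$ and the constant drift $-\sqrt{\beta/(2T_0)}$, each coordinate $R_i^{(k)}/\sqrt{2}$ becomes a Bessel process of dimension $\beta/2\ge 2$ driven by $(B_i-B_{i+1})/\sqrt{2}$. Since Bessel processes of dimension at least $2$ are well-defined starting from any point in $[0,\infty)$ (with $0$ serving as an entrance boundary), this construction first extends $R^{(k)}$ to arbitrary starting points $x\in[0,\infty)^k$: solve the transformed SDE, reweight by the Girsanov density up to the stopping times $\tau_\delta$ of \eqref{eq_stop_time_def}, and pass $\delta\to 0$ using Lemma \ref{lemma_no_zero}.

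\medskip

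The first property to verify is that $x\mapsto P_t f(x)=\ev_x[f(R^{(k)}(t))]$ is continuous on $[0,\infty)^k$ for every $f\in C_0([0,\infty)^k)$. Given $x,y\in[0,\infty)^k$ I would couple the two processes $R^{(k),x}$ and $R^{(k),y}$ via common Brownian motions $B_1,\dots,B_{k+1}$. Under the Girsanov transformation above, the corresponding $i$-th coordinates become two Bessel processes of dimension $\beta/2$ driven by the same Brownian motion, started from $x_i/\sqrt{2}$ and $y_i/\sqrt{2}$; classical pathwise uniqueness for Bessel processes then yields continuity of $R^{(k)}(t)$ in the initial condition, including at boundary starting points. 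Pulling this continuity back through the Girsanov density, whose control is precisely as in Subsection \ref{Section_uniqueness}, and then passing $\delta\to 0$ via Lemma \ref{lemma_no_zero} transfers continuity to the original measure.

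\medskip

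For vanishing at infinity, given $f\in C_0([0,\infty)^k)$ and $\eps>0$, choose a compact $K_\eps\subset[0,\infty)^k$ outside which $|f|<\eps$: it suffices to show $\pp_x(R^{(k)}(t)\in K_\eps)$ is small whenever $|x|$ is large. If some coordinate $x_j$ is very large, then under the Girsanov change of measure $R_j^{(k)}(t)/\sqrt{2}$ is a Bessel process of dimension $\beta/2$ started from $x_j/\sqrt{2}$, and standard short-time estimates yield $R_j^{(k)}(t)\ge x_j/2$ with probability tending to $1$ as $x_j\to\infty$. Intersecting with the high-probability event that all other coordinates stay in a prescribed compact subset of $(0,\infty)$ -- so that the Radon-Nikodym density is bounded in $L^p$ -- this transfers back to the original measure. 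Strong continuity $\|P_t f-f\|_\infty\to 0$ as $t\to 0^+$ then follows from pathwise continuity of $R^{(k)}$, uniform continuity of $f$ on a large compact, and a uniform-in-$x$ modulus-of-continuity estimate for $R^{(k)}$ in the spirit of Section \ref{Section_tightness}.

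\medskip

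The main obstacle will be the vanishing at infinity property. One must obtain control on the Girsanov density $\exp\bigl(\int_0^{t\wedge\tau_\delta} b(R^{(k)}(s))\cdot\mathrm{d}B_s-\tfrac{1}{2}\int_0^{t\wedge\tau_\delta}|b(R^{(k)}(s))|^2\,\mathrm{d}s\bigr)$ uniformly in $x$ as $|x|\to\infty$. The drifts removed in the change of measure involve $1/R_{i+1}^{(k)}$ terms that blow up if any coordinate approaches $0$; although each individual coordinate is marginally a Bessel process that stays away from $0$, the joint behavior must be controlled simultaneously. The delicate point is to show that the $L^p$-norm of the density, restricted to the event that no coordinate becomes too small, is bounded uniformly in the initial condition $x$.
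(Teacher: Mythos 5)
The statement you are attempting to prove is labeled a \emph{conjecture} in the paper, and the authors do not prove it. The text following the conjecture explicitly states that while the claim is plausible by analogy with one--dimensional Bessel processes, ``a rigorous proof of the conjecture would require, in particular, a construction of the process $R^{(k)}$ for initial values on the boundary of $[0,\infty)^k$ which appears to be beyond reach at the moment.'' Your proposal does not overcome this obstruction; in fact it runs directly into it.

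The gap is in the very first step, where you claim the Girsanov construction ``extends $R^{(k)}$ to arbitrary starting points $x\in[0,\infty)^k$: solve the transformed SDE, reweight by the Girsanov density up to the stopping times $\tau_\delta$ of \eqref{eq_stop_time_def}, and pass $\delta\to 0$.'' If some coordinate $x_i=0$, then by the definition \eqref{eq_stop_time_def} we have $\tau_\delta[R]=0$ for every $\delta>0$, so the localized change of measure degenerates and yields nothing; Lemma~\ref{lemma_no_zero} also does not apply, since its hypothesis is $R(0)\in(0,\infty)^k$. Worse, the Girsanov density that would have to be controlled involves terms $\int_0^t R_{i+1}(s)^{-2}\,\mathrm{d}s$, and under the transformed measure $R_{i+1}/\sqrt{2}$ is a Bessel process of dimension $\beta/2$. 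Started from $0$, such an integral is almost surely infinite when $\beta/2=2$, i.e.\ at the endpoint $\beta=4$ included in the theorem's range; and even for $\beta>4$, almost--sure finiteness of the quadratic variation is far weaker than the Novikov/Kazamaki condition needed to make the exponential a true martingale. So the measure change simply is not available at the boundary, which is precisely the point the authors flag as open.

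The remaining steps inherit the same difficulty. The coupling argument for continuity of $x\mapsto P_tf(x)$ ignores that the Radon--Nikodym density itself depends on the initial condition and blows up as that initial condition approaches the boundary; pathwise uniqueness for Bessel processes under $\widetilde P$ does not transfer continuity to the original measure without uniform control of that density, which is unavailable. Similarly, the ``uniform--in--$x$ modulus--of--continuity estimate for $R^{(k)}$ in the spirit of Section~\ref{Section_tightness}'' does not exist: the estimates in Section~\ref{Section_tightness} are for the prelimit processes with the specific product--Gamma law, not for $R^{(k)}$ started from an arbitrary $x$, and there is no a priori reason they should be uniform in $x$ up to the boundary. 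As it stands your argument proves the conjecture only for initial conditions in the open orthant $(0,\infty)^k$, where it is essentially a repackaging of the existence/uniqueness argument of Subsection~\ref{Section_uniqueness}, and does not reach the stated Feller extension to all of $[0,\infty)^k$.
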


The conjecture seems to be true if one views the process $R^{(k)}$ as a multidimensional
generalization of a one-dimensional Bessel process whose semigroup is Feller on the space of
continuous functions on $[0,\infty)$ vanishing at infinity, endowed with the uniform norm (see e.g.
\cite[Chapter XI, p. 446]{RY}). However, a rigorous proof of the conjecture would require, in
particular, a construction of the process $R^{(k)}$ for initial values on the boundary of
$[0,\infty)^k$ which appears to be beyond reach at the moment.

\medskip

We now proceed to explain how Conjecture \ref{Feller_conj} can be used to prove Propositions
\ref{proposition_invariant} and \ref{proposition_restriction}. To simplify the notation it will be
convenient to choose $T_0=\beta/2$ here (note that this is different from the choice of $T_0$ in
Section \ref{Section_fixed}). For other values of $T_0$ one can either repeat the same arguments or
simply rescale the space and time coordinates as needed.

\medskip

We start with Proposition \ref{proposition_invariant} and claim first that for all infinitely differentiable functions with compact support in $(0,\infty)^k$ the limit
\eq\label{gen_lim}
\lim_{t\to0} \,\frac{\ev\big[f(R^{(k)}(t))|R^{(k)}(0)=x\big]-f(x)}{t}=:(\mathcal{A}^{(k)} f)(x)
\en
exists uniformly in $x\in[0,\infty)^k$ and is given by
\begin{equation}
\label{eq_generator}
\sum_{i=1}^k \frac{\partial^2 f}{\partial x_i^2}(x) - \sum_{i=1}^{k-1}
  \frac{\partial^2 f}{\partial x_i \partial x_{i+1}}(x) + \sum_{i=1}^{k-1}
  \left(\frac{\beta/2-1}{x_i}-\frac{\beta/2-1}{x_{i+1}}\right)\frac{\partial f}{\partial x_i}(x) +
\left( \frac{\beta/2-1}{x_k}-1 \right) \frac{\partial f}{\partial x_k}(x)
\end{equation}
for all $x\in[0,\infty)^k$. This is a consequence of \cite[Theorem 18.11]{Ka}, since one can replace $R^{(k)}$ in the expectation of \eqref{gen_lim} by a diffusion process with bounded coefficients coinciding with those of $R^{(k)}$ on a neighborhood of the support of $f$ and the resulting error is exponentially small in $t^{-1}$ uniformly in $x$. Indeed, the latter error can be bounded above by the product of the supremum of $|f|$ with the probability that $R^{(k)}$ moves by more than a constant amount within the neighborhood of the support of $f$ in time $t$.

\medskip

From the above we conclude that infinitely differentiable functions with compact support in $(0,\infty)^k$ belong to the domain of the generator of $R^{(k)}$. Moreover, a measure $\nu$ on $[0,\infty)^k$ is invariant for the process $R^{(k)}$ if and only if the expectation of $f(R^{(k)}(t))$ does not depend on $t$ for all such functions, when $R^{(k)}$ is started according
to $\nu$. This is in turn equivalent to (see e.g. \cite[Section 4.9]{EK})
\eq
\int_{[0,\infty)^k} (\mathcal A^{(k)}f)(x)\,\nu(\mathrm{d}x)=0
\en
for all functions $f$ as described. Plugging in the candidate invariant measure from the statement of the proposition and integrating by parts we see that it suffices to show
\begin{equation}\label{eq_invariance}
\big(\mathcal{A}^{(k)}\big)^*\prod_{i=1}^k x_i^{\beta/2-1}\,e^{-x_i}=0,
\end{equation}
where
\eq\label{adjoint_op}
\begin{split}
\big(\mathcal{A}^{(k)}\big)^*:=\sum_{i=1}^k \frac{\partial^2}{\partial x_i^2}
- \sum_{i=1}^{k-1} \frac{\partial^2}{\partial x_i \partial x_{i+1}}
- \sum_{i=1}^{k-1} \left(\frac{\beta/2-1}{x_i}-\frac{\beta/2-1}{x_{i+1}}\right)\frac{\partial}{\partial x_i}
- \left( \frac{\beta/2-1}{x_k}-1 \right)\frac{\partial}{\partial x_k} \\
+ \sum_{i=1}^{k}\frac{\beta/2-1}{x_i^2}
\end{split}
\en
is the adjoint of $\mathcal{A}^{(k)}$. To establish \eqref{eq_invariance} we argue by induction over $k$. For $k=1$ one has
\[
\begin{split}
&\big(\mathcal{A}^{(1)}\big)^*\big(x_1^{\beta/2-1}\,e^{-x_1}\big)
=\left(\frac{\partial^2}{\partial x_1^2}-\left( \frac{\beta/2-1}{x_1}-1\right)\frac{\partial}{\partial x_1}
+\frac{\beta/2-1}{x_1^2} \right)\big(x_1^{\beta/2-1}e^{-x_1}\big) \\
&=x_1^{\beta/2-1}e^{-x_1}\left(\frac{(\beta/2-1)(\beta/2-2)}{x_1^2}+1-\frac{\beta-2}{x_1}+\frac{\beta/2-1}{x_1}-1
-\frac{(\beta/2-1)^2}{x_1^2}+\frac{\beta/2-1}{x_1}+\frac{\beta/2-1}{x_1^2} \right)\\
&=0.
\end{split}
\]
Writing $\big(\widehat {\mathcal{A}}^{(k)}\big)^*$ for the same operator as $\big(\mathcal{A}^{(k)}\big)^*$, but acting on
the variables $x_2,\,x_3,\,\ldots,\,x_{k+1}$, we carry out the induction step as follows:
\begin{eqnarray*}
\big(\mathcal{A}^{(k+1)}\big)^*\prod_{i=1}^{k+1} x_i^{\beta/2-1}e^{-x_i}
&=&\big(\widehat{\mathcal{A}}^{(k)}\big)^*\prod_{i=1}^{k+1} x_i^{\beta/2-1}e^{-x_i}
+\big(\mathcal{A}^{(1)}\big)^*\prod_{i=1}^{k+1} x_i^{\beta/2-1}e^{-x_i} \\
&&+ \left( -
  \frac{\partial^2}{\partial x_1 \partial x_{2}} -
  \left(1-\frac{\beta/2-1}{x_{2}}\right)\frac{\partial }{\partial x_1}
 \right)\prod_{i=1}^{k+1} x_i^{\beta/2-1}e^{-x_i} \\
&=&-\frac{\partial}{\partial x_1} \left(\frac{\partial}{\partial x_2}+1-\frac{\beta/2-1}{x_2}\right)
\prod_{i=1}^{k+1} x_i^{\beta/2-1}e^{-x_i} =0. \qquad\qquad\qquad\quad\qedhere
\end{eqnarray*}

\smallskip

We now turn to Proposition \ref{proposition_restriction}. First, we remark that Conjecture \ref{Feller_conj} implies the following uniqueness result for the evolution equation associated with the semigroup of $R^{(k)}$.

\begin{conjecture}\label{lemma_PDE_uniq}
For every infinitely differentiable function $f$ with compact support in $(0,\infty)^k$ the abstract Cauchy problem
\eq\label{ACP}
\frac{\mathrm{d}}{\mathrm{d}t}\,u(t,\cdot)={\mathcal A}^{(k)}u(t,\cdot),\quad u(0,\cdot)=f,
\en
(with ${\mathcal A}^{(k)}$ given by \eqref{eq_generator}) has a unique classical solution in the sense of \cite[Definition II.6.1]{EN}: the mapping $t\mapsto u(t,\cdot)$ from $[0,\infty)$ into the space of continuous functions on $[0,\infty)^k$ vanishing at infinity (endowed with the uniform norm) is continuously differentiable, $u(t,\cdot)$ is in the domain of ${\mathcal A}^{(k)}$ for all $t\ge0$, and \eqref{ACP} holds. Moreover, the solution is given by the application of the semigroup generated by ${\mathcal A}^{(k)}$ to the function $f$.
\end{conjecture}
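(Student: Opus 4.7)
The plan is to reduce Conjecture \ref{lemma_PDE_uniq} to standard results on abstract Cauchy problems for strongly continuous semigroups (e.g.\ \cite[Section II.6]{EN}). Under Conjecture \ref{Feller_conj}, the semigroup $(T_t)_{t\ge 0}$ of $R^{(k)}$ is a Feller (hence strongly continuous) semigroup on $C_0([0,\infty)^k)$ with some infinitesimal generator $(\mathcal{G}^{(k)}, D(\mathcal{G}^{(k)}))$. The computation leading to \eqref{eq_generator} already shows that $C_c^\infty((0,\infty)^k) \subset D(\mathcal{G}^{(k)})$ and $\mathcal{G}^{(k)} f = \mathcal{A}^{(k)} f$ for such $f$, so one can consistently identify $\mathcal{A}^{(k)}$ with $\mathcal{G}^{(k)}$ on the relevant test functions.

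For existence, given $f \in C_c^\infty((0,\infty)^k)$, I would define $u(t,\cdot) := T_t f$. Because $f\in D(\mathcal{A}^{(k)})$ and the semigroup is strongly continuous, the standard theory yields that $t \mapsto T_t f$ is a $C^1$ map into $C_0([0,\infty)^k)$, that $T_t f \in D(\mathcal{A}^{(k)})$ for all $t\ge 0$, and that $\tfrac{d}{dt} T_t f = \mathcal{A}^{(k)} T_t f = T_t \mathcal{A}^{(k)} f$. This produces a classical solution in the sense of \cite[Definition II.6.1]{EN}.

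For uniqueness, I would use the usual semigroup trick: let $u$ be any classical solution, fix $t>0$, and consider the auxiliary map $v : [0,t] \to C_0([0,\infty)^k)$ given by $v(s) := T_{t-s} u(s,\cdot)$. Using that $u(s,\cdot)\in D(\mathcal{A}^{(k)})$, that the semigroup commutes with its generator on the domain, and the product rule for strongly continuous semigroups, one gets
$$
\frac{d v}{d s}(s) = - T_{t-s} \mathcal{A}^{(k)} u(s,\cdot) + T_{t-s}\,\frac{d}{ds} u(s,\cdot) = 0.
$$
Hence $v(0) = v(t)$, which is exactly $T_t f = u(t,\cdot)$, proving that the classical solution is unique and coincides with the semigroup orbit.

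The main obstacle in this entire program is not really this uniqueness statement, which becomes almost formal once the Feller framework is in place, but rather Conjecture \ref{Feller_conj} itself, whose resolution would require constructing $R^{(k)}$ for initial data on the boundary of $[0,\infty)^k$ (where components vanish and the drifts \eqref{eq_SDE_for_diffs_k} blow up) and establishing joint continuity of $(t,x) \mapsto \mathbb{E}[f(R^{(k)}(t))\mid R^{(k)}(0)=x]$ up to the boundary. A secondary, milder point to verify is that the uniform convergence in \eqref{gen_lim} really identifies $\mathcal{A}^{(k)}$ as the restriction of $\mathcal{G}^{(k)}$ to $C_c^\infty((0,\infty)^k)$, so that the statement of the conjecture (which phrases everything in terms of $\mathcal{A}^{(k)}$) is indeed compatible with the semigroup-theoretic uniqueness just sketched.
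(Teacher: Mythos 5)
Your argument is essentially the paper's: the paper also works conditionally on Conjecture \ref{Feller_conj}, identifies $\mathcal{A}^{(k)}$ with the generator of the Feller semigroup via the computation \eqref{gen_lim}, and then deduces the statement from \cite[Proposition II.6.2]{EN}, whose proof is exactly the existence-plus-$v(s)=T_{t-s}u(s,\cdot)$ argument you spell out. So your proposal is correct (modulo the same unproven Conjecture \ref{Feller_conj} on which the statement itself rests), differing only in that you reprove the Engel--Nagel proposition rather than citing it.
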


Indeed, given Conjecture \ref{Feller_conj} one can rely on \cite[Theorem 17.6]{Ka} to conclude that the operator ${\mathcal A}^{(k)}$ generates a strongly continuous semigroup on the space of continuous functions on $[0,\infty)^k$ vanishing at infinity. Conjecture \ref{lemma_PDE_uniq} then becomes a corollary of \cite[Proposition II.6.2]{EN}.

\medskip

We now return to Proposition \ref{proposition_restriction}. From the fact that $R^{(K)}$ satisfies \eqref{eq_SDE_for_diffs_k} with $k=K$ it is clear that $\big(R^{(K)}_l,\,R^{(K)}_{l+1},\,\ldots,\,R^{(K)}_K\big)$ is a weak solution of \eqref{eq_SDE_for_diffs_k} with $k=K-l+1$. Hence, it suffices to prove that the restriction of $\big(R^{(K)}_l,\,R^{(K)}_{l+1},\,\ldots,\,R^{(K)}_K\big)$ to the first $(m-l+1)$ coordinates is a weak solution of \eqref{eq_SDE_for_diffs_k} with $k=m-l+1$. Since one can remove one coordinate process at a time, the proposition boils down to the statement
\eq\label{consistency2}
\big(R^{(n+1)}_1,R^{(n+1)}_2,\ldots,R^{(n+1)}_n\big)\stackrel{d}{=}\big(R^{(n)}_1,R^{(n)}_2,\ldots,R^{(n)}_n\big),\quad
n\in\nn.
\en

\smallskip

To establish \eqref{consistency2} we fix an $n\in\nn$ and an infinitely differentiable function $f$ with compact support in $(0,\infty)^n$, write $R^{(n+1)}=\big(\widetilde{R},R^{(n+1)}_{n+1}\big)$, and define the function
\eq
u(t,x)=\ev\big[f(\widetilde{R}(t))\big|\widetilde{R}(0)=x\big],\quad (t,x)\in[0,\infty)\times(0,\infty)^n.
\en
We note that
\eq\label{whatisu}
u(t,x)=\int_0^\infty \ev\big[f(\widetilde{R}(t))\big|\widetilde{R}(0)=x,\,R^{(n+1)}_{n+1}(0)=y\big]\,\frac{1}{\Gamma(\beta/2)}\,y^{\beta/2-1}\,e^{-y}\,\mathrm{d}y.
\en
Next, we write $v(t,x,y)$ for the conditional expectation in \eqref{whatisu} and apply \cite[Theorem 17.6]{Ka} to the semigroup of the process $R^{(n+1)}$ to obtain the backward Kolmogorov equation
\eq
\frac{\mathrm{d}}{\mathrm{d}t}\,v(t,\cdot,\cdot)=\mathcal{A}^{(n+1)}_{(x,y)}v(t,\cdot,\cdot)
\en
where $\mathcal{A}^{(n+1)}_{(x,y)}$ is the operator of \eqref{eq_generator}, written in the coordinates $(x,y)$ and the equation should be interpreted in the sense of Conjecture \ref{lemma_PDE_uniq}. This allows us to compute
\begin{eqnarray*}
&&\frac{\mathrm{d}}{\mathrm{d}t}\,u(t,\cdot) = \int_0^\infty \Big(\frac{\mathrm{d}}{\mathrm{d}t}\,v(t,\cdot,\cdot)\Big)
\frac{1}{\Gamma(\beta/2)}\,y^{\beta/2-1}\,e^{-y}\,\mathrm{d}y \\
&&= \int_0^\infty \bigg(\mathcal{A}^{(n)}_x\,v(t,\cdot,\cdot)+\mathcal{A}^{(1)}_y\,v(t,\cdot,\cdot)
+\Big(1-\frac{\beta/2-1}{y}\Big)\,\frac{\partial v(t,\cdot,\cdot)}{\partial x_n}
-\frac{\partial^2 v(t,\cdot,\cdot)}{\partial x_n\partial y}\bigg)
\frac{y^{\beta/2-1}\,e^{-y}}{\Gamma(\beta/2)}\,\mathrm{d}y \\
&&= \int_0^\infty \big(\mathcal{A}^{(n)}_x\,v(t,\cdot,\cdot)\big)
\frac{1}{\Gamma(\beta/2)}\,y^{\beta/2-1}\,e^{-y}\,\mathrm{d}y
=\mathcal{A}^{(n)}_x\,u(t,\cdot).
\end{eqnarray*}
In the third identity we have used integration by parts in the $y$ variable, and the identities
\[
(\mathcal{A}^{(1)}_y)^*\,\Big(y^{\beta/2-1}\,e^{-y}\Big)=0,\quad\mathrm{and}
\quad\frac{\partial}{\partial y}\,\Big(y^{\beta/2-1}\,e^{-y}\Big)
=\Big(\frac{\beta/2-1}{y}-1\Big)\,\big(y^{\beta/2-1}\,e^{-y}\big).
\]
We also remark at this point that no boundary terms appear in that integration by parts thanks to the assumption $\beta\ge4$. At this point, Conjecture \ref{lemma_PDE_uniq} implies
\eq\label{basiceq}
u(t,x)=\ev\big[f(\widetilde{R}(t))\big|\widetilde{R}(0)=x\big]=\ev\big[f(R^{(n)}(t))\big|R^{(n)}(0)=x\big],
\quad (t,x)\in[0,\infty)\times(0,\infty)^n,
\en
Indeed, we have just established that the first conditional expectation is a classical solution of the abstract Cauchy problem in Conjecture \ref{lemma_PDE_uniq}. For the second conditional expectation the same follows from another application of \cite[Theorem 17.6]{Ka}.

\medskip

To obtain \eqref{consistency2} we need to improve \eqref{basiceq} to the following statement (which is the Markov property of $\widetilde{R}$): for any $0\le s<t$, $p\in\nn$ and $0\le t_1<t_2<\cdots<t_p\le s$:
\eq\label{maineq}
\ev\big[f(\widetilde{R}(t))\big|\widetilde{R}(t_1),\widetilde{R}(t_2),\ldots,\widetilde{R}(t_p)\big]=u(t-t_p,\widetilde{R}(t_p)).
\en
Indeed, then we can infer that the conditional distribution of $\widetilde{R}(t)$ given $\widetilde{R}(r)$, $0\le r\le s$ is the same as the conditional distribution of $R^{(n)}(t)$ given $R^{(n)}(r)$, $0\le r\le s$. This is due to the fact that the $\sigma$-algebra on the space of continuous functions $[0,s]\to[0,\infty)^n$ is generated by the finite-dimensional projections of such functions.

\medskip

We show \eqref{maineq} by induction over $p$. For $p=1$, \eqref{maineq} is a direct consequence of \eqref{basiceq} and the time-homogeneity of the processes $R^{(n+1)}$. Moreover, for any $p\ge2$, we can appeal to the tower property of conditional expectation and to the Markov property of $R^{(n+1)}$ to rewrite the left-hand side of \eqref{maineq} as
\begin{eqnarray*}
&&
\ev\big[\ev\big[f(\widetilde{R}(t))\big|R^{(n+1)}(t_1),R^{(n+1)}(t_2),\ldots,R^{(n+1)}(t_p)\big]
\big|\widetilde{R}(t_1),\widetilde{R}(t_2),\ldots,\widetilde{R}(t_p)\big] \\
&&=\ev\big[\ev\big[f(\widetilde{R}(t))\big|R^{(n+1)}(t_p)\big]\big|\widetilde{R}(t_1),\widetilde{R}(t_2),\ldots,\widetilde{R}(t_p)\big] \\
&&=\ev\big[v(t-t_p,\widetilde{R}(t_p),R^{(n+1)}_{n+1}(t_p))\big|\widetilde{R}(t_1),\widetilde{R}(t_2),\ldots,\widetilde{R}(t_p)\big]
\end{eqnarray*}
where $v$ stands for the conditional expectation in \eqref{whatisu} as before. It remains to prove that $R^{(n+1)}_{n+1}(t_p)$ is independent of $\big(\widetilde{R}(t_1),\widetilde{R}(t_2),\ldots,\widetilde{R}(t_p)\big)$, since then
\begin{eqnarray*}
\ev\big[v(t-t_p,\widetilde{R}(t_p),R^{(n+1)}_{n+1}(t_p))\big|\widetilde{R}(t_1),\widetilde{R}(t_2),\ldots,\widetilde{R}(t_p)\big]
=\int_0^\infty v(t-t_p,\widetilde{R}(t_p),y)\,\frac{1}{\Gamma(\beta/2)}\,y^{\beta/2-1}\,e^{-y}\,\mathrm{d}y \\
=u(t-t_p,\widetilde{R}(t_p)).
\end{eqnarray*}
%Misha: the connection to Burke's Theorem is a very long shot. Burke's Theorem asserts the independence of the past of one process from the future of another process. Of course, we can define the future appropriately here, but it does not seem to be a clean connection. This independence is really a property of intertwining, but it does not seem to make sense to define intertwinings and discuss it here (too far from the topic).

\smallskip

To show the independence assertion we argue again by induction over $p\ge2$. For $p=2$ the assertion is equivalent to the statement
\eq\label{indep}
\ev\big[g(\widetilde{R}(t_2))\,h(R^{(n+1)}_{n+1}(t_2))\big|\widetilde{R}(t_1)=r_1\big]
=\ev\big[g(\widetilde{R}(t_2))\big|\widetilde{R}(t_1)=r_1\big] \,\int_0^\infty
h(y)\,\frac{1}{\Gamma(\beta/2)}\,y^{\beta/2-1}\,e^{-y}\,\mathrm{d}y
\en
for all infinitely differentiable functions $g$, $h$ with compact supports in $(0,\infty)^n$, $(0,\infty)$, respectively, and any $r_1\in(0,\infty)^n$. Now, note that the left-hand side of \eqref{indep} can be written as
\begin{eqnarray*}
\int_0^\infty
\ev\big[f(\widetilde{R}(t_2))\,g(R^{(n+1)}_{n+1}(t_2))\big|R^{(n+1)}(t_1)=(r_1,y)\big]\,
\frac{1}{\Gamma(\beta/2)}\,y^{\beta/2-1}\,e^{-y}\,\mathrm{d}y \\
=:\int_0^\infty w(t_2-t_1,r_1,y)\,
\frac{1}{\Gamma(\beta/2)}\,y^{\beta/2-1}\,e^{-y}\,\mathrm{d}y
\end{eqnarray*}
where the hereby defined function $w$ is a classical solution of the abstract Cauchy problem
\eq
\frac{\mathrm{d}}{\mathrm{d}t}\,w(t,\cdot,\cdot)=\mathcal{A}^{(n+1)}_{(r_1,y)}\,w(t,\cdot,\cdot),\qquad w(0,r_1,y)=f(r_1)\,g(y)
\en
in the sense of Conjecture \ref{lemma_PDE_uniq}. Now, the same argument as the one following \eqref{whatisu} reveals that both sides of \eqref{indep} are classical solutions of the abstract Cauchy problem
\eq
\frac{\mathrm{d}}{\mathrm{d}t}\,\widetilde{u}(t,\cdot)=\mathcal{A}^{(n)}_{r_1}\,\widetilde{u}(t,\cdot),
\qquad \widetilde{u}(0,r_1)=g(r_1)\,
\int_0^\infty h(y)\,\frac{1}{\Gamma(\beta/2)}\,y^{\beta/2-1}\,e^{-y}\,\mathrm{d}y
\en
in the sense of Conjecture \ref{lemma_PDE_uniq}. Thus, \eqref{indep} is a consequence of Conjecture \ref{lemma_PDE_uniq}.

\medskip

For the induction step we pick functions $g$, $h$ as before and vectors $r_1,r_2,\ldots,r_{p-1}\in(0,\infty)^n$, and apply the induction hypothesis, the Markov property of $R^{(n+1)}$ and the statement in the $p=2$ case subsequently to compute
\begin{eqnarray*}
&&\ev\big[g(\widetilde{R}(t_p))\,h(R^{(n+1)}_{n+1}(t_p))\big|\widetilde{R}(t_1)=r_1,\ldots,\widetilde{R}(t_{p-1})=r_{p-1}\big] \\
&=&\int_0^\infty
\ev\big[g(\widetilde{R}(t_p))\,h(R^{(n+1)}_{n+1}(t_p))\big|\widetilde{R}(t_1)=r_1,\ldots,
\widetilde{R}(t_{p-1})=r_{p-1},R^{(n+1)}_{n+1}(t_{p-1})=y\big]
\frac{y^{\beta/2-1}}{\Gamma(\beta/2)} e^{-y}\mathrm{d}y \\
&=&\int_0^\infty \ev\big[g(\widetilde{R}(t_p))\,h(R^{(n+1)}_{n+1}(t_p))\big|
\widetilde{R}(t_{p-1})=r_{p-1},R^{(n+1)}_{n+1}(t_{p-1})=y\big]
\,\frac{y^{\beta/2-1}}{\Gamma(\beta/2)}\,e^{-y}\,\mathrm{d}y \\
&=&\ev\big[g(\widetilde{R}(t_p))\big|\widetilde{R}(t_{p-1})=r_{p-1}\big]\,\int_0^\infty h(y)\,
\frac{y^{\beta/2-1}}{\Gamma(\beta/2)}\,e^{-y}\,\mathrm{d}y.
\end{eqnarray*}
Carrying out the same computation backward with $h\equiv1$ we see that the latter expression is
given by
\[
\ev\big[g(\widetilde{R}(t_p))\big|\widetilde{R}(t_1)=r_1,\ldots,\widetilde{R}(t_{p-1})=r_{p-1}\big]\,
\int_0^\infty h(y)\, \frac{y^{\beta/2-1}}{\Gamma(\beta/2)}\,e^{-y}\,\mathrm{d}y.
\]
It follows that $R^{(n+1)}_{n+1}(t_p)$ is independent of $\big(\widetilde{R}(t_1),\,\widetilde{R}(t_2),\,\ldots,\,\widetilde{R}(t_p)\big)$ as desired.

\bigskip\bigskip

%%%%%%%%%%%%%%%%%%%%%%%%%%%%%%
%%%%%%%%%%%%%%%%%%%%%%%%%%%%%%

\quad\\


\begin{thebibliography}{BFPS}


\bibitem[AGZ]{AGZ} G.~W.~Anderson, A.~Guionnet, O.~Zeitouni, \emph{An Introduction to Random Matrices},
Cambridge University Press, 2010.

\bibitem[BR]{BR} J.~Baik, E.~M.~Rains, Symmetrized random permutations, Random
Matrix Models and Their Applications, vol.\ 40, Cambridge University Press, 2001, pp. 1--19,
arXiv:math/9910019

\bibitem[Ba]{Bar} Y.~Baryshnikov, GUEs and queues, Probability Theory and Related Fields, 119, no.\ 2 (2001),
256--274.

\bibitem[BFG]{BFG} F.~Bekerman, A.~Figalli, A.~Guionnet, Transport maps for $\beta$--matrix models
and universality,  arXiv:1311.2315.


\bibitem[Bi]{Bi} P.~Billingsley, \emph{Convergence of Probability Measures}, John Wiley \& Sons, 1999.

\bibitem[BC]{BigMac} A.~Borodin, I.~Corwin, Macdonald processes, Probability Theory and Related Fields, 158 (2014)
225--400, arXiv:1111.4408


\bibitem[BFPS]{BFPS} A.~Borodin, P.~L.~Ferrari, M.~Pr\"{a}hofer,
T.~Sasamoto, Fluctuation properties of the TASEP with periodic initial configuration, Journal of
Statistical Physics,  129, no.\ 5-6 (2007), 1055--1080. arXiv:math-ph/0608056.


\bibitem[BF]{BF} A.~Borodin, P.~Ferrari, Anisotropic growth of random
surfaces in 2 + 1 dimensions,  Communications in Mathematical Physics, 325, no.\  2 (2014),
603--684. arXiv:0804.3035.

\bibitem[BG]{BG_lect} A.~Borodin, V.~Gorin. Lectures on integrable probability.
arXiv:1212.3351.

\bibitem[BP]{BP-RSK} A.~Borodin, L.~Petrov, Nearest neighbor Markov dynamics on Macdonald
processes,  arXiv:1305.5501.

\bibitem[BP]{BP_lect} A.~Borodin, L.~Petrov, Integrable probability: from representation thery
to Macdonald processes, Probability Surveys, 11 (2014), 1--58. arXiv:1310.8007.


\bibitem[BEY1]{BEY_bulk} P.~Bourgade, L.~Erd\"os, H.-T.~Yau. Universality of general $\beta$-ensembles,
 to appear in Duke Mathematical Journal, arXiv:1104.2272

\bibitem[BEY2]{BEY} P.~Bourgade, L.~Erd\"os, H.-T.~Yau. Edge Universality of Beta Ensembles. arXiv:1306.5728

\bibitem[CH]{CH} I.~Corwin, A.~Hammond, Brownian Gibbs property for Airy line ensembles, Inventiones
mathematicae, 195, no.\ 2 (2014),  441--508.  arXiv:1108.2291

\bibitem[DF]{DF} P.~Diaconis, J.A.~Fill. Strong stationary times via a new form of duality, Annals of Probability, 18 (1990), 1483--1522.

\bibitem[Du]{Du} R.~M.~Dudley. \textit{Uniform Central Limit Theorems}. Cambridge University Press, 1999.

\bibitem[Dum]{Dum} I.~Dumitriu. \textit{Eigenvalue Statistics for Beta-Ensembles}. PhD thesis. Available at \textit{http://www.math.washington.edu/~dumitriu/main.pdf}.

\bibitem[EN]{EN} K.-J.~Engel, R.~Nagel. \textit{One-parameter semigroups for linear evolution equations}. Springer, New York, 2000.

\bibitem[EK]{EK} S.~N.~Ethier, T.~G.~Kurtz. \textit{Markov processes: characterization and convergence}. Wiley, New York, 1986.

\bibitem[Fe]{Ferrari} P.~L.~Ferrari, Why random matrices share universal processes with interacting particle systems?  arXiv:1312.1126.

\bibitem[FSW]{FSW} P.~L.~Ferrari, H.~Spohn, T.~Weiss, Scaling Limit for Brownian Motions with One-sided Collisions. arXiv: 1306.5095

\bibitem[FN]{FN} P.~J.~Forrester, T.~Nagao, Determinantal correlations for classical projection
processes, Journal of Statistical Mechanics: Theory and Experiment, 2011,  P08011. arXiv:0801.0100

\bibitem[F1]{F1} P.~J.~Forrester, The spectrum edge of random matrix ensembles, Nuclear Physics B, 402  (1993) 709--728

\bibitem[F2]{For} P.~J.~Forrester, \emph{Log-Gases and Random Matrices}, Princeton University Press, 2010.

\bibitem[Fr]{Fr} A.~Friedman, \emph{Partial Differential Equations of Parabolic Type}, Prentice-Hall, 1964.

\bibitem[GW]{GW} P.~W.~Glynn, W.~Whitt,  Departures from many queues in series, Annals of Applied
Probability, 1, no.\ 4 (1991), 546--572.

\bibitem[GS1]{GS-TASEP} V.~Gorin, M.~Shkolnikov. Limits of multilevel TASEP and similar processes,
to appear in Annales de l'Institut Henri Poincare, arXiv:1206.3817.

\bibitem[GS2]{GS} V.~Gorin, M.~Shkolnikov. Multilevel Dyson Brownian motions via Jack polynomials,
to appear in Probability Theory and Related Fields, arxiv:1401.5595.

\bibitem[GTW]{GTW} J.~Gravner, C.~Tracy, H.~Widom, Limit Theorems for Height Fluctuations
in a Class of Discrete Space and Time Growth Models, Journal of Statistical Physics, 102, no.\ 5-6
(2001),  1085--132, arXiv:math/0005133

%\bibitem[GR]{GR} I.~S.~Gradshteyn, I.~M.~Ryzhik. \textit{Table of integrals, series and products}, 7th ed. Elsevier/Academic Press, Amsterdam, 2007.

\bibitem[J1]{J-CLT} K.~Johansson, On fluctuation of eigenvalues of random Hermitian matrices, Duke Mathematical Journal, 91 (1998), 151--204.

\bibitem[J2]{J-TASEP} K.~Johansson, Shape Fluctuations and Random Matrices, Communications in Mathematical Physics, 209 (2000),
437--476. arXiv:math/9903134.

\bibitem[Ka]{Ka} O.~Kallenberg. \textit{Foundations of Modern Probability}, 2nd ed. Springer, 2002.

\bibitem[KS]{KS} I.~Karatzas, S.~Shreve. \textit{Brownian motion and stochastic calculus}, 2nd ed. Springer, 1991.

\bibitem[KNT]{KNT} M.~Katori, T.~Nagao, H.~Tanemura,  Infinite systems of non-colliding Brownian
particles,  Adv. Stud. in Pure Math. 39, ``Stochastic Analysis on Large Scale Interacting
Systems'', pp.\ 283--306, (Mathematical Society of Japan, Tokyo, 2004); arXiv:math.PR/0301143

\bibitem[KRV]{KRV} M.~Krishnapur, B.~Rider, and B.~Virag. Universality of the stochastic airy operator.
arXiv:1306.4832

\bibitem[Ki]{Kipnis} C.~Kipnis,  Central limit theorems for infinite series of queues and applications to simple exclusion.
Annals of Probability, 14, no.\ (1986), 397--408.

\bibitem[LSU]{LSU} Ladyzhenskaja, O. A., Solonnikov, V. A. and Ural'ceva, N. N. (1988). \textit{Linear and quasilinear equations of parabolic type}. Translations of Mathematical Monographs, 23. American Mathematical Society.

\bibitem[LR]{LR} M.~Ledoux, B.~Rider. Small deviations for beta ensembles. Electronic Journal of Probability, 15 (2010), Paper no.\ 41, pp.\
1319--1343. arXiv:0912.5040

\bibitem[Li]{Liggett} T.~Liggett, \emph{Interacting Particle Systems}, Springer-Verlag, New York, 1985.

\bibitem[Me]{Mehta}  M.~L.~Mehta, \emph{Random Matrices} (3rd ed.),  Amsterdam: Elsevier/Academic Press, 2004.

\bibitem[Ne]{Neretin} Yu.~A.~Neretin, Rayleigh triangles and non-matrix interpolation of matrix beta
integrals, Sbornik: Mathematics (2003), 194(4), 515--540.

\bibitem[No]{Nordenstam} E.~Nordenstam, On the Shuffling Algorithm for Domino Tilings, Electonic Journal of Probability,
 15 (2010), Paper no. 3, 75–95.  arXiv:0802.2592.

\bibitem[O1]{OC1} N.~O'Connell, A path-transformation for random walks and
the Robinson-Schensted correspondence, Transactions of American Mathematical Socicety, 355 (2003)
3669--3697.

\bibitem[O2]{OC2} N.~O'Connell, Conditioned random walks and the RSK correspondence, Journal of Physics A: Math. Gen. 36 (2003)
3049--3066.

\bibitem[OO]{OO} N.~O'Connell, J.~Ortmann. Product-form invariant measures for Brownian motion with drift satisfying a skew-symmetry type condition.
ALEA,  Latin American Journal of Probability and Mathematical Statistics  11, no.\ 1 (2014),
307--329, arXiv:1201.5586.

\bibitem[OT]{OT} H.~Osada, H.~Tanemura, Infinite-dimensional stochastic differential equations
arising from Airy random point fields, arXiv:1408.0632

\bibitem[PaSh]{PS} S.~Pal, M.~Shkolnikov. Intertwining diffusions and wave equations. arXiv:1306.085.

\bibitem[PrSp]{PrahSpohn} M.~Prahofer, H.~Spohn, Universal Distributions for Growth Processes in 1+1 Dimensions
and Random Matrices, Physical Review Letters 84, 4882 (2000), arXiv:cond-mat/9912264.

\bibitem[RY]{RY} D.~Revuz, M.~Yor. \textit{Continuous martingales and Brownian motion}, 3rd ed. Springer, New York, 1999.

\bibitem[RRV]{RRV}  J.~Ramirez, B.~Rider, B.~Virag,
Beta ensembles, stochastic Airy spectrum, and a diffusion, Journal of American Mathematical
Society, 24 (2011), no.\ 4, 919--944, arXiv:math/0607331

\bibitem[Ro]{Rost} H.~Rost, Nonequilibrium behaviour of a many particle process: density profile
 and local equilibria. Zeitschrift fur Wahrscheinlichkeitstheorie und Verwandte Gebiete, 58, no.\ 1 (1981), 41--53.

\bibitem[Ss]{S-TASEP} T.~Sasamoto, Spatial correlations of the $1D$ KPZ surface on a flat substrate, Journal of Physics A, 38
(2005), 549--556.  arXiv:cond-mat/0504417.

\bibitem[Sh]{Shch} M.~Shcherbina, Change of variables as a method to study general $\beta$-models: bulk
universality,  arXiv:1310.7835.

\bibitem[So]{Sodin} S.~Sodin, A limit theorem at the spectral edge for corners of time-dependent Wigner
matrices,  arXiv:1312.1007.

\bibitem[Sp]{Spitzer} F.~Spitzer, Interaction of Markov processes, Advances in Mathematics, 5, no.\ 2 (1970),
246--290.

\bibitem[SV]{SV} D.~W.~Stroock, S.~R.~S.~Varadhan. \textit{Multidimensional Diffusion Processes}. Springer, New York, 1979.

\bibitem[TW]{TW-U}  C.~Tracy, H.~Widom, Level-spacing distributions and the Airy kernel,
Communications in Mathematical Physics, 159, no.\ 1 (1994), 151--174.


\bibitem[W]{Warren} J.~Warren, Dyson's Brownian motions, intertwining and interlacing, Electonic Journal of Probability, 12 (2007),
Paper no.\ 19, 573--590. arXiv:math/0509720

\end{thebibliography}
\end{document}